\numberwithin{equation}{section}
\let\cal\mathcal
\def\Cscr{{\cal C}}
\def\Dscr{{\cal D}}
\def\Escr{{\cal E}}
\def\Fscr{{\cal F}}
\def\Oscr{{\cal O}}
\let\blb\mathbb
\def \PP{{\blb P}}
\def \ZZ{{\blb Z}}
\def\Mod{\operatorname{Mod}}
\def\mod{\operatorname{mod}}
\def\qgr{\operatorname{qgr}}
\def\gr{\operatorname{gr}}
\def\gr{\operatorname {gr}}
\def\Ext{\operatorname {Ext}}
\def\Hom{\operatorname {Hom}}
\def\im{\operatorname {im}}
\def\coker{\operatorname {coker}}
\def\ker{\operatorname {ker}}
\def\Tor{\operatorname {Tor}}
\def\r{\rightarrow}
\DeclareMathOperator{\Pro}{Pro}
\DeclareMathOperator{\Ind}{Ind}
\let\dirlim\injlim
\let\invlim\projlim
\newtheorem{lemma}{Lemma}[section]
\newtheorem{proposition}[lemma]{Proposition}
\newtheorem{theorem}[lemma]{Theorem}
\newtheorem{lemmas}{Lemma}[subsection]
\newtheorem{propositions}[lemmas]{Proposition}
\newtheorem{theorems}[lemmas]{Theorem}
\theoremstyle{definition}
\newtheorem{definition}[lemma]{Definition}
\newtheorem{definitions}[lemmas]{Definition}
\newtheorem{step}{Step}
\theoremstyle{remark}
\newdimen\uboxsep \uboxsep=1ex
\def\uboxn#1{\vtop to 0pt{\hrule height 0pt depth 0pt\vskip\uboxsep
\hbox to 0pt{\hss #1\hss}\vss}}
\def\uboxs#1{\vbox to 0pt{\vss\hbox to 0pt{\hss #1\hss}
\vskip\uboxsep\hrule height 0pt depth 0pt}}
\title{Notes on formal deformations of abelian categories}
\author{Michel Van den Bergh}
\address[Michel Van den Bergh]{Departement WNI, Hasselt University, Agoralaan, 3590 Diepenbeek, Belgium}
\email{michel.vandenbergh@uhasselt.be} 
\thanks{The author is a director of research at the FWO}
\def\Pro{\operatorname{Pro}}
\keywords{Deformation theory, abelian categories}
\subjclass{Primary 13D10, 14A22, 18E1} 
\begin{document}
\begin{abstract} In these notes we provide the foundation for the deformation
theoretic parts of arXiv:0807.375  and arXiv:math/0102005. 
\end{abstract}
\maketitle
\section{Introduction}
In these notes we provide the foundation for the deformation theoretic
parts of \cite{VdB28,VdB26}.  In \cite{VdB26} we construct
non-commutative analogues of quadrics and in \cite{VdB28} we define
non-commutative $\PP^1$-bundles over commutative varieties. A notable
special case of the latter are non-commutative analogues of Hirzebruch
surfaces.

Indeed \cite{VdB28} contains a proof that any formal deformation of a Hirzebruch
surface (in a suitable sense) is given by a non-commutative
Hirzebruch surface. Similarly the original (privately circulated) version of
\cite{VdB26} contains a proof that any formal deformation of a quadric is
a non-commutative quadric (see \cite[\S
11.2]{VdBSt} for a sketch). I deleted this proof when I
first put the paper on the arXiv (8 years after it was written) since I was
unhappy with the deformation theoretic setup that was used.

Meanwhile a satisfactory infinitesimal deformation theory for abelian
categories has been developed in \cite{lowenvdb2,lowenvdb1}. 
In
the noetherian setting (which is sufficient for the applications we
have in mind) the passage from the infinitesimal context to the
formal context is an application of Jouanolou's results
in \cite{Joua}. Nonetheless Jounalou's expos\'e is written for a different
purpose so some translation
is necessary. After several (not very satisfactory) attempts to
rewrite the deformation theoretic parts of \cite{VdB28,VdB26} using
Jouanolou's language of ``AR-J-adic systems'' I decided that it was better to
write a self contained paper on formal deformations of noetherian abelian
categories, which resulted
in the current paper.
On the purely mathematical level there is very little originality in
what we will do.  Besides Jouanolou's expos\'e we have also borrowed
from \cite{AZ2} (which basically discusses trivial deformations) and
\cite[\S5]{EGA31} (which discusses formal schemes).  On the expository
level we deviate from the aforementioned references by systematically
using Pro-objects instead of adic objects. Pro-objects form a
co-Grothen\-dieck category so in particular they have very well
behaved inverse limits.

\medskip

We now give a more detailed exposition of our setup. Let $R$ be a
commutative noetherian ring and let $J$ be an ideal in $R$. 
\begin{definition}
  Let $\Cscr$ be a noetherian $R$-linear abelian category.  The \emph{completion}
  $\widehat{\Cscr}$ of $\Cscr$ is the full subcategory of
  $\Pro(\Cscr)$ consisting of the pro-objects $M$ over $\Cscr$ such that $M/MJ^n\in
  \Cscr$ for all $n$ and such that the canonical map $M\r \invlim_n
  M/MJ^n$ is an isomorphism.
\end{definition}
We reproduce Jouanolou's proof
(recast in our language) that $\widehat{\Cscr}$ is a noetherian
abelian category (see Proposition \ref{ref-2.2.4-6} below).

There is an obvious exact functor $\Phi:\Cscr\r\widehat{\Cscr}:M\mapsto \invlim_n M/MJ^n$ and we say that
$\Cscr$ is \emph{complete} if this functor is an equivalence. 
Roughly speaking an \emph{$(R,J)$-deformation} of an $R/J$-linear abelian
category $\Escr$ will be a complete $R$-linear category $\Fscr$
together with an equivalence $\Escr\cong\Fscr_{R/J}$ where
$\Fscr_{R/J}$ is the full subcategory of $\Fscr$ consisting of objects
annihilated by $J$. To make this definition work one has to impose
certain flatness conditions. See \S\ref{ref-2.1-1} and \S\ref{ref-3-35} for
more details.

\medskip

So to understand formal deformations of abelian categories we
have to understand completion. We first note that
completion extends to functors. We show that if $(T^i)_i:\Cscr\r
\Dscr$ is $\partial$-functor between noetherian abelian $R$-linear categories
then this functor extends to a $\partial$-functor
$(\hat{T}^i)_i:\widehat{\Cscr}\r \widehat{\Dscr}$ (see Theorem
\ref{ref-2.3.1-15}). This is a slight improvement over \cite{Joua} as
Jouanolou imposes some extra conditions on $T$ which seem to be
superfluous.

\medskip

We use the good behaviour of $\partial$-functors to study
$\Ext$-groups. Let $\Cscr_t$ be the full subcategory of $\Cscr$
consisting of the objects which are annihilated by some power
of~$J$. Then we define the \emph{completed $\Ext$-groups} between objects
$M,N\in \widehat{\Cscr}$ as follows
\begin{equation}
\label{new-1}
`\Ext^i_{\widehat{\Cscr}}(M,N)=\Ext^i_{\Pro(\Cscr_t)}(M,N)
\end{equation}
An alternative point of view to this definition is that we consider
the full subcategory $D_c(\Cscr)$ of $D(\Pro(\Cscr_t))$ of complexes whose
cohomology lies in $\widehat{\Cscr}$. Thus $D_c(\Cscr)$ has a $t$-structure whose heart is
$\widehat{\Cscr}$. 
Then the completed $\Ext$-groups for $M,N\in \widehat{\Cscr}$ may be reinterpreted as
\[
{}`\Ext_{\widehat{\Cscr}}(M,N)=\Hom_{D_c(\Cscr)}(M,N[n])
\]
In the case that $\Cscr$ is the category of torsion $l$-adic constructible
sheaves it would be interesting to compare this derived category
to the standard derived category of $l$-adic sheaves \cite{Behrend1,BBD,Ek}.

\medskip

Obviously $`\Ext_{\widehat{\Cscr}}(-,-)$ is a $\partial$-functor in both arguments
but
apart from this we don't have anything to say about it. However in the
event that $\Cscr_{R/J}$ (the objects in $\Cscr$ annihilated by $J$)
has finitely generated $\Ext$-groups over $R/J$ and $\Cscr$ is ``formally flat'' (see
\S\ref{ref-2.4-18}) then we have the expected formula
\[
`\Ext^i_{\widehat{\Cscr}}(M,N)=\invlim_k \dirlim_l \Ext_{\Cscr_{R/J^l}}^i(M/MJ^l,N/NJ^k)
\]
An important theorem in algebraic geometry is Grothendieck's existence
theorem~\cite{EGA31}.  This theorem extends to the current setting (see also
\cite{AZ2}). In \S\ref{ref-4-36} we introduce the notion of a \emph{strongly
  ample sequence}.\footnote{We could have used ample
  sequences \cite{Polishchuk1} but to have good behaviour of higher
  $\Ext$-groups it is convenient to use a slightly stronger notion.}
By definition a sequence $(O(n))_{n\in\ZZ}$ of objects in a noetherian abelian
category $\Escr$ is strongly ample if the following conditions hold
\begin{itemize}
\item[(A1)] For all  $M\in \Escr$ and for all $n$
there is an epimorphism $\oplus_{i=1}^t O(-n_i)\r M$ with $n_i\ge n$.
\item[(A2)] For all $M\in \Escr$ and for all $i>0$ one has
  $\Ext^i_\Escr(O(-n),M)=0$ for $n\gg 0$.
\end{itemize}
A strongly ample sequence $(O(n))_{n\in \ZZ}$ in $\Escr$ is ample in
the sense of \cite{Polishchuk1}. Hence using the methods of \cite{AZ}
or \cite{Polishchuk1} one obtains $ \Escr\cong \gr(A)/{\mathrm{f.l.}} $ if $\Escr$ is
$\Hom$-finite, where $A$ is the noetherian $\ZZ$-algebra $\oplus_{ij}
\Hom_\Escr(O(-j),O(-i))$.

The following is our version of Grothendieck's existence theorem.
\begin{proposition} (see Proposition \ref{ref-4.1-37}) Assume that $R$ is
  $J$-adically complete.  Let $\Escr$ be an $\Ext$-finite $R$-linear
  noetherian category with a strongly ample sequence $(O(n))_n$. Then
  $\Escr$ is complete and furthermore if $\Escr$ is flat (see \S\ref{ref-2.1-1}) we have for
  $M,N\in\Escr$:
\begin{equation}
\label{ref-1.1-0}
\Ext^i_\Escr(M,N)=`\Ext^i_\Escr(M,N)
\end{equation}
\end{proposition}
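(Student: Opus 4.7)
The plan is to use the strongly ample sequence to reduce both assertions to $J$-adic statements about the finitely generated $R$-modules $\Hom_\Escr(O(-j),O(-i))$, then invoke $J$-adic completeness of $R$.

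\textbf{Step 1 (Completeness).} To show $\Phi:\Escr\to \widehat{\Escr}$ is an equivalence, I would construct a quasi-inverse by lifting pro-objects through presentations coming from the strongly ample sequence. Given $M=(M_n=M/MJ^n)_n\in \widehat{\Escr}$, apply (A1) to $M_1$ to obtain a surjection $\pi_1: P:=\bigoplus_{i=1}^t O(-a_i)\twoheadrightarrow M_1$, with the $a_i$ chosen large enough that $\Ext^1_\Escr(O(-a_i),-)$ vanishes uniformly on all the successive quotients $M_{n+1}J^n/M_{n+1}J^{n+1}$ (possible by (A2) applied directly in $\Escr$, before quotienting). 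A standard step-by-step lifting then produces compatible surjections $\pi_n:P/PJ^n\twoheadrightarrow M_n$ extending $\pi_1$; since $R$ is $J$-adically complete and each $\Hom_\Escr(P,O(-a_i))$ is a finitely generated $R$-module, the inverse limit of the $\pi_n$ defines a morphism in $\Escr$ out of $P$. Applying the same procedure to the compatible kernels $\ker\pi_n$, which again form an object of $\widehat{\Escr}$, yields a two-term presentation $P_1\to P_0\to \tilde M\to 0$ in $\Escr$ with $\Phi(\tilde M)\cong M$. Fully faithfulness of $\Phi$ reduces to $\Hom_\Escr(\tilde M,\tilde N)=\invlim_n \Hom_\Escr(\tilde M,\tilde N/\tilde NJ^n)$, which follows from Ext-finiteness together with $J$-adic completeness of $R$.

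\textbf{Step 2 (Ext equality).} Using the formula recalled in the introduction,
\[
{}`\Ext^i_\Escr(M,N) = \invlim_k \dirlim_l \Ext^i_{\Escr_{R/J^l}}(M/MJ^l, N/NJ^k),
\]
I would construct a resolution $P_\bullet\to M$ with each $P_j$ a finite direct sum of shifts $O(-n)$ with $n$ large enough that $\Ext^{>0}_\Escr(P_j,N)=0$ and, using flatness, also $\Ext^{>0}_{\Escr_{R/J^l}}(P_j/P_jJ^l,N/NJ^k)=0$ for all $k,l$. Flatness further guarantees that $P_\bullet/P_\bullet J^l\to M/MJ^l$ remains a resolution. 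Both sides of the desired equality are then computable from the complex $\Hom_\Escr(P_\bullet,N)$, whose terms are finitely generated $R$-modules, hence $J$-adically complete. The identification $\Hom_{\Escr_{R/J^l}}(P_j/P_jJ^l,N/NJ^k)=\Hom_\Escr(P_j,N/NJ^k)/J^l\Hom_\Escr(P_j,N/NJ^k)$, followed by $\invlim_k\dirlim_l$, returns $\Hom_\Escr(P_j,N)$ on each term, and the equality on cohomology follows.

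The main obstacle I anticipate is Step 1: verifying that the step-by-step liftings can be chosen compatibly and that the resulting inverse limit lives in $\Escr$ rather than merely in $\Pro(\Escr)$. The key input that makes this go through is that (A1) and (A2) hold in $\Escr$ itself (not only asymptotically in the quotients $\Escr_{R/J^l}$), allowing uniform choices of generating degrees; combined with Ext-finiteness and $J$-adic completeness of $R$, this lets the limit procedure preserve both finite generation and the $J$-adic filtration.
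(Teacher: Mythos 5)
Your overall strategy---use the strongly ample sequence to reduce everything to finitely generated modules over the complete ring $R$---is the right one, and your construction of a presentation $P_1\to P_0\to\tilde M\to 0$ lifting a pro-object is a legitimate way to get essential surjectivity of $\Phi$. However, there is a genuine gap at the point you lean on hardest. Condition (A2) is a statement about a \emph{single} object: for each fixed $M$ the vanishing $\Ext^i_\Escr(O(-n),M)=0$ holds for $n\gg 0$, with a bound depending on $M$. At several places you need this vanishing \emph{uniformly} over an infinite family of objects: over all successive quotients $M_{n+1}J^n/M_{n+1}J^{n+1}$ in your Step 1, and over all $N/NJ^k$ (equivalently all $NJ^k/NJ^{k+1}$) in your Step 2. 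Your parenthetical ``possible by (A2) applied directly in $\Escr$, before quotienting'' does not produce this uniformity, and without it neither the step-by-step lifting of $\pi_1$ nor the choice of the resolution $P_\bullet$ can be carried out. This uniform statement is precisely the technical heart of the paper's proof (Step 2 there, equations \eqref{ref-4.2-41} and \eqref{ref-4.3-42}): one forms the Rees algebra $\tilde R=\bigoplus_n J^n$, observes via Lemma \ref{ref-2.2.2-3} that $\tilde M=\bigoplus_n MJ^n$ is a noetherian graded $(\tilde R,\Escr)$-object, and runs an induction on the number of generators of $J$ as in Lemma \ref{ref-2.3.2-17}. You need to supply this (or an equivalent) argument.

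A secondary under-justified point: you dispose of full faithfulness of $\Phi$ by saying it ``follows from Ext-finiteness together with $J$-adic completeness of $R$.'' Injectivity of $\Hom_\Escr(M,N)\to\invlim_n\Hom_\Escr(M,N/NJ^n)$ requires a Krull-intersection type fact in $\Escr$: if $\im f\subset NJ^n$ for all $n$, then $\im f=(\im f)J$ by Artin--Rees, and one needs a Nakayama lemma in $\Escr$ (which the paper establishes first, again via the ample sequence) to conclude $f=0$. Surjectivity, even for $M=O(-m)$, uses not only completeness of the finitely generated module $\Hom_\Escr(O(-m),N)$ but also the identity $\Hom_\Escr(O(-m),NJ^n)=\Hom_\Escr(O(-m),N)J^n$ for all $n$ --- once more the uniform statement above. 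Finally, in your Step 2 the claim that $P_\bullet/P_\bullet J^l\to M/MJ^l$ stays exact does not follow from flatness of the $P_j$ (the syzygies need not be flat); it only holds up to essentially zero systems, via Artin--Rees, after passing to $\dirlim_l$. The paper sidesteps this entirely in its final step by a cleaner $\partial$-functor argument: both $\Ext^i_\Escr(-,N)$ and $`\Ext^i_\Escr(-,N)$ agree in degree $0$ and are both effaced by $\bigoplus_i O(-n_i)\to M$ with $n_i$ large, hence coincide. You may want to adopt that route for your Step 2 once the uniform vanishing is in place.
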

The property for a sequence to be strongly ample lifts well under deformations.
\begin{theorem} (an extract of Theorem \ref{ref-4.2-44}) Let $\Dscr$
  be an $R$-deformation of an $\Ext$-finite flat $R/J$-linear
  noetherian abelian category $\Cscr$ and $(O(n))_n$ be a sequence of
  $R$-flat objects in $\Dscr$. Then $(O(n)/O(n)J)_n$ is strongly ample
  in $\Cscr$ if and only if $(O(n))_n$ is strongly ample in $\Dscr$.
\end{theorem}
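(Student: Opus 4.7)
The plan is to verify conditions (A1) and (A2) separately in each category. Reduction modulo $J$ is essentially formal; lifting to $\Dscr$ is where the real work lies.

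\emph{Reduction.} Let $M \in \Cscr \subset \Dscr$, viewed as a $J$-annihilated object of $\Dscr$. Applying (A1) in $\Dscr$ yields an epi $\bigoplus_{i=1}^t O(-n_i) \twoheadrightarrow M$ with $n_i \geq n$; since $MJ=0$, this factors through $\bigoplus O(-n_i)/O(-n_i)J \twoheadrightarrow M$, proving (A1) in $\Cscr$. For (A2), the $R$-flatness of $O(-n)$ gives the base-change identity $\Ext^i_\Dscr(O(-n), M) = \Ext^i_\Cscr(O(-n)/O(-n)J, M)$ for $M \in \Cscr$, so (A2) in $\Dscr$ descends.

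\emph{Lifting.} For (A1) in $\Dscr$, given $M$ and $n$, apply (A1) in $\Cscr$ to $M/MJ \in \Cscr$ to produce $\bar f: \bigoplus_{i=1}^t O(-n_i)/O(-n_i)J \twoheadrightarrow M/MJ$ with $n_i \geq n$, enlarging $n$ if necessary to secure the uniform vanishing described below. Lift $\bar f$ inductively along the tower $(M/MJ^k)_k$: the obstruction at step $k$ lies in $\bigoplus_i \Ext^1_\Cscr(O(-n_i)/O(-n_i)J, MJ^{k-1}/MJ^k)$ (via base change) and vanishes for $n$ large. Completeness of $M$ assembles the compatible system into $f: \bigoplus O(-n_i) \to M$ reducing to $\bar f$ mod $J$. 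The cokernel $C$ lies in the complete category $\Dscr$ and satisfies $C = CJ$, hence $C = CJ^k$ for all $k$ and $C = \invlim_k C/CJ^k = 0$. For (A2) in $\Dscr$ one argues analogously: reduce the vanishing of $\Ext^i_\Dscr(O(-n), M)$ to vanishing along the layers $MJ^{k-1}/MJ^k$ via base change and an inverse-limit argument on $M/MJ^k$ (using that inverse limits in $\Pro(\Cscr_t)$ are exact), then apply (A2) in $\Cscr$ layer by layer.

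The main obstacle is securing the \emph{uniform} $\Ext$-vanishing along the layers of the $J$-adic filtration: the threshold on $n$ must be chosen independently of $k$. Without this uniformity neither the inductive lifting in (A1) nor the inverse-limit argument in (A2) terminates. The resolution rests on three noetherian ingredients working in concert: noetherianness of $R$ makes $\gr_J R$ noetherian; noetherianness of $\Dscr$ together with Artin--Rees-type control yields that $\bigoplus_k MJ^k/MJ^{k+1}$ is finitely generated as a graded $\gr_J R$-module in $\Cscr$; and $R$-flatness of the $O(n)$, combined with $\Ext$-finiteness and (A2) in $\Cscr$, translates this graded finite generation into the required uniform bound.
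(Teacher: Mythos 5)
Your overall strategy coincides with the paper's: the reduction direction via the flat base-change isomorphism $\Ext^i_\Dscr(O(-n),M)\cong\Ext^i_\Cscr(O(-n)/O(-n)J,M)$ for $M\in\Cscr$, and the lifting direction resting on the \emph{uniform} vanishing $\Ext^i_\Dscr(O(-m),MJ^k/MJ^{k+1})=0$ for all $k$ once $m$ is large, extracted from the noetherianness of $\bigoplus_k MJ^k/MJ^{k+1}$ as a graded $(\gr_J R,\Cscr)$-object by induction on the number of generators of $J$. (That noetherianness comes from the Hilbert-basis-theorem Lemma \ref{ref-2.2.2-3}, not from Artin--Rees, but this is a mislabeling rather than an error.) Your treatment of (A1) --- lift the epimorphism layer by layer using the $\Ext^1$-vanishing, assemble by completeness, kill the cokernel by Nakayama --- is exactly the paper's.

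The genuine gap is in (A2) for $\Dscr$. The inverse-limit argument you describe takes place in $\Pro(\Cscr_t)$, and even granting the Mittag--Leffler conditions needed to commute $H^i$ with $\invlim$ (the content of Proposition \ref{ref-2.5.3-22}), what it yields is the vanishing of the \emph{completed} groups $`\Ext^i_\Dscr(O(-m),M)=\Ext^i_{\Pro(\Cscr_t)}(O(-m),M)$, not of the Yoneda groups $\Ext^i_\Dscr(O(-m),M)$ that condition (A2) actually requires. The object $M$ is an inverse limit of the $M/MJ^k$ only in $\Pro(\Cscr_t)$, not in $\Dscr$ itself, so exactness of inverse limits there gives no direct handle on extensions computed inside $\Dscr$. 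The paper closes this by a separate step (Step 4 of the proof of Proposition \ref{ref-4.1-37}, invoked at the end of the proof of Theorem \ref{ref-4.2-44}): one first checks $`\Hom=\Hom$ --- injectivity via Artin--Rees plus Nakayama, surjectivity by lifting a compatible system of maps using the already-established $\Ext^1$-vanishing --- and then observes that both $`\Ext^i$ and $\Ext^i$ are effaceable in the first argument by epimorphisms from sums of $O(-n_i)$ with $n_i$ large (which uses (A1), proved beforehand), hence agree as universal $\partial$-functors. Without some version of this comparison your argument establishes strong ampleness only with $`\Ext$ in place of $\Ext$, which is a strictly weaker conclusion.
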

Many algebraic varieties (e.g.\ Del Pezzo surfaces) have a strongly
ample sequence consisting of exceptional objects. Such a 
sequence can then be lifted to any deformation (see \S\ref{seclb}). This idea is basically
due to Bondal and Polishchuk and is described explicitly in \cite[\S
11.2]{VdBSt}. It was used to define non-commutative quadrics in
\cite{VdB26} and indirectly in the classification of non-commutative
Hirzebruch surfaces in \cite{VdB28}. See also the recent paper
\cite{LDeD}.

\medskip

Let us also mention that a very complete treatment of deformations of algebraic
varieties as ringed spaces (including their derived categories) over $k[[t]]$ has
been given in \cite{HMS}.

\section{Completion of abelian categories}

\subsection{Base extension}
\label{ref-2.1-1}
We recall briefly some notions from \cite{lowenvdb1}. Throughout $R$ will be a commutative
noetherian ring and $\mod(R)$ is its category of finitely generated modules.

Let $\Cscr$ be an $R$-linear abelian category.  Then we have bifunctors $-\otimes_R-:\Cscr\times \mod(R)\r
\Cscr$, $\Hom_R(-,-):\mod(R)^\circ\times \Cscr\r \Cscr$ defined in the usual
way.  These functors may be derived in their $\mod(R)$-argument to
yield bi-delta-functors $\Tor^R_i(-,-)$, $\Ext_R^i(-,-)$.  An object $M\in \Cscr$ is
\emph{$R$-flat} if $M\otimes_R -$ is an exact functor, or equivalently if $\Tor_i^R(M,-)=0$ for
$i>0$.

By
definition (see \cite[\S3]{lowenvdb1}) $\Cscr$ is $R$-\emph{flat} if
$\Tor^R_i$ or equivalently $\Ext^i_R$ is effaceable in its
$\Cscr$-argument for $i>0$.  This implies that $\Tor^R_i$ and
$\Ext_R^i$ are universal $\partial$-functors in both arguments.

If $f:R\r S$ is a morphism of commutative rings and $\Cscr$ is an
$R$-linear abelian category then $\Cscr_S$ denotes the (abelian)
category of objects in $\Cscr$ equipped with an $S$-action.  We
usually refer to objects in $\Cscr_S$ as $(S,\Cscr)$-objects and if
$S$ is graded then we also talk about graded $(S,\Cscr)$-objects.  If
$f$ is surjective then $\Cscr_S$ identifies with the full subcategory
of $\Cscr$ given by the objects annihilated by $\ker f$.  If $R$ is
noetherian and $S$ is module finite over $R$ then the inclusion
functor $\Cscr_S\r \Cscr$ has right and left adjoints given
respectively by $\Hom_R(S,-)$ and $-\otimes_R S$.
\subsection{Completion of noetherian abelian categories}
Below we refer to a pair $(R, J)$ where 
 $R$ is a commutative noetherian ring and $J\subset R$ is an ideal 
as a $J$-\emph{adic noetherian ring}.
Below $(R,J)$ is a $J$-adic noetherian ring. We put $R_n=R/J^n$ and we denote
the $J$-adic completion of $R$ by $\hat{R}$. This is also a noetherian ring.
Using a slight abuse of notation we denote the extended ideal $J\hat{R}$ by $J$.

Recall that an abelian category $\Cscr$ is said to be \emph{noetherian} if it
is essentially small and all objects are noetherian. Below $\Cscr$ is an $R$-linear
noetherian category.

If $\Dscr$ is an essentially small abelian
  category then the category $\Pro(\Dscr)$ of \emph{pro-objects} over $\Dscr$ is the category
whose objects are filtered inverse systems $(M_\alpha)_\alpha$ and 
whose $\Hom$-sets are given by
\begin{equation}
\label{ref-2.1-2}
\Hom_{\Pro(\Dscr)}((M_\alpha)_\alpha,(N_\beta)_\beta)=
\invlim_\beta\dirlim_\alpha\Hom_\Dscr(M_\alpha,N_\beta)
\end{equation}
In other words if we identify $\Dscr$ with the one-object inverse systems in $\Pro(\Dscr)$ then 
$(M_\alpha)_\alpha=\invlim_\alpha M_\alpha$ in $\Pro(\Dscr)$. 
\begin{lemmas} \cite[\S I.8]{SGA41} Assume that $\Dscr$ is an essentially small abelian
  category. Then $\Pro(\Dscr)^{\text{opp}}$ is a Grothendieck category
  and in particular $\Pro(\Dscr)$ has exact filtered inverse limits
  and enough projectives. The natural functor $\Dscr\r \Pro(\Dscr)$ is
  fully faithful exact and its essential image is closed under
  extensions. If $D\in \Dscr$ then $\Hom_\Dscr(-,D)$ sends inverse limits
to direct limits (in other words $D$ is co-finitely presented). 
\end{lemmas}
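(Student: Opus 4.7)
The plan is to reduce everything to the well-studied ind-category. By definition $\Pro(\Dscr)=\Ind(\Ascr)^{\text{opp}}$ with $\Ascr:=\Dscr^{\text{opp}}$, so the assertions dualize to showing that $\Ind(\Ascr)$ is a Grothendieck category, that the natural embedding $\Ascr\r\Ind(\Ascr)$ is fully faithful exact with essential image closed under extensions, and that every $A\in\Ascr$ is finitely presented in $\Ind(\Ascr)$ in the sense that $\Hom(A,-)$ commutes with filtered colimits.

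First I would construct the abelian structure on $\Ind(\Ascr)$ via the standard arrow-category reindexing trick: any morphism between two ind-objects admits, after a cofinal change of indexing, a presentation as a morphism of diagrams over a common filtered index category, which lets one compute kernels, cokernels and filtered colimits levelwise. Exactness of filtered colimits then follows immediately, and a set of representatives of the essentially small category $\Ascr$ furnishes a generating family whose coproduct is a single generator; thus $\Ind(\Ascr)$ is Grothendieck, and has enough injectives by Grothendieck's general theorem, which dualizes to enough projectives in $\Pro(\Dscr)$. Full faithfulness and exactness of $\Ascr\r\Ind(\Ascr)$ are then direct from \eqref{ref-2.1-2} applied to constant diagrams combined with the levelwise description of kernels and cokernels, and the Hom formula yields $\Hom(A,\dirlim_i M_i)=\dirlim_i\Hom(A,M_i)$ for $A\in\Ascr$, which is precisely the co-finite-presentation property after dualizing.

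The step I expect to be the principal obstacle is closure of the essential image of $\Ascr$ under extensions. Given $0\r A\r X\r B\r 0$ in $\Ind(\Ascr)$ with $A,B\in\Ascr$, write $X=\dirlim_i X_i$ with $X_i\in\Ascr$, and let $K_i=\ker(X_i\r B)$. Exactness of filtered colimits gives $\dirlim_i\im(X_i\r B)=\im(X\r B)=B$ and $\dirlim_i K_i=A$. Applying finite presentability of $B$ to the first equality shows that $\im(X_i\r B)=B$ for $i$ large (since $\id_B$ factors through some $\im(X_i\r B)\hookrightarrow B$, which is then split epi and mono hence an isomorphism); applying finite presentability of $A$ to the second equality produces a section $s\colon A\r K_{i_0}$ of a structure map $K_{i_0}\r A$, whence a splitting $K_{i_0}=A\oplus L$ with $L=\ker(K_{i_0}\r A)$. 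Since $K_{i_0}\r X$ factors through $\ker(X\r B)=A$ and $L\r K_{i_0}\r A$ vanishes by construction, the composite $L\hookrightarrow X_{i_0}\r X$ is zero, so we obtain an induced map $X_{i_0}/L\r X$ which fits in a morphism of short exact sequences $0\r A\r X_{i_0}/L\r B\r 0$ and $0\r A\r X\r B\r 0$ with identities on $A$ and $B$; the five lemma then gives $X\cong X_{i_0}/L\in\Ascr$.
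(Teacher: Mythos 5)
The paper does not prove this lemma at all: it is quoted as a known fact with the reference \cite[\S I.8]{SGA41}, so there is no in-paper argument to compare yours against. What you have written is, in substance, the standard proof from that reference (see also Kashiwara--Schapira), and it is correct: the dualization $\Pro(\Dscr)=\Ind(\Dscr^{\mathrm{opp}})^{\mathrm{opp}}$, the levelwise computation of kernels, cokernels and filtered colimits after reindexing, the generator obtained as the coproduct of a set of representatives of $\Ascr$, and the deduction of enough projectives from Grothendieck's theorem on injectives are all as they should be, and your extension-closure argument via finite presentability of $A$ and $B$ goes through (I checked the splitting $K_{i_0}=A\oplus L$ and the five-lemma conclusion). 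Two points deserve care. First, the reindexing lemma --- that a morphism of ind-objects can be realized as a levelwise morphism of diagrams over a common filtered index after cofinal change --- is the genuine technical content of the whole statement and should be either proved or cited precisely; an alternative that bypasses it entirely is to identify $\Ind(\Ascr)$ with the category of left exact functors $\Ascr^{\mathrm{opp}}\r \mathbf{Ab}$, which Gabriel's theory exhibits directly as a Grothendieck category with the required properties. Second, in the extension step you should say explicitly that $K_i$ is the kernel of the composite $X_i\r X\r B$ and that $i_0$ is chosen large enough (possible by filteredness) that \emph{both} $\im(X_{i_0}\r B)=B$ and the section $s\colon A\r K_{i_0}$ exist; as written these two largeness conditions are imposed in separate sentences and only implicitly combined.
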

\begin{definitions}
The \emph{completion} $\widehat{\Cscr}$  of $\Cscr$ is the full subcategory of $\Pro(\Cscr)$
consisting of the objects $M$ such that $M/MJ^n\in \Cscr$ for all $n$ and
such that  the canonical map $M\r \invlim_n M/MJ^n$ is an
isomorphism. 
\end{definitions}
It is easy to see that  $\widehat{\Cscr}$ is a $\widehat{R}$-linear category.
To study objects in $\widehat{\Cscr}$ we need to consider filtrations.
 By
  definition a filtration on an object $M$ of an $R$-linear category
  $\Dscr$ is a descending chain of subobjects $M=\cdots \supset F_0M\supset
  F_1M\supset\cdots$.  The associated graded objects
  $\gr_F M$ is the $\ZZ$-graded object over~$\Dscr$ defined by the
  formal direct sum $\bigoplus_n F_{n}M/F_{n+1}M$.  By $F_J$ we denote the $J$-adic
  filtration. I.e.\ $F_{J,i}M=MJ^i$ for $i\ge 0$ and $F_{J,i}M=M$ for $i\le 0$.

We say that the filtration $F$ is \emph{adapted} to $J$ if
  $(F_iM)J\subset F_{i+1}M$ (see \cite[\S 4.2]{Joua}). In that case $\gr_F M$ is
a graded $(\gr_J R,\Dscr_{R/J})$-object. 

\begin{lemmas} \label{ref-2.2.2-3} If $M$ is a noetherian object in an
  $R$-linear abelian category $\Dscr$ and~$S$ is a positively graded
  noetherian $R$-algebra such that $S_0=R$. Then $M\otimes_R S$ is a noetherian graded
$(S,\Dscr)$-object
\end{lemmas}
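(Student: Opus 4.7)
The plan is to reduce to the case where $S=\tilde S:=R[x_1,\dots,x_t]$ is a polynomial ring over $R$ in graded variables and then prove this case by iterating a one-variable Hilbert basis theorem, finally descending from multigraded to singly graded noetherianness via a monomial-order argument. For the reduction, since $S$ is positively graded noetherian with $S_0=R$, homogeneous generators $\xi_1,\dots,\xi_t\in S_+$ of positive degrees $d_i$ also generate $S$ as an $R$-algebra, yielding a graded surjection $\tilde S\twoheadrightarrow S$ (with $\deg x_i=d_i$) and hence a graded epimorphism $M\otimes_R\tilde S\twoheadrightarrow M\otimes_R S$. Graded $S$-subobjects of $M\otimes_R S$ coincide with graded $\tilde S$-subobjects (the $\tilde S$-action factoring through $S$), so it is enough to treat the polynomial ring case.

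For $\tilde S$ I would first establish the one-variable Hilbert basis theorem in the abelian setting: if $X$ is noetherian in an $R$-linear abelian category $\Escr$, then $X\otimes_R R[x]$ is a noetherian graded $(R[x],\Escr)$-object. The classical pattern applies: given a chain $P^{(1)}\subset P^{(2)}\subset\cdots$ of graded subobjects, identify each $P^{(k)}_n\subset X\otimes_R R x^n\cong X$ with a subobject $L_n^{(k)}\subset X$, and use $L_n^{(k)}\subset L_{n+1}^{(k)}$ (from the $x$-action), $L_n^{(k)}\subset L_n^{(k+1)}$, together with noetherianness of $X$ applied diagonally in three stages (stabilize $\bigcup_n L_n^{(k)}$ in $n$, the resulting chain in $k$, then the finitely many initial levels) to force stabilization. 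Applying this inductively in the $R$-linear abelian categories of $\ZZ^i$-multigraded $R[x_1,\dots,x_i]$-objects in $\Dscr$, one obtains noetherianness of $M\otimes_R\tilde S$ as an $\NN^t$-\emph{multigraded} $\tilde S$-object.

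The remaining task is to descend from multigraded to singly graded (by weighted total degree) noetherianness: singly graded $\tilde S$-subobjects are strictly more general than multigraded ones (for instance $(x_1-x_2)\subset R[x_1,x_2]$ is graded by total degree but not multigraded), and this descent is the main obstacle. Fix a monomial order $<$ on monomials of $\tilde S$ refining the weighted total degree (e.g.\ deglex). For a singly graded subobject $N\subset M\otimes_R\tilde S$ and each total degree $n$, use the finite filtration $F^\alpha(M\otimes_R\tilde S)_n:=\bigoplus_{\beta\le\alpha,\,\deg x^\beta=n}M\cdot x^\beta$ to define ``leading term'' subobjects $L_\alpha(N):=(N_n\cap F^\alpha)/(N_n\cap F^{<\alpha})\hookrightarrow F^\alpha/F^{<\alpha}\cong M$. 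Compatibility of $<$ with multiplication gives $L_\alpha(N)\subset L_{\alpha+e_i}(N)$, so $L(N):=\bigoplus_\alpha L_\alpha(N)\cdot x^\alpha$ is a multigraded $\tilde S$-subobject of $M\otimes_R\tilde S$; a finite induction along $<$ within each total degree shows that $L(N)=L(N')$ together with $N\subset N'$ force $N=N'$. Hence any strict ascending chain of singly graded subobjects produces a strict ascending chain of multigraded ones, which stabilizes by the iterated Hilbert basis theorem above, completing the argument.
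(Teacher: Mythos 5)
Your argument is correct, and it is considerably more than the paper provides: the paper's entire proof of this lemma is a citation to Jouanolou's variant of the Hilbert basis theorem ([Joua, Thm 5.1.4] and [Joua, Lemma 4.2.4]), so your write-up is a genuine, self-contained replacement rather than a re-derivation of an argument in the text. Each of your three steps holds up: the reduction to a weighted polynomial ring $\tilde S\twoheadrightarrow S$ is legitimate because graded $S$-subobjects of $M\otimes_R S$ coincide with graded $\tilde S$-subobjects and quotients of noetherian graded objects are noetherian; the one-variable case is genuinely easy in the graded setting, since a graded $R[x]$-subobject of $X\otimes_R R[x]$ is nothing but an ascending chain of subobjects of $X$, and your three-stage stabilization is the standard double-chain argument; and you correctly identify the real content as the passage from $\NN^t$-multigraded to singly graded ACC, which you handle with a monomial order refining weighted total degree and a leading-term subobject $L(N)$ — the checks that $L(N)$ is a multigraded subobject (compatibility of the order with multiplication by $x_i$) and that $N\subset N'$ with $L(N)=L(N')$ forces $N=N'$ (five lemma along the finite filtration $F^\alpha$ in each degree) both go through in an abelian category. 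Two cosmetic points: your illustrative example $(x_1-x_2)$ is only homogeneous when $d_1=d_2$, and it is worth saying explicitly that each graded piece $M\otimes_R\tilde S_n$ is a finite direct sum of copies of $M$ (so the filtration $F^\alpha$ is by honest subobjects of an object of $\Dscr$); neither affects correctness. Note also that this Gr\"obner-style descent is a different mechanism from the induction the paper uses in the neighbouring Lemma \ref{ref-2.3.2-17} (pick a generator $t$, split into $t$-torsion and $t$-torsion-free parts); that dichotomy is suited to transporting noetherianity through a $\partial$-functor but does not by itself prove the basis theorem, so your choice of tool is apt.
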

\begin{proof} This follows from a variant of  Hilbert's basis theorem. See e.g.\ 
\cite[Thm 5.1.4]{Joua} and \cite[Lemma 4.2.4]{Joua}.
\end{proof}

\begin{lemmas} \label{ref-2.2.3-4} (compare with \cite[Thm 4.2.6]{Joua}) Assume that $K\in \Pro(\Cscr)$ is
  equipped with a $J$-adapted filtration $F$ such that
\begin{enumerate}
\item $\invlim_n K/F_nK=K$.
\item $\gr_F K$ is a noetherian graded $(\gr_J R,\Cscr_{R/J})$-object.
\end{enumerate}
Then $K\in \widehat{\Cscr}$. 
\end{lemmas}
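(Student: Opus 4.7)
The plan is to verify the two defining conditions of $\widehat{\Cscr}$ for $K$: that $K/KJ^n\in\Cscr$ for every $n$, and that the canonical map $K\to\invlim_n K/KJ^n$ is an isomorphism. I would first establish that the more convenient quotients $K/F_nK$ lie in $\Cscr$, then show via an Artin--Rees-type argument that the filtrations $(F_nK)_n$ and $(KJ^n)_n$ are cofinal on $K$, and finally combine these to deduce both defining conditions.

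For the first step, note that each graded piece $V_i:=F_iK/F_{i+1}K$ is an object of $\Cscr_{R/J}\subset\Cscr$ by the hypothesis that $\gr_F K$ is a graded $(\gr_J R,\Cscr_{R/J})$-object. Inductively using the short exact sequences $0\to V_n\to K/F_{n+1}K\to K/F_nK\to 0$ in $\Pro(\Cscr)$, together with the closure of $\Cscr$ under extensions in $\Pro(\Cscr)$ (Lemma 2.2.1), one concludes $K/F_nK\in\Cscr$ for all $n$.

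The heart of the proof is the cofinality. The inclusion $KJ^n\subset F_nK$ is immediate from $J$-adaptedness and $F_0K=K$. For the opposite direction, I would aim to produce $c\ge 0$ with $F_{n+c}K\subset KJ^n$. Since $\gr_F K$ is noetherian as a graded $\gr_J R$-object and $\gr_J R$ is generated in degree one over $R/J$, it is finitely generated and there exists $n_0$ such that for $n>n_0$ the multiplication $V_{n-1}\otimes_{R/J} J/J^2\to V_n$ is an epimorphism in $\Cscr_{R/J}$. Translated back to sub-pro-objects of $K$, this yields $F_nK=F_{n-1}K\cdot J+F_{n+1}K$ for $n>n_0$. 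Iterating produces $F_nK=F_{n-1}K\cdot J+F_{n+k}K$ for every $k\ge 1$. Condition~(1) forces $\invlim_k F_kK=0$ inside $\Pro(\Cscr)$, and since filtered inverse limits in $\Pro(\Cscr)$ are exact (Lemma 2.2.1), passing to the limit in $k$ gives $F_nK=F_{n-1}K\cdot J$ for $n>n_0$. Iterating once more yields $F_nK=F_{n_0}K\cdot J^{n-n_0}\subset KJ^{n-n_0}$ for $n\ge n_0$, so $c=n_0$ works.

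With cofinality in hand, condition~(1) gives $K\cong\invlim_n K/F_nK\cong\invlim_n K/KJ^n$. For fixed $n$ and any $m\ge n+c$, the sub-pro-object $KJ^n\subset K$ appears at level $m$ as an honest subobject of $K/F_mK\in\Cscr$ (sub-pro-objects of noetherian objects of $\Cscr$ stay in $\Cscr$), and because $F_mK\subset KJ^n$ the pro-object $K/KJ^n$ is identified with the $\Cscr$-quotient of $K/F_mK$ by that subobject. This exhibits $K/KJ^n$ as an object of $\Cscr$, completing the verification. The main obstacle is the cofinality step: unpacking noetherianness of $\gr_F K$ into the concrete equality $F_nK=F_{n-1}K\cdot J+F_{n+1}K$ at the level of sub-pro-objects of $K$, and then eliminating the trailing $F_{n+k}K$ term via condition~(1) and exactness of inverse limits in $\Pro(\Cscr)$. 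Once this cofinality is established, everything else is formal.
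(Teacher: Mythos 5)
Your overall architecture matches the paper's: show $K/F_nK\in\Cscr$ via extensions, prove the two filtrations are cofinal using noetherianness of $\gr_F K$ together with condition (1) and exactness of filtered inverse limits, and then transfer the two defining properties of $\widehat{\Cscr}$ from $F$ to the $J$-adic filtration. Your cofinality argument, however, is genuinely different in execution and is correct: the paper introduces the auxiliary chain $F^{(r)}_iK=KJ^{i-r}\cap F_{i-1}K+F_iK$, deduces $F_iK\subset KJ^{i-r}+F_jK$ from its stabilization, and then removes the trailing term by a diagram chase with generators of $J^{i-r}$; you instead extract from noetherianness that $\gr_FK$ is generated in degrees $\le n_0$, hence $F_nK=F_{n-1}K\cdot J+F_{n+k}K$ for $n>n_0$ and all $k$, and kill the tail using $\invlim_kF_kK=0$. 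Your version yields the stronger, classical statement that the filtration is $J$-stable ($F_nK=F_{n_0}K\cdot J^{\,n-n_0}$), which is arguably cleaner; the paper's version only needs the inclusion $F_iK\subset KJ^{i-r}$ but avoids having to unpack generation in bounded degrees.

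There is one genuine flaw, in the last step. The parenthetical principle you invoke --- that sub-pro-objects of (noetherian) objects of $\Cscr$ again lie in $\Cscr$ --- is false: $\Cscr$ is an abelian subcategory of $\Pro(\Cscr)$ closed under extensions, but not under subobjects taken in $\Pro(\Cscr)$ (for $\Cscr=\mod(R)$ the descending system $(J^n)_n$ is a nonzero subobject of $R$ in $\Pro(\mod(R))$ that is not essentially constant). So you cannot conclude $KJ^n/F_mK\in\Cscr$ this way. The specific subobject you need is fine, though: either observe, as the paper does, that $\Cscr$ is closed under $-\otimes_RX$ for $X\in\mod(R)$ (such a tensor is a cokernel of a map between finite direct sums of copies of the given object), so that $K/KJ^n=(K/F_mK)\otimes_RR/J^n\in\Cscr$ once $F_mK\subset KJ^n$; or choose generators $f_1,\dots,f_p$ of $J^n$ and note that $KJ^n/F_mK$ is the image of $(K/F_mK)^p\xrightarrow{(f_i)}K/F_mK$, a morphism in $\Cscr$. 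With that repair the proof is complete. (You also silently use that $F_nK=K$ for $n\ll0$ to start the induction showing $K/F_nK\in\Cscr$; this rests on left-boundedness of $\gr_FK$, which the paper records explicitly.)
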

\begin{proof} 
 We follow somewhat the idea of \cite[Lemma
4.2.7]{Joua}.  For any $r\ge 0$ define $F^{(r)}_i K=KJ^{i-r}\cap
F_{i-1} K+F_i K$ (with $J^{i-r}=R$ for $r\ge i$).
Then we have $F_i K\subset F^{(r)}_i K\subset F_{i-1}K$ and $(F^{(r)}_i K)J\subset
F^{(r)}_{i+1} K\subset F_i K$. In other words $\gr^{(r)}_FK\overset{\text{def}}{=}\bigoplus
F^{(r)}_{i+1} K/F_{i+1} K$ is an ascending chain of graded $(\gr_J R,\Cscr_{R/J})$-subobjects of
$\gr_F K$ which must be stationary. Thus there is an $r$ such that for all $i$
\[
KJ^{i-r}\cap
F_{i-1} K+F_i K=KJ^{i-r-1}\cap
F_{i-1} K+F_i K=\cdots=F_{i-1}K
\]
and in particular $F_{i-1}K\subset KJ^{i-r}+F_i K$. Iterating this inclusion and renumbering
we get that there exists an $r$ such that 
\[
F_iK\subset KJ^{i-r}+F_jK
\]
for all $j\ge i$.  Fix $i$ and choose generators $f_1,\ldots,f_p$ for $J^{i-r}$. Then
we get diagrams for $j\ge i$
\begin{equation}
\label{ref-2.2-5}
\xymatrix{
&&0\\
  (K/F_jK)^p\ar^{(f_i)_i}[r] &K/F_j K\ar[r] &K/(F_j K+J^{i-r} K)\ar[r]\ar[u]&0\\
&K\ar[r]\ar[u]&K/F_i K\ar[u]
}
\end{equation}
Using exactness
of filtered inverse limits we get from \eqref{ref-2.2-5} 
\[
\xymatrix{
&&0\\
  K^p\ar^{(f_i)_i}[r] &K\ar[r]&\invlim_j K/(F_j K+J^{i-r} K)\ar[r]\ar[u]&0\\
&K\ar[r]\ar@{=}[u]&K/F_i K\ar[u]
}
\]

and hence from the upper exact sequence we obtain
\[
K/KJ^{i-r}=\invlim_j K/(F_j K+J^{i-r} K)
\]
In other words the identity map $K\r K$ induces a map $K/F_iK\r
K/KJ^{i-r}$ which yields $F_i K\subset KJ^{i-r}$.  

\medskip

The fact that $\gr_F K$ is noetherian implies easily that it has left bounded grading. 
Since $\Cscr$ is closed under extensions inside $\Pro(\Cscr)$ it follows that $K/F_iK\in\Cscr$
for all~$i$.  Furthermore since $\Cscr$ is an abelian subcategory of $\Pro(\Cscr)$ it
is also closed under $-\otimes_RM$ for $M\in \mod(R)$. 

Hence $K/KJ^i=(K/F_{i+r}K)
\otimes_R R/J^i\in \Cscr$.  Furthermore since the $J$-adic filtration and the
$F$-filtration are cofinal we also get $\invlim_n K/KJ^n=\invlim_n K/F_nK=K$.
This shows that indeed $K\in \widehat{\Cscr}$. 
\end{proof}

\begin{propositions} \label{ref-2.2.4-6} (compare with \cite[Thm 5.2.3]{Joua}) $\widehat{\Cscr}$ is a noetherian abelian subcategory of $\Pro(\Cscr)$. 
\end{propositions}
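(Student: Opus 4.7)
The plan is to apply Lemma \ref{ref-2.2.3-4} to establish closure of $\widehat{\Cscr}$ under subobjects and quotients in $\Pro(\Cscr)$, and then to derive noetherianity from a graded-lifting argument. The crucial preliminary I would establish first is that, for any $M\in\widehat{\Cscr}$, the associated graded $\gr_J M=\bigoplus_{n\ge 0}MJ^n/MJ^{n+1}$ is noetherian as a graded $(\gr_J R,\Cscr_{R/J})$-object. This follows from the surjection $(M/MJ)\otimes_{R/J}\gr_J R\twoheadrightarrow\gr_J M$ induced by multiplication: the source is noetherian by Lemma \ref{ref-2.2.2-3}, applied over the base ring $R/J$ to $M/MJ\in\Cscr_{R/J}$ and the positively graded noetherian $R/J$-algebra $\gr_J R$ (noetherian since $J$ is finitely generated).

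Next, for a subobject $K\subseteq M\in\widehat{\Cscr}$ in $\Pro(\Cscr)$, equip $K$ with the induced filtration $F_iK:=K\cap MJ^i$. It is $J$-adapted, $\gr_F K$ embeds into $\gr_J M$ (hence is noetherian), and exactness of $\invlim$ in $\Pro(\Cscr)$ applied to $0\to F_iK\to K\to K/F_iK\to 0$ gives $\invlim K/F_iK=K$, since $\invlim F_iK\subseteq\invlim MJ^i=0$. Lemma \ref{ref-2.2.3-4} then yields $K\in\widehat{\Cscr}$. For a quotient $Q=M/K$ with $M\in\widehat{\Cscr}$, I would use the quotient filtration $F_iQ:=(MJ^i+K)/K$, whose graded pieces are quotients of those of $\gr_J M$; applying the snake lemma to $0\to K\to M\to Q\to 0$ with filtrations $F_\bullet K=K\cap MJ^\bullet$, $F_\bullet M=MJ^\bullet$, $F_\bullet Q$ gives, after exact $\invlim$, the identity $\invlim Q/F_iQ=Q$. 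Lemma \ref{ref-2.2.3-4} applies again, so $Q\in\widehat{\Cscr}$. Since kernels and cokernels of morphisms in $\widehat{\Cscr}$ computed in $\Pro(\Cscr)$ are thus in $\widehat{\Cscr}$, the inclusion is exact and $\widehat{\Cscr}$ is an abelian subcategory.

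For noetherianity, let $K_1\subseteq K_2\subseteq\cdots$ be an ascending chain of subobjects of some $M\in\widehat{\Cscr}$. The induced filtrations $F_iK_n=K_n\cap MJ^i$ produce an ascending chain of graded subobjects $\gr_F K_n\subseteq\gr_J M$, which stabilizes at some $N$ by the first paragraph. For $n\ge N$ put $L_n:=K_n/K_N\hookrightarrow M/K_N$. The filtration $F_\bullet K_N=K_N\cap MJ^\bullet$ on $K_N$ coincides with its restriction from $F_\bullet K_n$ (using $K_N\subseteq K_n$), so $0\to K_N\to K_n\to L_n\to 0$ is strictly filtered and yields an exact sequence $0\to\gr K_N\to\gr K_n\to\gr L_n\to 0$; since $\gr K_N=\gr K_n$ we get $\gr L_n=0$, hence $L_n=F_iL_n\subseteq(M/K_N)J^i$ for every $i$. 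By closure under quotients, $M/K_N\in\widehat{\Cscr}$, so the exact $\invlim$ of $0\to(M/K_N)J^i\to M/K_N\to(M/K_N)/(M/K_N)J^i\to 0$ forces $\invlim_i(M/K_N)J^i=0$; thus $L_n=0$ and $K_n=K_N$.

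The main obstacle is this final stabilization step: lifting stability of the gradeds to stability of the $K_n$ requires a separation statement for $M/K_N$, which only becomes available after closure under quotients is in hand. The three parts of the proof must therefore be executed in this order, all pivoting on the single Hilbert-basis-type input that $\gr_J M$ is a noetherian graded $(\gr_J R,\Cscr_{R/J})$-object.
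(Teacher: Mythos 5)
Your proof is correct and follows essentially the same route as the paper: both arguments pivot on Lemma \ref{ref-2.2.2-3} to make $\gr_{F_J} M$ a noetherian graded $(\gr_J R,\Cscr_{R/J})$-object and on Lemma \ref{ref-2.2.3-4} to recognize membership in $\widehat{\Cscr}$ from the induced filtration, and both deduce stabilization of an ascending chain from stabilization of the associated gradeds (the paper via the five lemma giving $N_1/F_iN_1=N_2/F_iN_2$ followed by inverse limits, you via separatedness of the quotient $M/K_N$ --- an equivalent reformulation). The only point you omit is that $\widehat{\Cscr}$ must also be shown essentially small, which is part of the paper's definition of a noetherian category; this is immediate since objects of $\widehat{\Cscr}$ are determined by the $\ZZ$-indexed inverse systems $(M/MJ^n)_n$ over the essentially small category $\Cscr$.
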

\begin{proof} 
We first prove that $\widehat{\Cscr}$ is an abelian
  subcategory of $\Pro(\Cscr)$. It is obviously closed under cokernels
  (using the exactness of $\invlim$ and right exactness of $-\otimes_R R/J^n$) so we must prove it is closed
  under kernels.

Let
\[
0\r K\r M\r N
\]
be an exact sequence in $\Pro(\Cscr)$ with $M$, $N\in
\widehat{\Cscr}$. We must prove $K\in \widehat{\Cscr}$.  Put
$F_iK\overset{\text{def}}{=}MJ^{i}\cap K\supset KJ^i$.  This is a filtration on $K$
which is  adapted to $J$. Furthermore we have exact sequences
\begin{equation}
\label{ref-2.3-7}
0\r K/F_nK\r M/MJ^n\r N/NJ^n
\end{equation}
By exactness of filtered inverse limits we deduce $K=\invlim_n K/F_nK$. 
Furthermore we obtain exact sequences
\[
0\r \gr_F K\r \gr_{F_J} M\r \gr_{F_J} N
\]
Since $M/MJ\in \Cscr$ it follows from Lemma \ref{ref-2.2.2-3} that $\gr_{F_J}M$ is 
a noetherian graded $(\gr_J R,\Cscr_{R/J})$ object. 
Hence $\gr_F K$ is also a noetherian graded $(\gr_J
R,\Cscr_{R/J})$-object.  By Lemma \ref{ref-2.2.3-4} we conclude $K\in\widehat{\Cscr}$.

It remains to show that $\widehat{\Cscr}$ is noetherian.  Since any
object $M$ in $\widehat{\Cscr}$ satisfies $M=\invlim_n M/J^nM$ and the
category of $\ZZ$-indexed inverse systems over $\Cscr$ is essentially small it
follows that $\widehat{\Cscr}$ is essentially small as well. Thus it remains to
show that any $M\in \widehat{C}$ is noetherian. 

Let $N\hookrightarrow M$ be a subobject of $M$ in $\widehat{\Cscr}$. Put
$F_n N=N\cap MJ^n$. Then $N/F_n N$ is the image of $N/NJ^n\r M/MJ^n$ and so
it lies in $\Cscr$. Furthermore taking the inverse limits of the maps
\[
\xymatrix{
N/NJ^n\ar@{->>}[r]&N/F_n N\ar@{^(->}[r] & M/MJ^n
}
\]
and using exactness of filtered inverse limits we get $N=\invlim_n N/F_n N$.  Now
assume that we have inclusions $N_1\subset N_2\subset M$ in $\widehat{\Cscr}$ such that if
we equip $N_1$, $N_2$ with the filtrations induced from the $J$-adic
filtration on $M$ then the map $\gr_F N_1\r \gr_F N_2$ is an
isomorphism. We claim that then necessarily $N_1=N_2$. Indeed from the
five lemma we obtain $N_1/F_i N_1=N_2/F_i N_2$.  It then suffices to
take inverse limits.

Now let $M^{(r)}\subset M$ be an ascending chain of subobjects and equip them
with the filtrations induced from the $J$-adic filtration on $M$. As indicated above
$\gr_{F_J} M$ is a noetherian graded object over $(\gr_J R,\Cscr_{R/J})$ and hence
the chain $(\gr_F M^{(r)})_r$ is stationary.  By the discussion in the previous paragraph
the chain $(M^{(r)})_r$ is stationary as well. 
\end{proof}
We may compare our definition of $\widehat{\Cscr}$ with the notion of \emph{$J$-adic inverse systems.}
\begin{definitions} (see \cite[\S3.1]{Joua}) Let $\Cscr$ be an $R$-linear noetherian abelian
  category. The category of \emph{$J$-adic inverse systems} $\check{\Cscr}$ over $\Cscr$ is
  defined as the full subcategory of inverse systems $(M_n,\phi_n)$
  over $\Cscr$ such that $M_nJ^n=0$ and such that the transition maps
  $\phi_n:M_n\r M_{n-1}$ induce isomorphisms $M_{n}/M_{n}J^{n-1}\r M_{n-1}$.
\end{definitions}
\begin{propositions} \label{ref-2.2.6-8} The functor
\[
\Sigma:\widehat{\Cscr}\r \check{\Cscr}:M\mapsto (M/MJ^n)_n
\]
is an equivalence of categories. Its inverse is given by
\[
\Psi:\check{\Cscr}\r \widehat{\Cscr}: (N_n)_n\mapsto \invlim_n N_n
\]
\end{propositions}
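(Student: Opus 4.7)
My plan is to verify that $\Sigma$ and $\Psi$ are mutually inverse equivalences in four steps: (a) a direct check that $\Sigma$ lands in $\check{\Cscr}$; (b) an application of Lemma \ref{ref-2.2.3-4} to show $\Psi$ lands in $\widehat{\Cscr}$; (c) the tautological identity $\Psi\circ\Sigma=\id$; and (d) a deduction of $\Sigma\circ\Psi\cong\id$ via full faithfulness of $\Psi$, thereby avoiding a delicate direct identification of $\ker(N\to N_k)$ with $NJ^k$.

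For (a), the system $(M/MJ^n)_n$ satisfies $(M/MJ^n)J^n=0$ and
\[
(M/MJ^n)/(M/MJ^n)J^{n-1}=M/(MJ^n+MJ^{n-1})=M/MJ^{n-1}.
\]
For (b), given $(N_n)_n\in\check{\Cscr}$ with $N:=\invlim_n N_n$, I would filter by $F_kN:=\ker(N\to N_k)$. Exactness of filtered inverse limits in $\Pro(\Cscr)$, together with surjectivity of each transition map $N_{m+1}\to N_m$, makes $N\to N_k$ an epimorphism, so $N/F_kN=N_k$ and hence $\invlim_k N/F_kN=N$. The filtration is $J$-adapted: since $F_kN\to N_{k+1}$ lands in $\ker(N_{k+1}\to N_k)=N_{k+1}J^k$, further multiplication by $J$ lands in $N_{k+1}J^{k+1}=0$, giving $(F_kN)J\subset F_{k+1}N$.

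The remaining hypothesis of Lemma \ref{ref-2.2.3-4} is that $\gr_FN=\bigoplus_{k\geq0}N_{k+1}J^k$ is a noetherian graded $(\gr_JR,\Cscr_{R/J})$-object. Since $N_{k+1}J^{k+1}=0$, the multiplication $N_{k+1}\otimes_R J^k\to N_{k+1}J^k$ factors through $(N_{k+1}/N_{k+1}J)\otimes_{R/J}(J^k/J^{k+1})=N_1\otimes_{R/J}(J^k/J^{k+1})$, yielding a graded surjection $N_1\otimes_{R/J}\gr_JR\twoheadrightarrow\gr_FN$. By Lemma \ref{ref-2.2.2-3}, applied to the positively graded noetherian $R/J$-algebra $\gr_JR$ and the noetherian object $N_1\in\Cscr_{R/J}$, the source is noetherian, hence so is the target. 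Lemma \ref{ref-2.2.3-4} then gives $N\in\widehat{\Cscr}$, and (c) $\Psi\circ\Sigma=\id$ is immediate from the definition of $\widehat{\Cscr}$.

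For (d), the factorizations $N\to N/NJ^k\twoheadrightarrow N_k$ (using $N_kJ^k=0$) assemble into a morphism $\alpha:\Sigma\Psi((N_n)_n)\to(N_n)_n$ in $\check{\Cscr}$ whose image under $\Psi$ is $\id_N$. It then suffices to show $\Psi$ is fully faithful, which forces $\alpha$ to be an isomorphism. For $(M_m),(N_n)\in\check{\Cscr}$, any map $M_m\to N_n$ factors through $M_m/M_mJ^n$, and for $m\geq n$ iterating the defining property of $\check{\Cscr}$ gives $M_m/M_mJ^n=M_n$, so the inner direct limit in \eqref{ref-2.1-2} stabilizes:
\[
\Hom_{\Pro(\Cscr)}(\invlim M_m,\invlim N_n)=\invlim_n\dirlim_m\Hom_\Cscr(M_m,N_n)=\invlim_n\Hom_\Cscr(M_n,N_n),
\]
which is precisely $\Hom_{\check{\Cscr}}(M_\bullet,N_\bullet)$. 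The subtle point sidestepped here is the identification $\ker(N\to N_k)=NJ^k$: Lemma \ref{ref-2.2.3-4} only yields $F_iN\subset NJ^{i-r}$ for some shift $r$, and full faithfulness of $\Psi$ is what converts this cofinality into an honest equality.
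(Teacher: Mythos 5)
Your proof is correct, but it takes a genuinely different and noticeably longer route than the paper's. The paper's entire argument is the single computation
\[
N/NJ^i=(\invlim_n N_n)\otimes_R R/J^i=\invlim_n(N_n/N_nJ^i)=N_i,
\]
justified by the fact that $-\otimes_R R/J^i$ is a cokernel of a map between finite direct sums, hence commutes with the exact filtered inverse limits of $\Pro(\Cscr)$, while the system $(N_n/N_nJ^i)_n$ is eventually constant equal to $N_i$. This one identity simultaneously shows $\Psi(N_\bullet)\in\widehat{\Cscr}$ and that $\Sigma\Psi=\id$ on the nose; in particular it establishes directly the equality $\ker(N\r N_i)=NJ^i$ that you deliberately sidestep. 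Your route instead invokes the heavier Lemma \ref{ref-2.2.3-4}, via the auxiliary filtration $F_kN=\ker(N\r N_k)$ and the graded surjection $N_1\otimes_{R/J}\gr_JR\twoheadrightarrow\gr_FN$ (both of which check out, though the identification $F_kN/F_{k+1}N\cong N_{k+1}J^k$ deserves the small diagram chase you omit), and then, because that lemma only yields cofinality of $F_\bullet N$ with the $J$-adic filtration, you must add a separate full-faithfulness argument for $\Psi$ to upgrade the comparison map $\alpha$ to an isomorphism. What your approach buys is an explicit computation of $\Hom_{\check{\Cscr}}$ as a stabilized double limit, which has some independent interest; what it costs is that the noetherian machinery of Lemma \ref{ref-2.2.3-4} is doing work that the paper's two-line exactness computation renders unnecessary.
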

\begin{proof} We first show that $\Psi$ is well defined.  Let
  $(N_n)_n\in \check{\Cscr}$ and let $N$ be its inverse limit in $\Pro(\Cscr)$. Using
  exactness of filtered inverse limits in $\Pro(\Cscr)$ we get
\[
N/NJ^i=(\invlim_n N_n)\otimes_R R/J^i=\invlim_n (N_n/N_n J^i)=N_i
\]
Thus we have indeed $N=\invlim_i N_i=\invlim_i N/NJ^i$.  From this reasoning we
also get $\Sigma\Psi(N_i)_i=(N_i)_i$. 

The fact that $\Psi\Sigma$ is the identity is by definition. 
\end{proof}

The following easy result motivates the definition of $\widehat{\Cscr}$. 
\begin{propositions} \label{ref-2.2.7-9} One has $\mod(R)\,\hat{}=\mod(\hat{R})$.
\end{propositions}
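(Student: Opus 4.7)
The plan is to reduce to the comparison between $J$-adic inverse systems and finitely generated $\hat R$-modules, which is a classical fact from commutative algebra, and then invoke Proposition \ref{ref-2.2.6-8}.

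First, I would apply Proposition \ref{ref-2.2.6-8} with $\Cscr = \mod(R)$ to replace $\widehat{\mod(R)}$ by the equivalent category $\check{\mod(R)}$ of $J$-adic inverse systems over $\mod(R)$. So it suffices to produce an equivalence $\mod(\hat R)\simeq \check{\mod(R)}$. The functor in one direction is
\[
\Phi\colon \mod(\hat R)\r \check{\mod(R)}\colon M\mapsto (M/MJ^n)_n.
\]
I would check this is well defined: each $M/MJ^n$ is a finitely generated $\hat R/J^n=R/J^n$-module, hence lies in $\mod(R)$; the $J$-adic compatibility condition on transition maps is immediate.

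The functor in the other direction is $\Psi(N_n)_n=\invlim_n N_n$, viewed as an $\hat R$-module. The key claim to verify is that $M\overset{\text{def}}{=}\invlim_n N_n$ is a \emph{finitely generated} $\hat R$-module. Using the analysis already present in the proof of Proposition~\ref{ref-2.2.6-8}, we have $M/MJ^n=N_n$ in $\Pro(\mod(R))$; in particular $M/MJ=N_1$ is a finitely generated $R$-module. Choose elements $x_1,\dots,x_k\in M$ whose images generate $N_1$. These define a morphism $\varphi\colon \hat R^{\,k}\r M$ of pro-objects, and it suffices to show it is surjective: modulo $J^n$ the statement is that $\varphi$ mod $J^n$ surjects onto $N_n$, which follows by induction on $n$ from the $J$-adic compatibility $N_n/N_nJ^{n-1}=N_{n-1}$ together with the surjectivity in the $n=1$ case; the statement for $M$ itself then follows from $M=\invlim_n N_n$ and exactness of filtered inverse limits in $\Pro(\mod(R))$. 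Hence $M\in\mod(\hat R)$.

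Finally I would verify that $\Phi$ and $\Psi$ are quasi-inverse. The equality $\Phi\Psi\cong \id$ is exactly the computation $M/MJ^i=N_i$ from the previous paragraph. For $\Psi\Phi\cong \id$ we need $M=\invlim_n M/MJ^n$ for every finitely generated $\hat R$-module $M$; this is the classical Krull/Artin--Rees statement that finitely generated modules over a noetherian complete ring are $J$-adically complete and separated. The main obstacle is the finite generation claim for $\invlim_n N_n$; everything else is a straightforward bookkeeping check.
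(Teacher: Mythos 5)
Your route is genuinely different from the paper's. The paper does not pass through $\check{\Cscr}$ at all: it shows directly that ${}^p\!\hat{R}={}^p\!\invlim_n R/J^n$ is a projective generator of $\mod(R)\,\hat{}$ with endomorphism ring $\hat{R}$ (projectivity amounts to exactness of $M\mapsto\invlim_n M/MJ^n$, extracted from the Artin--Rees Lemma \ref{ref-2.2.8-10}; the generator property comes from lifting generators of $M/MJ$ and Nakayama), and then concludes by the Morita/Gabriel--Mitchell principle. Your reduction via Proposition \ref{ref-2.2.6-8} to the classical dictionary between $J$-adic inverse systems of finitely generated modules and finitely generated $\hat{R}$-modules is legitimate and arguably more transparent, since that dictionary is standard commutative algebra (EGA $0_{\mathrm{I}}$ 7.2, Atiyah--Macdonald 10.13); what it buys is that the only genuinely new point to check is compatibility with the pro-object formalism, whereas the paper's argument is self-contained but leans on the abstract recognition of module categories.

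One place where you must be more careful: you set $M=\invlim_n N_n$ ``viewed as an $\hat{R}$-module'' and then assert $M/MJ^n=N_n$ ``in $\Pro(\mod(R))$'' by appeal to Proposition \ref{ref-2.2.6-8}. That proposition computes with the inverse limit taken \emph{in} $\Pro(\mod(R))$ (what the paper's own proof is careful to denote ${}^p\!\invlim$), and that object is not literally an $\hat{R}$-module; the object you feed to $\mod(\hat{R})$ is the classical limit in $\Mod(R)$. For the classical limit, surjectivity of $M/MJ^n\r N_n$ follows from Mittag--Leffler as you indicate, but injectivity --- i.e.\ that $\ker(M\r N_n)=\invlim_{m\ge n}N_mJ^n$ is no larger than $MJ^n$ --- is exactly the nontrivial content of the classical statement and requires Artin--Rees again (or the surjection $\hat{R}^k\r M$ you construct, plus the diagram chase of EGA $0_{\mathrm{I}}$ 7.2.9). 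Your finite-generation argument is fine. So the proof is correct in outline; you should separate the two limits notationally and either supply or explicitly cite the injectivity step.
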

\begin{proof} In the proof we must distinguish between inverse limits
  in $\Mod(R)$ and $\Pro(\mod(R))$. Therefore we will temporarily
  denote the latter by ${}^p\!\invlim$.

Let ${}^p\!\hat{R}$ be the object of $\mod(R)\,\hat{}$ given by ${}^p\!\invlim_n
R/J^n$. Its endomorphism ring is equal to $\invlim_n R/J^n=\hat{R}$. It
suffices to prove that ${}^p\!\hat{R}$ is a projective generator of
$\mod(R)\,\hat{}$.

We first show that ${}^p\!\hat{R}$ is projective. Let $M\in \mod(R)\,\hat{}$. Then
\begin{align*}
\Hom_{\Pro(\mod(R))}({}^p\!\hat{R},M)&=\invlim_n\Hom_{R/J^n}(R/J^n,M/MJ^n)\\
&=
\invlim_n M/MJ^n
\end{align*}
Hence we must prove that $M\mapsto \invlim_n M/MJ^n$ is exact. Now let 
\[
0\r K\r M\r N\r 0
\]
be an exact sequence in $\mod(R)\,\hat{}$. By lemma \ref{ref-2.2.8-10} below we have that
\[
0\r K/KJ^n \r M/MJ^n\r N/NJ^n\r 0
\]
is exact up to essentially zero systems. From this one easily deduces that
its inverse limit is exact. 

Now we prove that ${}^p\!\hat{R}$ is a generator. Let $M$ be a object of
$\mod(R)\,\hat{}$. Choose $R/J$-generators for $M/JM\in \mod(R/J)$
and lift those to $R/J^n$-generators for $M/MJ^n\in\mod(R/J^n)$. By Nakayama we
get compatible epimorphisms $(R/J^n)^t\r M/MJ^n$ for some fixed $t$.

Taking inverse limits we obtain an epimorphism
${}^p\!\hat{R}^t\r M$
and we are done. 
\end{proof}
The following lemma was used. 
\begin{lemmas} \label{ref-2.2.8-10} Assume that $\Dscr$ is an $R$-linear abelian
category and  $M\subset N$ is an inclusion of noetherian objects in
  $\Dscr$. Then these objects satisfy the Artin-Rees condition in the sense
that there exists an $r$ such that for all $i$ we have
  $NJ^{n+r}\cap M\subset MJ^n$.
\end{lemmas}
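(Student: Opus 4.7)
The plan is to emulate the standard Artin--Rees argument via the Rees algebra, but carried out inside $\Dscr$ using Lemma~\ref{ref-2.2.2-3}. Since $R$ is noetherian, the ideal $J$ is finitely generated, so the Rees algebra $B := \bigoplus_{n\ge 0} J^n \subset R[t]$ is a positively graded noetherian $R$-algebra with $B_0 = R$.

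First I would apply Lemma~\ref{ref-2.2.2-3} to the noetherian object $N$ and the algebra $B$ to conclude that $N \otimes_R B$ is a noetherian graded $(B,\Dscr)$-object. The multiplication maps $N \otimes_R J^n \to N$ have images $NJ^n$, and assembled in each degree they give a graded epimorphism
\[
N \otimes_R B \;\twoheadrightarrow\; \tilde{N} \;:=\; \bigoplus_{n\ge 0} NJ^n,
\]
so $\tilde{N}$ is itself a noetherian graded $(B,\Dscr)$-object.

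Next I would verify that $\tilde{M} := \bigoplus_{n\ge 0}(NJ^n \cap M)$ is a graded $B$-subobject of $\tilde{N}$: the inclusion in each degree is clear, and stability under multiplication by $B_k = J^k$ follows because $M$ is $R$-stable (as a subobject in an $R$-linear category), so $(NJ^n \cap M)J^k \subset NJ^{n+k} \cap M$. By the noetherian property, $\tilde{M}$ is finitely generated over $B$, hence there exists some $r$ such that $\tilde{M}$ is generated in degrees $\le r$. Explicitly this means that for every $n \ge r$,
\[
NJ^n \cap M \;=\; \sum_{i=0}^{r} (NJ^i \cap M)\,J^{n-i} \;\subset\; M\,J^{n-r},
\]
which after reindexing ($n \mapsto n+r$) gives $NJ^{n+r} \cap M \subset MJ^n$ for all $n \ge 0$, as desired.

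The only genuine subtlety is making sure the Rees construction makes sense for abstract objects in $\Dscr$ rather than for modules over a ring: i.e., that $NJ^n$, $NJ^n \cap M$, and their graded assemblies carry compatible $B$-actions. Once one sets this up cleanly by pushing the $B$-action across the epimorphism $N \otimes_R B \twoheadrightarrow \tilde{N}$ and observing the stability above, the rest of the argument is formal and mirrors the classical Artin--Rees proof.
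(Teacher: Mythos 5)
Your proposal is correct and follows essentially the same route as the paper: both pass to the Rees algebra $\tilde R=\bigoplus_{n\ge 0}J^n$, invoke Lemma~\ref{ref-2.2.2-3} to see that $\bigoplus_i NJ^i$ (and hence its graded subobject $\bigoplus_i (NJ^i\cap M)$) is a noetherian graded object, and then extract $r$ from a stabilizing ascending chain of graded subobjects. The only cosmetic difference is the concluding step: you phrase it as generation in degrees $\le r$, whereas the paper uses stationarity of the chain $C^{(r)}=\bigoplus_i NJ^i\cap MJ^{i-r}$, which sidesteps having to formalize ``finitely generated'' for graded objects of an abstract abelian category.
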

\begin{proof}
  This is proved in the standard way. 
Let $\tilde{R}=\bigoplus_{n\ge
    0} J^n$ be the Rees ring of~$R$

The graded ring $\tilde{R}$ is finitely generated over $R$ and it follows from
Lemma \ref{ref-2.2.2-3} that $\bigoplus_i NJ^i$ is a noetherian graded object
  over $(\tilde{R},\Dscr)$.
  Hence so is $C=\bigoplus_i NJ^i\cap M$. 

  Inside $C$ we have an ascending chain of subobjects
  $C^{(r)}=\bigoplus_i NJ^i \cap MJ^{i-r}$ (with $J^p=R$ for $p\le
  0$) which must be stationary. Hence for a certain $r$ we have for any $i$:
$NJ^i \cap MJ^{i-r}=NJ^i \cap MJ^{i-r-1}=\cdots=NJ^i\cap M$. Putting $i=n+r$ yields
$MJ^{n}\supset NJ^{n+r}\cap MJ^n=NJ^{n+r}\cap M$.
\end{proof}
There is a canonical functor
\begin{equation}
\label{ref-2.4-11}
\Phi:\Cscr\r \widehat{\Cscr}:M\mapsto \invlim_n(M/MJ^n)
\end{equation}
\begin{propositions}
 \label{ref-2.2.9-12}
The functor $\Phi$ introduced above is exact. It induces an equivalence
\[
\Cscr_{R/J^n}\cong (\widehat{\Cscr})_{R/J^n}
\]
\end{propositions}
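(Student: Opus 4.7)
The plan is to handle exactness of $\Phi$ first, then use its behavior on torsion objects to read off the equivalence.

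Since $\widehat{\Cscr}$ is a full abelian subcategory of $\Pro(\Cscr)$ by Proposition \ref{ref-2.2.4-6}, to show $\Phi$ is exact it suffices to apply $\invlim_n$ to the quotient systems in $\Pro(\Cscr)$ and verify short exactness there. Starting from $0 \to K \to M \to N \to 0$ in $\Cscr$, I would split the analysis into two short exact sequences in $\Cscr$ at each level $n$:
\[
0 \to (K+MJ^n)/MJ^n \to M/MJ^n \to N/NJ^n \to 0
\]
and
\[
0 \to (K\cap MJ^n)/KJ^n \to K/KJ^n \to (K+MJ^n)/MJ^n \to 0.
\]
Both assemble into short exact sequences of inverse systems, and since filtered inverse limits are exact in $\Pro(\Cscr)$ (because $\Pro(\Cscr)^{\text{opp}}$ is Grothendieck), applying $\invlim_n$ yields two short exact sequences in $\Pro(\Cscr)$. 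The crucial observation is that $\invlim_n (K\cap MJ^n)/KJ^n = 0$: by the Artin--Rees lemma (Lemma \ref{ref-2.2.8-10}) there exists $r$ with $K\cap MJ^{n+r} \subseteq KJ^n$ for all $n$, so the transition maps of the pro-system $((K\cap MJ^n)/KJ^n)_n$ become zero after shifting by $r$, and the pro-Hom formula \eqref{ref-2.1-2} forces such a system to be zero in $\Pro(\Cscr)$. Consequently $\Phi(K) \to \invlim_n (K+MJ^n)/MJ^n$ is an isomorphism, and combining with the first sequence above gives exactness of $0 \to \Phi(K) \to \Phi(M) \to \Phi(N) \to 0$.

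For the equivalence $\Cscr_{R/J^n} \cong (\widehat{\Cscr})_{R/J^n}$, suppose $M \in \Cscr_{R/J^n}$. Then $MJ^k = 0$ for $k \ge n$, so $(M/MJ^k)_k$ is cofinally constant equal to $M$ and $\Phi(M) \cong M$ in $\Pro(\Cscr)$; in particular $\Phi(M)$ is annihilated by $J^n$, so $\Phi$ restricts to a functor $\Cscr_{R/J^n} \to (\widehat{\Cscr})_{R/J^n}$ which is fully faithful because the inclusion $\Cscr \hookrightarrow \Pro(\Cscr)$ is fully faithful. Conversely, given $N \in (\widehat{\Cscr})_{R/J^n}$, the defining property gives $N = \invlim_k N/NJ^k$, and since $NJ^k \subseteq NJ^n = 0$ for $k \ge n$ this inverse limit equals $N/NJ^n \in \Cscr_{R/J^n}$; hence $N \cong \Phi(N/NJ^n)$, providing essential surjectivity with inverse functor $N \mapsto N/NJ^n$.

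The only real obstacle is the left-exactness step: one has to recognize that the kernel system $(K\cap MJ^n)/KJ^n$, while nonzero level-wise, is pro-zero by Artin--Rees and hence disappears after passage to $\Pro(\Cscr)$. Once that vanishing is secured, both the right exactness (from right exactness of $-\otimes_R R/J^n$) and the torsion-subcategory equivalence follow formally from the definition of $\widehat{\Cscr}$ and full faithfulness of $\Cscr \hookrightarrow \Pro(\Cscr)$.
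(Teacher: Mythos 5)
Your proof is correct and follows essentially the same route as the paper: the paper likewise deduces exactness from the Artin--Rees condition (Lemma \ref{ref-2.2.8-10}) by observing that $0\to K/KJ^n\to M/MJ^n\to N/NJ^n\to 0$ is exact up to essentially zero systems and then taking exact filtered inverse limits in $\Pro(\Cscr)$, and it dismisses the second statement as a formal tautology, which is exactly what your last paragraph writes out. Your explicit identification of the error term as the pro-zero system $((K\cap MJ^n)/KJ^n)_n$ is just a more detailed rendering of the same argument.
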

\begin{proof}
Exactness  is a consequence of Lemma \ref{ref-2.2.8-10}. It is similar to the
  proof of exactness of $M\mapsto \invlim_n M/MJ^n$ in the proof of
  Proposition \ref{ref-2.2.7-9}.

The second statement is a tautology when written out formally. 
\end{proof}
\begin{lemmas}\label{ref-2.2.10-13} Let $\Phi:\Cscr\r \Dscr$ be a functor between $R$-linear noetherian abelian categories
which induces equivalences $\Phi_n:\Cscr_{R/J^n}\r \Dscr_{R/J^n}$. Then
\[
\widehat{\Phi}:\widehat{\Cscr}\r \widehat{\Dscr}:M\mapsto \invlim_n \Phi_n(M/MJ^n)
\]
is an equivalence. 
\end{lemmas}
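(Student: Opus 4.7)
The plan is to reduce to the category of $J$-adic inverse systems via Proposition \ref{ref-2.2.6-8} and then exhibit the desired equivalence componentwise. Under the equivalences $\Sigma:\widehat{\Cscr}\xrightarrow{\sim}\check{\Cscr}$ and $\Psi:\check{\Dscr}\xrightarrow{\sim}\widehat{\Dscr}$ supplied by that proposition, the functor $\widehat{\Phi}$ as defined factors as $\Psi\circ\check{\Phi}\circ\Sigma$, where $\check{\Phi}:\check{\Cscr}\r\check{\Dscr}$ sends $(M_n,\phi_n)_n$ to $(\Phi_n(M_n),\Phi(\phi_n))_n$. Hence it suffices to show that $\check{\Phi}$ is an equivalence.

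First I would verify that $\check{\Phi}$ is well defined, i.e.\ that the image of a $J$-adic inverse system is again such a system. The condition $\Phi_n(M_n)J^n=0$ is automatic since $\Phi_n$ takes values in $\Dscr_{R/J^n}$. The remaining condition is that the transition map induces an isomorphism $\Phi_n(M_n)/\Phi_n(M_n)J^{n-1}\xrightarrow{\sim}\Phi_{n-1}(M_{n-1})$. This reduces to the key compatibility $\Phi_n(M_nJ^{n-1})=\Phi_n(M_n)J^{n-1}$: since $\Phi$ is $R$-linear and $\Phi_n$ is exact (being an equivalence of abelian categories), and since $M_nJ^{n-1}$ is the image of the morphism $\bigoplus_j M_n\r M_n$ induced by multiplication by a finite generating set of elements $j$ of $J^{n-1}$, the identity follows. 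Applying the exact functor $\Phi_n$ to $0\r M_nJ^{n-1}\r M_n\r M_n/M_nJ^{n-1}\r 0$ and using the hypothesis $M_n/M_nJ^{n-1}\cong M_{n-1}$ then yields the required isomorphism in $\Dscr$.

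Finally, the quasi-inverses $\Psi_n:\Dscr_{R/J^n}\r\Cscr_{R/J^n}$ of $\Phi_n$ assemble into a componentwise functor $\check{\Psi}:\check{\Dscr}\r\check{\Cscr}$ by the same recipe (the same well-definedness argument applies verbatim), and the natural isomorphisms $\Phi_n\Psi_n\cong\id$, $\Psi_n\Phi_n\cong\id$ are compatible with the transition maps and therefore patch together into natural isomorphisms $\check{\Phi}\check{\Psi}\cong\id$ and $\check{\Psi}\check{\Phi}\cong\id$. The main obstacle is the well-definedness check above, specifically the interaction of $\Phi$ with the formation of $MJ^{n-1}$; once this is settled, everything else is formal.
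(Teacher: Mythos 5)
Your proof is correct and follows essentially the same route as the paper: the paper's one-line proof simply asserts well-definedness and exhibits the inverse $N\mapsto \invlim_n \Phi_n^{-1}(N/NJ^n)$, which is exactly your composite $\Psi\circ\check{\Psi}\circ\Sigma$ through the category of $J$-adic inverse systems of Proposition \ref{ref-2.2.6-8}. Your identification of the key point --- that exactness and $R$-linearity of the equivalences $\Phi_n$ give $\Phi_n(M_nJ^{n-1})=\Phi_n(M_n)J^{n-1}$, so that $J$-adic systems are sent to $J$-adic systems --- is precisely the content hidden behind the paper's ``one easily checks''.
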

\begin{proof} One easily checks that $\widehat{\Phi}$ is well defined and that its 
inverse is given by $\widehat{\Phi}^{-1}(N)=\invlim_n \Phi_n^{-1}(N/NJ^n)$. 
\end{proof}
\begin{definitions} A noetherian $R$-linear abelian category $\Cscr$ is \emph{complete}
if the functor $\Phi:\Cscr\r \widehat{\Cscr}$ is an equivalence.
\end{definitions}
\begin{propositions} $\widehat{\Cscr}$ is complete.
\end{propositions}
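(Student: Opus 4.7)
The plan is to reduce the statement to the already-established equivalence $\Sigma: \widehat{\Cscr} \simeq \check{\Cscr}$ (Proposition \ref{ref-2.2.6-8}), applied both to $\Cscr$ and to $\widehat{\Cscr}$. Since $\widehat{\Cscr}$ is a noetherian $\hat{R}$-linear abelian category by Proposition \ref{ref-2.2.4-6}, Proposition \ref{ref-2.2.6-8} applies to $\widehat{\Cscr}$ as well, yielding an equivalence
\[
\Sigma: \widehat{\widehat{\Cscr}} \xrightarrow{\sim} \check{\widehat{\Cscr}}: N \mapsto (N/NJ^n)_n.
\]
So the task becomes to identify $\check{\widehat{\Cscr}}$ with $\check{\Cscr}$ and to check that the resulting composite is a quasi-inverse to the canonical completion functor $\Phi_{\widehat{\Cscr}}: \widehat{\Cscr} \to \widehat{\widehat{\Cscr}}$.

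For the identification $\check{\widehat{\Cscr}}\simeq \check{\Cscr}$, note that an object of $\check{\widehat{\Cscr}}$ is a sequence $(N_n)_n$ with $N_n\in \widehat{\Cscr}$ and $N_nJ^n=0$; thus each $N_n$ lies in $(\widehat{\Cscr})_{R/J^n}$. By Proposition \ref{ref-2.2.9-12}, the functor $\Phi:\Cscr\to \widehat{\Cscr}$ induces an equivalence $\Cscr_{R/J^n}\simeq (\widehat{\Cscr})_{R/J^n}$, so each $N_n$ lifts canonically and functorially to an object $M_n\in\Cscr_{R/J^n}$, and the transition maps $N_n\to N_{n-1}$ transport to transition maps $M_n\to M_{n-1}$ inducing $M_n/M_nJ^{n-1}\cong M_{n-1}$. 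This gives $(M_n)_n\in\check{\Cscr}$ and an equivalence $\check{\widehat{\Cscr}}\simeq \check{\Cscr}$. Composing with the quasi-inverse $\Psi:\check{\Cscr}\to\widehat{\Cscr}$ from Proposition \ref{ref-2.2.6-8} produces an equivalence $\widehat{\widehat{\Cscr}}\xrightarrow{\sim}\widehat{\Cscr}$.

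Finally, I would verify this equivalence is a quasi-inverse to $\Phi_{\widehat{\Cscr}}$ by tracing $M\in\widehat{\Cscr}$ through the constructions: $\Phi_{\widehat{\Cscr}}(M)=\invlim_n M/MJ^n$ (limit in $\Pro(\widehat{\Cscr})$), applying $\Sigma$ yields the system $(M/MJ^n)_n$ in $\check{\widehat{\Cscr}}$; since $M\in\widehat{\Cscr}$ forces $M/MJ^n\in\Cscr_{R/J^n}$ already, the identification $\check{\widehat{\Cscr}}\simeq\check{\Cscr}$ leaves it unchanged, and $\Psi$ returns $\invlim_n M/MJ^n=M$ by definition of $\widehat{\Cscr}$.

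The only substantive point is the bookkeeping in the second paragraph: one must check that the equivalence $\Cscr_{R/J^n}\simeq (\widehat{\Cscr})_{R/J^n}$ is compatible with the $\ZZ$-indexed diagrams appearing in $\check{\widehat{\Cscr}}$ and $\check{\Cscr}$, i.e.\ that the tower structure (in particular the cofinality of the $J$-adic filtration on each level) is preserved; this is a formality given Proposition \ref{ref-2.2.9-12} but is the only step where one could make an error. Everything else is direct application of the earlier propositions.
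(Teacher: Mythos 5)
Your argument is correct, and it rests on the same key input as the paper's proof, namely the equivalence $\Cscr_{R/J^n}\cong(\widehat{\Cscr})_{R/J^n}$ of Proposition \ref{ref-2.2.9-12}; the only difference is how the levelwise identifications are assembled into an equivalence of completions. The paper does this in one line by feeding Proposition \ref{ref-2.2.9-12} into Lemma \ref{ref-2.2.10-13}, which says precisely that a functor inducing equivalences on all the subcategories $(-)_{R/J^n}$ induces an equivalence of completions, and whose explicit formula $M\mapsto\invlim_n\Phi_n(M/MJ^n)$ is visibly the canonical functor $\Phi_{\widehat{\Cscr}}$ in this case. You instead re-derive that assembly step by passing through the $J$-adic inverse systems of Proposition \ref{ref-2.2.6-8}, applied to both $\Cscr$ and $\widehat{\Cscr}$ (legitimately, since Proposition \ref{ref-2.2.4-6} makes $\widehat{\Cscr}$ a noetherian abelian category), and identifying $\check{\widehat{\Cscr}}$ with $\check{\Cscr}$ levelwise. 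This is a valid repackaging of the same content, at the cost of the extra bookkeeping you flag at the end: checking compatibility of the levelwise equivalences with the tower structure and that the resulting composite agrees with $\Phi_{\widehat{\Cscr}}$. Invoking Lemma \ref{ref-2.2.10-13} directly would have absorbed all of that.
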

\begin{proof} This follows from the second statement of Proposition \ref{ref-2.2.9-12} combined
with Lemma \ref{ref-2.2.10-13}. 
\end{proof}
For completeness let us recall the following result
\begin{lemmas} \label{ref-2.2.13-14} (Nakayama) Let $M\in \widehat{\Cscr}$ be
    such that $MJ=M$. Then $M=0$.
\end{lemmas}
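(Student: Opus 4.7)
The plan is to exploit the defining property of $\widehat{\Cscr}$, namely that every object is canonically the inverse limit of its reductions modulo powers of $J$, and then observe that the hypothesis $MJ=M$ forces all those reductions to vanish.

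First I would unpack the hypothesis. For $M\in\widehat{\Cscr}$, the subobject $MJ\subset M$ makes sense (e.g.\ as the image of the morphism $M^{\oplus p}\to M$ coming from any finite generating set of $J$, which is the same as the kernel of the canonical quotient $M\to M/MJ$); the equality $MJ=M$ is therefore equivalent to $M/MJ=0$. Multiplying this equality by $J^{n-1}$ (or iterating) yields $MJ^n=(MJ)J^{n-1}=MJ^{n-1}=\cdots=M$, so that $M/MJ^n=0$ for every $n\ge 1$.

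Second, I would invoke the definition of $\widehat{\Cscr}$: the canonical map $M\to \invlim_n M/MJ^n$ is an isomorphism. The inverse limit on the right (computed in $\Pro(\Cscr)$, where filtered inverse limits are exact by Lemma 2.2.1) is the inverse limit of the zero system, which is zero. Hence $M=0$.

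There is essentially no obstacle here; the only subtlety is the purely formal one of making sure that $MJ$ and $MJ^n$ are interpreted as subobjects of $M$ in $\Pro(\Cscr)$ (so that $M/MJ^n$ has its intended meaning as the cokernel of an inclusion) and that Proposition \ref{ref-2.2.6-8} or equivalently the defining condition of $\widehat{\Cscr}$ identifies $M$ with the pro-object $(M/MJ^n)_n$. Once this is in place the proof collapses to the three lines above.
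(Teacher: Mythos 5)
Your proof is correct and is exactly the paper's argument: the paper's proof reads, in full, ``We have $M=\invlim_n M/MJ^n=0$ since $M=MJ=MJ^2=\cdots$.'' Your version merely spells out the same two steps (iterating $MJ=M$ to get $M/MJ^n=0$, then invoking the defining isomorphism $M\cong\invlim_n M/MJ^n$) in more detail.
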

\begin{proof} We have $M=\invlim_n M/MJ^n=0$ since $M=MJ=MJ^2=\cdots$. 
\end{proof}
\subsection{Functors}
Now we consider functors. Let $T=(T^i)_i$ be a $\partial$-functor
between~$R$-linear noetherian abelian categories $\Cscr$ and
$\Dscr$. We extend $T$ to a $\partial $-functor~$\hat{T}$ commuting
with filtered inverse limits between $\Pro(\Cscr)$ and $\Pro(\Dscr)$.

 In this section we prove the following strengthening of \cite[Prop.\ 5.3.1]{Joua}.
\begin{theorems} \label{ref-2.3.1-15} The functor $\hat{T}$ sends $\widehat{\Cscr}$ to $\widehat{\Dscr}$. 
\end{theorems}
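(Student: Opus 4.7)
The plan is to verify, for each $M \in \widehat{\Cscr}$ and each $i \geq 0$, that $\hat{T}^i(M) \in \widehat{\Dscr}$ by invoking Lemma \ref{ref-2.2.3-4}. This requires equipping $\hat{T}^i(M)$ with a $J$-adapted filtration $F$ satisfying $\hat{T}^i(M) = \invlim_n \hat{T}^i(M)/F_n$ and such that $\gr_F \hat{T}^i(M)$ is a noetherian graded $(\gr_J R, \Dscr_{R/J})$-object.

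The natural candidate is
\[
F_n \hat{T}^i(M) := \ker\bigl(\hat{T}^i(M) \to T^i(M/MJ^n)\bigr),
\]
which, by the long exact sequence of $\hat{T}$ applied to $0 \to MJ^n \to M \to M/MJ^n \to 0$ (noting $\hat{T}^i(M/MJ^n) = T^i(M/MJ^n)$ since $M/MJ^n \in \Cscr$), coincides with $\im(\hat{T}^i(MJ^n) \to \hat{T}^i(M))$. That $F$ is $J$-adapted is immediate: multiplication by any $r \in J$ on $MJ^n$ factors through $MJ^{n+1} \hookrightarrow MJ^n$, so $F_n \cdot J \subset F_{n+1}$. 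Moreover, since $\hat{T}^i$ commutes with filtered inverse limits, $\hat{T}^i(M) = \invlim_n T^i(M/MJ^n)$; applying the exact inverse limit in $\Pro(\Dscr)$ to $0 \to F_n \to \hat{T}^i(M) \to T^i(M/MJ^n)$ gives $\invlim_n F_n = 0$, and therefore $\hat{T}^i(M) = \invlim_n \hat{T}^i(M)/F_n$.

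The main remaining step, and the one I expect to be the chief obstacle, is to show $\gr_F \hat{T}^i(M)$ is a noetherian graded $(\gr_J R, \Dscr_{R/J})$-object. A diagram chase in the long exact sequences of $\hat{T}$ applied to $0 \to MJ^{n+1} \to MJ^n \to MJ^n/MJ^{n+1} \to 0$ shows each $F_n/F_{n+1}$ is a subquotient of $T^i(MJ^n/MJ^{n+1})$, and functoriality assembles these subquotients into a graded $(\gr_J R, \Dscr_{R/J})$-object. From the proof of Proposition \ref{ref-2.2.4-6}, $\gr_{F_J} M$ is a quotient of $(M/MJ) \otimes_{R/J} \gr_J R$, hence generated in degree zero by the noetherian object $M/MJ \in \Cscr_{R/J}$. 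The aim is to transport this finite generation through $T^i$, producing via Lemma \ref{ref-2.2.2-3} a noetherian graded $(\gr_J R, \Dscr_{R/J})$-cover built from $T^i(M/MJ)$ that dominates $\gr_F \hat{T}^i(M)$. The subtlety is that $T^i$ need not preserve quotients or tensor products in isolation; one likely must invoke the full $\partial$-structure of $T$, which is presumably also why Jouanolou's extra hypotheses become superfluous here.

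Once noetherianness of $\gr_F \hat{T}^i(M)$ is in hand, Lemma \ref{ref-2.2.3-4} immediately yields $\hat{T}^i(M) \in \widehat{\Dscr}$, completing the proof.
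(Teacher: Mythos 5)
Your setup is the right one and matches the paper's: you put the filtration $F_k\hat{T}^i(M)=\im(\hat{T}^i(MJ^k)\to\hat{T}^i(M))$ on $\hat{T}^i(M)$, check it is $J$-adapted and exhaustive in the sense that $\hat{T}^i(M)=\invlim_k\hat{T}^i(M)/F_k$, and aim to conclude by Lemma \ref{ref-2.2.3-4}. But the step you flag as ``the chief obstacle'' is indeed the whole content of the theorem, and your sketch of how to fill it does not work. First, knowing that each $F_k/F_{k+1}$ is \emph{degreewise} a subquotient of $T^i(MJ^k/MJ^{k+1})$ is not enough: noetherianness of a graded $(\gr_J R,\Dscr_{R/J})$-object is not a degreewise condition, so you must exhibit $\gr_F\hat{T}^i(M)$ as a subquotient of $T^i(\gr_{F_J}M)$ by \emph{graded} subobjects compatible with the $\gr_J R$-action, and moreover these subobjects involve an infinite intersection (a $Z_\infty$) which a priori only lives in $\Pro(\Dscr)$. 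Second, your proposed route --- building a noetherian graded cover from $T^i(M/MJ)\otimes\gr_J R$ that ``dominates'' $\gr_F\hat{T}^i(M)$ --- cannot succeed as stated, precisely because $T^i$ is not right exact: generation of $\gr_{F_J}M$ in degree zero does not transport through $T^i$ to give an epimorphism onto anything.

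The paper supplies the two missing ingredients. (i) Lemma \ref{ref-2.3.2-17}: for a $\partial$-functor $(T^i)_i$ and a noetherian graded $(S,\Escr)$-object $N$ over a finitely generated graded algebra $S$, the graded object $\bigoplus_n T^i(N_n)$ is again noetherian; this is proved by induction on the number of algebra generators of $S$ using the long exact sequences, and applied to $N=\gr_{F_J}M$ (noetherian by Lemma \ref{ref-2.2.2-3}) it shows $E_1^i=T^i(\gr_{F_J}M)$ is a noetherian graded $(\gr_J R,\Dscr_{R/J})$-object. (ii) The spectral sequence of the filtered object $M$, converging to $\gr_F\hat{T}^i(M)$: since the boundaries $B^n_r$ form an ascending chain of graded subobjects of the noetherian $E_1^n$, they stabilize, which by the exact sequence \eqref{ref-2.5-16} forces $d_r=0$ for $r\gg 0$; the resulting degeneration identifies $\gr_F\hat{T}^i(M)$ with $Z^n_{r_0}/B^n_{r_0}$, a subquotient of $E_1^n$ by graded $(\gr_J R,\Dscr_{R/J})$-subobjects lying in $\Dscr$, hence noetherian. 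Without the degeneration argument you cannot control $Z_\infty$, and without Lemma \ref{ref-2.3.2-17} you have no noetherian graded object to start from. So the proposal, while correctly oriented, has a genuine gap at its central step.
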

\begin{proof} This is a variant of \cite[Prop.\ 5.3.1]{Joua}. For the convenience
of the reader we adapt the proof in loc.\ cit.\ to our setting.

We need some rudiments from the foundation of the theory of spectral sequences. 
In its abstract form a spectral sequence over an abelian category $\Escr$ is a
sequence of complexes $\mathbb{E}=(E^\bullet_r,d_r)_{r\ge 1}$ together with isomorphisms
$H^\bullet(E_r^\bullet,d)\cong (E_{r+1}^\bullet,0)$. If the terms of the complexes
$E^\bullet_r$ carry a grading then we assume that $d_r$ is homogeneous. Note that this
setup is shifted with respect to the usual indexing of
spectral sequences. This is more convenient for filtered objects. 

Starting from spectral sequence $\mathbb{E}$ we may construct subobjects
\[
0=B_1^n\subset\cdots \subset B^n_r\subset\cdots \subset Z_r^n\subset \cdots \subset Z_1^n=E^n_1
\]
with $E^n_{r}=Z_r^n/B_r^n$. The subobjects $B^n_r,Z^n_r$ are constructed 
recursively using the following exact sequences. 
\begin{equation}
\label{ref-2.5-16}
0\r Z_{r+1}^n/B_r^n\r Z_r^n/B_r^n\xrightarrow{d_r^n} Z_r^{n+1}/B_r^{n+1}\r
Z_{r}^{n+1}/B_{r+1}^{n+1}\r 0
\end{equation}
If $Z_\infty^n=\invlim_r Z^n_r$, $B_\infty^n=\dirlim_r B^n_r$  exist then
we say that $\mathbb{E}$ \emph{converges} to $E_\infty^n=Z^n_\infty/B^n_\infty$. The graded object
$E^\bullet_\infty$ is called the limit of the spectral sequence. 

The spectral sequence is said to \emph{degenerate} at $E^n_{r_0}$ if $d^n_r=0$, $d^{n-1}_{r}=0$
for $r\ge r_0$.
In that case it follows from \eqref{ref-2.5-16} that $B^n_{r+1}=B^n_{r}$, $Z^n_{r+1}=Z^n_{r}$ for $r\ge r_0$ and thus
$E_\infty^n$ exists and is equal to $E_{r_0}^n$.

If $(T^n)_n:\Escr\r \Fscr$ is a $\partial$-functor between abelian
categories and $X\in\Escr$ is an object equipped with a descending
filtration $F_{k+1}X\subset F_kX$ indexed by $\ZZ$ then the method of
exact couples yields a spectral sequence starting with
$E^n_1=T^n(\gr_F X)$. Here $ T^n(\gr_F
X)\overset{\text{def}}{=}\bigoplus_k T^n(F_kX/F_{k+1}X)$ is viewed as
a $\ZZ$-graded object over $\Fscr$ (through the $k$-index).  Hence
this spectral sequence lives in the abelian category of $\ZZ$-graded objects
over $\Fscr$. The expressions for $Z^n_r$ and $B^n_r$ are
\begin{align*}
Z^n_r&=\bigoplus_k\ker(T^n(F_kX/F_{k+1}X)\r T^{n+1}(F_{k+1}X/F_{k+r}X))\\
B^n_r&=\bigoplus_k \im(T^{n-1}(F_{k-r+1}X/F_{k}X)\r T^n(F_kX/F_{k+1}X))
\end{align*}
We now make a number of hypotheses.
\begin{enumerate}
\item $\Escr$, $\Fscr$ are complete with exact filtered limits. 
\item $(T^n)_n$ commutes with filtered limits.
\item We have $X=F_kX$ for $k\ll 0$. 
\item $X$ is complete. I.e.\ $X=\invlim_k X/F_kX$. 
\end{enumerate}
We note that limits and colimits on graded objects can be computed degreewise. Hence
$Z_\infty^n$ exists and is equal to
\[
Z^n_\infty=\bigoplus_k\ker(T^n(F_kX/F_{k+1}X)\r T^{n+1}(F_{k+1}X))\\
\]
Similarly $B_\infty^n$ exists and is equal to
\[
B^n_\infty=\bigoplus_k \im(T^{n-1}(X/F_kX)\r T^n(F_kX/F_{k+1}X))
\]
It is now well-known and an easy verification that
\[
Z^n_\infty/B^n_\infty=\bigoplus_k \frac{\im(T^n(F_k X)\r T^nX)}{\im(T^n(F_{k+1} X)\r T^n X)}
\]
In other words if we equip $T^nX$ with the filtration $F_k(T^nX)=\im (T^n(F_kX)\r T^nX)$
then $E_\infty^n\cong \gr_F T^n X$. Note that the conditions also imply
that $T^nX$ is complete for this filtration. Indeed 
\begin{align*}
\invlim_k T^nX/F_k T^n X&=\invlim_k\coker (T^n(F_kX)\r T^nX)\\
&=\coker( T^n(\invlim_k F_kX)\r T^nX)\\
&=T^nX
\end{align*}
Now revert to the notations in the statement of the proposition. We
apply the previous discussion with $\Escr=\Pro(\Cscr)$,
$\Fscr=\Pro(\Dscr)$ and $X=M$.  We equip $M$ with the $J$-adic
filtration. By the above discussion we get a spectral sequence
$\mathbb{E}$ with $E^n_1=T^n(\gr_{F_J} M)$ which converges to $\gr_F
\hat{T}^n(M)$.  The terms occuring in this spectral sequence are graded
$(\gr_J R,\Dscr_{R/J})$ objects. The limit is a priori only a $(\gr_J
R,(\Pro\Dscr)_{R/J})$-object.

By Lemma \ref{ref-2.2.2-3} $\gr_J M$ is a noetherian $(\gr_J R,\Cscr_{R/J})$ object.
Hence by Lemma \ref{ref-2.3.2-17} below $T^n(\gr_J M)$ is a noetherian graded $(\gr_J R,\Dscr_{R/J})$-object.
Hence the ascending chain $B_r^n$ must be stationary. By \eqref{ref-2.5-16}
we obtain $d^n_r=0$ for $r\gg 0$. Hence~$\mathbb{E}$ degenerates at $E^n_r$ for $r\gg 0$.
It follows that $E_\infty^n=\gr_F \hat{T}^n(M)$ is a noetherian graded $(\gr_J R,\Dscr_{R/J})$-object.

Since we had already shown that $\hat{T}^n(M)$
is complete we conclude by Lemma \ref{ref-2.2.3-4}.
\end{proof}
\begin{lemmas} \label{ref-2.3.2-17} Let $S_0$ be a commutative noetherian ring and let $S$ be a
  finitely generated positively commutative graded $S_0$-algebra whose part of
  degree zero is $S_0$.  Let $(T^i)_i$ be a $\partial$ functor between
  noetherian abelian categories $S_0$-linear categories $\Escr$,
  $\Fscr$. Then for any noetherian graded $(S,\Escr)$-object $N$ and
  for any $i$ we have that $T^i(N)\overset{\text{def}}{=}\bigoplus_n
  T^i(N_n)$ is a noetherian graded $(S,\Fscr)$-object.
\end{lemmas}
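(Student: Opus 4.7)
The plan is to reduce to $S = S_0[x_1,\ldots,x_k]$ and induct on $k$, with the inductive step combining a short diagram chase through the two long exact sequences obtained from multiplication by $x_k$ on $N$ with a Hilbert-basis-style argument built on Lemma~\ref{ref-2.2.2-3}.

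Reduction is immediate: any $S$ is a quotient of a polynomial ring $S_0[x_1,\ldots,x_k]$, and since the kernel of the surjection annihilates $N$ it also annihilates $T^i(N)$ by functoriality, while the graded subobject lattices of the two interpretations agree. The base case $k = 0$ is easy: the ascending chains $\bigoplus_{n \le r} N_n$ and $\bigoplus_{n \ge -r} N_n$ force a noetherian graded $(S_0, \Escr)$-object to be supported in finitely many degrees with each $N_n$ noetherian in $\Escr$, so $T^i(N) = \bigoplus_n T^i(N_n)$ has the same finite support with each piece automatically noetherian in $\Fscr$ (every object of $\Fscr$ is).

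For the inductive step, put $S' = S_0[x_1,\ldots,x_{k-1}]$ and decompose $x_k \colon N \to N(e_k)$ as
\[
0 \to K \to N \to L \to 0, \qquad 0 \to L \to N(e_k) \to C \to 0,
\]
where $K = \ker(x_k)$, $L = \operatorname{im}(x_k)$, $C = \coker(x_k)$. Since $x_k$ annihilates $K$ and $C$, both are noetherian graded $(S', \Escr)$-objects, and by the induction hypothesis $T^j(K)$ and $T^j(C)$ are noetherian graded $(S', \Fscr)$-objects for every $j$. The induced action of $x_k$ on $T^i(N)$ factors as $T^i(N) \xrightarrow{\psi} T^i(L) \xrightarrow{\phi} T^i(N(e_k))$ through the two long exact sequences, and a short diagram chase fits $\coker(x_k|_{T^i(N)}) = T^i(N(e_k))/\phi\psi(T^i(N))$ into a short exact sequence whose subobject is a quotient of $T^i(L)/\operatorname{im}\psi \hookrightarrow T^{i+1}(K)$ and whose quotient is $T^i(N(e_k))/\operatorname{im}\phi \hookrightarrow T^i(C)$; hence $\coker(x_k|_{T^i(N)})$ is a noetherian graded $(S', \Fscr)$-object.

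The final Hilbert-basis step: set $M = T^i(N)$, bounded below since $N$ is, with each $M_n = T^i(N_n)$ automatically noetherian in $\Fscr$. Fix a lower bound $n_0$ for $M$. For each $p \ge n_0$ the subobject
\[
M^{(p)} = \operatorname{im}\Bigl(\bigoplus_{n_0 \le j \le p} M_j \otimes_{S_0} S(-j) \to M\Bigr)
\]
is a noetherian graded $(S, \Fscr)$-object by Lemma~\ref{ref-2.2.2-3}. The images of the $M^{(p)}$ in $\coker(x_k|_M)$ form an ascending chain in that noetherian $(S', \Fscr)$-object; it stabilizes at some $p_0$, and since $\bigcup_p M^{(p)} = M$ the stabilized value is all of $\coker(x_k|_M)$. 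Translating back yields $M_n = M^{(p_0)}_n + x_k M_{n - e_k}$ for every $n$, and then an induction on $n$, using $e_k \ge 1$ together with the $S$-stability of $M^{(p_0)}$, forces $M = M^{(p_0)}$, hence noetherian. The main obstacle will be getting the diagram chase right: factoring $x_k$ on $T^i(N)$ through $T^i(L)$ and carefully stitching together the two long exact sequences so that $\coker(x_k|_{T^i(N)})$ becomes an extension of subquotients of the inductively-noetherian $T^j(K)$ and $T^j(C)$.
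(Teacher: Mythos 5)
Your proof is correct. The overall strategy coincides with the paper's: induct on the number of algebra generators of $S$ over $S_0$ (your reduction to a polynomial ring $S_0[x_1,\dots,x_k]$ with induction on $k$ amounts to the same thing), dispose of the base case by finiteness of the support, and in the inductive step show that $T^i(N)$ modulo the last generator is noetherian over the smaller ring before concluding with a Hilbert-basis argument. Where you diverge is in the inductive step itself. The paper first filters $N$ by its $t$-power-torsion part, reducing to the two cases $Nt=0$ (handled directly by the induction hypothesis) and $N$ $t$-torsion-free; in the latter case $\times t$ is injective on $N$, so a single long exact sequence immediately gives $T^i(N)/T^i(N)t\hookrightarrow T^i(N/Nt)$. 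You instead keep $N$ whole and control $\coker(x_k|_{T^i(N)})$ by splicing together the two long exact sequences coming from $\ker(x_k)$ and $\coker(x_k)$, exhibiting it as an extension of a subobject of $T^i(C)$ by a subquotient of $T^{i+1}(K)$. This costs a longer diagram chase but avoids the torsion filtration (and the attendant appeal to noetherianity to bound the torsion and split $N'$ into finitely many layers). You also write out in full the final passage from noetherianity of $T^i(N)/T^i(N)x_k$ and of each graded piece to noetherianity of $T^i(N)$, a step the paper dismisses with ``one easily obtains''. Both arguments are complete; yours is somewhat longer but more self-contained.
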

\begin{proof} We perform induction on the minimal number of generators
  $d$ of $S$ as $S_0$-algebra.  If $d=0$ then $S=S_0$ and hence $N$ is
  concentrated in a finite number of degrees. In this case $T^i(N)$ is
  obviously noetherian.

Now assume $d>0$ and pick a homogeneous generator $t$ of $S$ over $S_0$ of strictly
positive degree $f$. Then $N$ can be written as an extension
\[
0\r N'\r N\r N''\r 0
\]
 where $N'$ is annihilated by some power of $t$ and  $N''$ is $t$-torsion free.
The object $N'$ can itself be written as a repeated extension of objects annihilated by $t$.
Thus it suffices to treat the cases where $N$ is annihilated by $t$ and where $N$ is $t$-torsion free. 

If $N$ is annihilated by $t$ then $T^i(N)$ is noetherian by induction (since $N$ is 
now an $S/tS$-module and $S/tS$ has one generator less than $S$). Hence we assume
that $N$ is $t$-torsion free. From the exact sequence
\[
0\r N\xrightarrow{\times t} N\r N/Nt\r 0
\]
we obtain an injection
\[
T^i(N)/T^i(N)t\hookrightarrow T^i(N/Nt)
\]
By induction $T^i(N/Nt)$ is noetherian and hence so is $T^i(N)/T^i(N)t$. From
this one easily obtains that $T^i(N)$ itself is noetherian. 
\end{proof}
\subsection{Formal flatness}
\label{ref-2.4-18}
The notations $R,J,\Cscr$ are as above. For use below it would be convenient to assume 
that $\Cscr$ if flat. Unfortunately even if $\Cscr$ is $R$-flat then
there seems to be no a priori reason for $\widehat{\Cscr}$ to be flat
(although we do not know an explicit counter example). 
To work around this issue we make the following definition
\begin{definitions} The $R$-linear category $\Cscr$ is \emph{formally flat} if the categories $\Cscr_{R/J^n}$
are $R/J^n$-flat for all $n$.
\end{definitions}
Since $\widehat{\Cscr}_{R/J^n}=\Cscr_{R/J^n}$ it immediately follows that if $\Cscr$ is
formally flat then so is~$\widehat{\Cscr}$. 
The following proposition yields a different characterization of formal flatness.
\begin{propositions} \label{ref-2.4.2-19} Let $\Cscr_t$ be the full subcategory of objects
  in $\Cscr$ that are annihilated by some power of $J$. This is
  naturally an $R$-linear category. Then $\Cscr$ is formally flat if
  and only if $\Cscr_t$ is flat.
\end{propositions}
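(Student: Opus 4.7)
The strategy is to use the Grothendieck change-of-rings spectral sequence
\begin{equation*}
E_2^{p,q} = \Tor^{R/J^m}_p(\Tor^R_q(N, R/J^m), M) \Rightarrow \Tor^R_{p+q}(N, M)
\end{equation*}
for $M \in \Cscr_{R/J^m}$ and $N \in \mod(R)$, combined with the Artin-Rees lemma. Recall that effaceability of $\Tor^R_i(N,-)$ on a category means that for each $M$ there is an epi $M' \twoheadrightarrow M$ making the induced map $\Tor^R_i(N, M') \to \Tor^R_i(N, M)$ zero, not that $\Tor^R_i(N, M')$ itself vanishes; this flexibility is crucial, and allows iterated effacements to be composed so that one epi simultaneously effaces $\Tor^R_j$ for all $1 \leq j \leq i$.

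For the direction $\Cscr_t$ flat $\Rightarrow \Cscr$ formally flat, fix $n$ and take $M \in \Cscr_{R/J^n}$, $N \in \mod(R/J^n)$. Viewing $N$ as an $R$-module, $R$-flatness of $\Cscr_t$ yields an epi $M' \twoheadrightarrow M$ in $\Cscr_t$ effacing the induced maps on $\Tor^R_j(N,-)$ for all $1 \leq j \leq i$. Since $MJ^n = 0$, the quotient $M'/M'J^n$ lies in $\Cscr_{R/J^n}$ and still surjects onto $M$. Because $N \in \mod(R/J^n)$ gives $\Tor^R_0(N, R/J^n) = N$, the bottom row of the spectral sequence at level $m = n$ computes $E_2^{p,0} = \Tor^{R/J^n}_p(N,-)$; an inductive analysis of the filtration of the abutment extracts the desired vanishing of the induced map on $\Tor^{R/J^n}_i(N,-)$ in $\Cscr_{R/J^n}$.

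Conversely, suppose $\Cscr$ is formally flat, and fix $M \in \Cscr_{R/J^n}$, $N \in \mod(R)$, $i > 0$. Apply Artin-Rees to $N$ to obtain an $r$, independent of $m$, such that $\Tor^R_q(N, R/J^m)$ is annihilated by $J^r$ for all $q \geq 1$ and all sufficiently large $m$. Taking $m$ large and applying $R/J^m$-flatness of $\Cscr_{R/J^m}$ iteratively to each $R/J^m$-module $\Tor^R_q(N, R/J^m)$, construct an epi $M' \twoheadrightarrow M$ in $\Cscr_{R/J^m}$ effacing the higher-Tor entries $E_2^{p,q}(M') \to E_2^{p,q}(M)$ for $p \geq 1$ and $p + q \leq i$. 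The residual $p = 0$, $q \geq 1$ entries are $J^r$-annihilated by Artin-Rees, and a further quotient of $M'$ in $\Cscr_t$ forces their image in the filtration of $\Tor^R_i(N, M)$ to vanish.

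The main obstacle is the forward direction, specifically the $p = 0$ column of the spectral sequence, whose entries are tensor products rather than derived functors and so are not directly effaceable by categorical means. Artin-Rees control of the $J$-adic annihilator of $\Tor^R_q(N, R/J^m)$, uniformly in $m$, is essential for reducing these contributions to a range where formal flatness can eliminate them.
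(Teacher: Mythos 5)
Your route is genuinely different from the paper's: the paper first dimension-shifts in the module variable $N$ to reduce to $i=1$, then uses Artin--Rees to produce a natural isomorphism $\Tor_1^R(M,N)\cong\Tor_1^{R/J^l}(M,N/NJ^l)$ for $l\gg 0$, after which effaceability is inherited directly from $R/J^l$-flatness of $\Cscr_{R/J^l}$; the converse direction is a one-line appeal to stability of flatness under base change together with $\Cscr_{t,R/J^n}=\Cscr_{R/J^n}$. Your spectral-sequence approach breaks down precisely at the point you yourself flag as the main obstacle. In the direction ``formally flat $\Rightarrow\Cscr_t$ flat'', the bottom filtration piece of $\Tor_i^R(N,M)$ is the image of the edge map $\Tor_i^R(N,R/J^m)\otimes_{R/J^m}M\to\Tor_i^R(N,M)$. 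Because $V\otimes_{R/J^m}-$ is right exact, for \emph{every} epimorphism $M''\twoheadrightarrow M$ with $M''\in\Cscr_{R/J^m}$ the square of edge maps shows that $\Tor_i^R(N,\pi)$ carries the bottom piece for $M''$ \emph{onto} the bottom piece for $M$; so no ``further quotient of $M'$'' (which is again an object of $\Cscr_{R/J^m}$) can kill this contribution, and the uniform $J^r$-annihilation of $\Tor_q^R(N,R/J^m)$ --- which is true, but is a statement about annihilators, not about effaceability --- does not help: an induced map of tensor products is an epimorphism and hence is zero only when the target already vanishes. What actually kills the $p=0$ column is that the \emph{transition maps} $\Tor_q^R(N,R/J^{m'})\to\Tor_q^R(N,R/J^m)$ vanish for $m'\gg m$ (the system is essentially zero), which forces you to compare the spectral sequences at two different levels $m'\ge m$; that comparison is absent from your sketch. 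The paper's reduction to $i=1$ avoids the spectral sequence, and this problem, entirely.

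The other direction as you set it up also has a gap. Flatness of $\Cscr_t$ produces an effacing epimorphism $M'\twoheadrightarrow M$ with $M'\in\Cscr_{R/J^m}$ for some $m$ possibly larger than $n$, and passing to $M'/M'J^n$ does \emph{not} preserve the effacement: the map $\Tor_i^R(N,M')\to\Tor_i^R(N,M'/M'J^n)$ need not be surjective for $i>0$, so the vanishing of $\Tor_i^R(N,M')\to\Tor_i^R(N,M)$ tells you nothing about $\Tor_i^R(N,M'/M'J^n)\to\Tor_i^R(N,M)$. Moreover, effacing the abutment only controls $E_\infty^{i,0}$, which is in general a proper subobject of $E_2^{i,0}=\Tor_i^{R/J^n}(N,M)$; the cokernel is built from the outgoing differentials $d_r\colon E_r^{i,0}\to E_r^{i-r,r-1}$ and your ``inductive analysis'' would have to efface those targets as well, again running into the $p=0$ tensor-product terms. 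This implication is exactly the base-change stability of flatness proved in the reference the paper cites, and trying to rederive it by effacement juggling is where your argument becomes circularly difficult.
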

\begin{proof}
Assume first that $\Cscr$ is formally flat. Take $M\in \Cscr_t$, $N\in \mod(R)$. We must
prove that $\Tor_i^R(M,N)$ is effaceable in its first argument in $\Cscr_t$ for $i>0$. We
assume $M\in \Cscr_{R/J^n}$.

By dimension shifting in $N$ we may reduce to the case $i=1$. Take an exact sequence in $\mod(R)$
\[
0\r N'\r P\r N\r 0
\]
with $P$ projective. Then we get an exact sequence
\begin{equation}
\label{ref-2.6-20}
0\r \Tor_1^R(M,N)\r M\otimes_R N'\r M\otimes_R P\r M\otimes_R N\r 0
\end{equation}
Let $F$ be the filtration on $N'$ induced from the $J$-adic filtration on $P$. 
From
\[
0\r N'/F_lN' \r P/PJ^l\r N/NJ^l\r 0
\]
we obtain an exact sequence (for $l\ge n$)
\[
0\r \Tor_1^{R/J^l}(M,N/NJ^l)\r M\otimes_{R/J^l} N'/F_lN'\r M\otimes_{R/J^l} P/PJ^l\r M\otimes_{R/J^l} N/NJ^l\r 0
\]
Combining these two sequence we see that there is a map
\begin{equation}
\label{new-2}
\Tor_1^R(M,N)\r \Tor_1^{R/J^l}(M,N/NJ^l)
\end{equation}
natural in $M$ (for $l\ge n$).

By the
Artin-Rees condition (Lemma \ref{ref-2.2.8-10}) we may take an $l$ such that $F_lN'\subset N'J^n$.
Then from \eqref{ref-2.6-20} we obtain an exact sequence
\[
0\r \Tor_1^R(M,N)\r M\otimes_{R/J^l} N'/F_lN'\r M\otimes_{R/J^l} P/PJ^l\r M\otimes_{R/J^l} N/NJ^l\r 0
\]
and thus we have deduced that for $l$ large \eqref{new-2} is an isomorphism
\begin{equation}
\label{ref-2.7-21}
\Tor_1^R(M,N)= \Tor_1^{R/J^l}(M,N/NJ^l)
\end{equation}
Since $\Cscr_{R/J^l}$ is flat $ \Tor_1^{R/J^l}(M,N/NJ^l)$ is
effaceable in its first argument in $\Cscr_{R/J^l}$ by an epimorphism
$M'\r M$. Thus we get a commutative diagram
\[
\xymatrix{
\Tor_1^{R}(M',N)\ar[r]\ar[d]&\Tor_1^{R/J^l}(M',N/NJ^l)\ar[d]\\
\Tor_1^{R}(M,N)\ar@{=}[r]&\Tor_1^{R/J^l}(M,N/NJ^l)\\
}
\]
with the right most map being zero. It follows that the left most map
is also zero.  Thus $\Tor_1^R(M,N)$ is effaceable in $M$ in $\Cscr_t$ and
hence $\Cscr_t$ is flat. 

Conversely assume $\Cscr_t$ is flat. Since $\Cscr_{t,R/J^n}=\Cscr_{R/J^n}$ and flatness is
stable under base change (\cite[Prop.\ 4.8]{lowenvdb1}) we conclude that $\Cscr_{R/J^n}$ is flat. 
\end{proof}
By Lemma  \ref{ref-2.2.10-13} the inclusion of abelian categories
$
\Cscr_t \r\Cscr
$
yields an equivalence
\[
\widehat{\Cscr}_t\cong \widehat{\Cscr}
\]
Hence when $\Cscr$ is formally flat we may always reduce to the case that $\Cscr$ is flat.

\medskip

Flatness on the level of objects does not present any pitfalls as the following proposition
shows.
\begin{propositions} Let $M\in\widehat{\Cscr}$. Then $M$ is $R$-flat if and only if $M/MJ^n$ is
$R/J^n$-flat for all $n$.
\end{propositions}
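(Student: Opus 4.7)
The plan is to treat the two directions separately. The forward direction is a base change argument: if $M$ is $R$-flat then for any $N\in\mod(R/J^n)$, viewed as an $R$-module, the identification $(M/MJ^n)\otimes_{R/J^n}N=M\otimes_R N$ combined with the exactness of $M\otimes_R-$ on $\mod(R)$ shows that $M/MJ^n$ is $R/J^n$-flat.

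For the reverse direction, I would fix $N\in\mod(R)$ and aim to prove $\Tor_1^R(M,N)=0$. First resolve $N$ by finitely generated free $R$-modules $P_\bullet$. Since $M\otimes_R P_k = M^{b_k}=\invlim_l (M/MJ^l)^{b_k}$ in $\Pro(\Cscr)$ and filtered inverse limits are exact there, homology commutes with $\invlim_l$ and
\[
\Tor_i^R(M,N)=H_i(M\otimes_R P_\bullet)=\invlim_l \Tor_i^R(M/MJ^l,N).
\]
Using the hypothesis that $M/MJ^l$ is $R/J^l$-flat together with the factorization $(M/MJ^l)\otimes_R P_\bullet=(M/MJ^l)\otimes_{R/J^l}(R/J^l\otimes_R P_\bullet)$, I would commute $(M/MJ^l)\otimes_{R/J^l}-$ past homology to obtain the change-of-rings identity
\[
\Tor_i^R(M/MJ^l,N)=(M/MJ^l)\otimes_{R/J^l}\Tor_i^R(R/J^l,N).
\]

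The main obstacle is then to verify that the inverse system $(\Tor_1^R(R/J^l,N))_l$ is pro-zero, i.e.\ has eventually vanishing transition maps. This is where Artin-Rees enters: setting $K=\ker(P_0\to N)$, one has $\Tor_1^R(R/J^l,N)=(K\cap J^l P_0)/J^l K$, and Lemma~\ref{ref-2.2.8-10} provides a uniform $r$ (independent of $l$) with $J^{l+r}P_0\cap K\subset J^l K$. This forces the transition $\Tor_1^R(R/J^{l+r},N)\to\Tor_1^R(R/J^l,N)$ to vanish, and hence the transitions in $(M/MJ^l)\otimes_{R/J^l}\Tor_1^R(R/J^l,N)$ from index $l+r$ down to $l$ vanish as well. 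The pro-system is therefore zero in $\Pro(\Cscr)$, giving $\Tor_1^R(M,N)=0$, so $M$ is $R$-flat.
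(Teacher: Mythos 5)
Your proof is correct and follows essentially the same route as the paper's: both directions reduce, via exactness of filtered inverse limits in $\Pro(\Cscr)$ and the $R/J^l$-flatness of the quotients $M/MJ^l$, to a statement purely about finitely generated $R$-modules that is settled by the Artin--Rees condition (Lemma \ref{ref-2.2.8-10}). The only difference is one of phrasing: the paper states the key point as the sequence $0\r K/KJ^l\r L/LJ^l\r N/NJ^l\r 0$ being exact up to essentially zero systems, whereas you state it as the pro-system $(\Tor_1^R(R/J^l,N))_l$ being essentially zero --- the same fact in $\Tor$ language.
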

\begin{proof} We consider the non-obvious direction. Assume that $M\in\widehat{\Cscr}$ is
such that all $M/MJ^n$ are flat. We need to prove that $M\otimes_R -$ is exact. 

Consider
an exact sequence in $\mod(R)$. 
\[
0\r K\r L\r N\r 0
\]
We have to show that 
\[
0\r M\otimes_R K\r M\otimes_R L\r M\otimes_R N\r 0
\]
is exact. 
After tensoring with $R/J^n$ is is sufficient to show that 
\[
0\r M\otimes_R K/KJ^n\r M\otimes_R L/LJ^n\r M\otimes_R N/NJ^n\r 0
\]
is exact up to essentially zero systems. This is the same sequence as
\[
0\r M/MJ^n\otimes_{R/J^n} K/KJ^n\r M/MJ^n\otimes_{R/J^n} L/LJ^n\r M/MJ^n\otimes_{R/J^n} N/NJ^n\r 0
\]
Hence by flatness of $M/MJ^n$ it is sufficient that 
\[
0\r K/KJ^n\r L/LJ^n\r N/NJ^n\r 0
\]
is exact up to essentially zero systems. This follows from the
Artin-Rees condition (see Lemma \ref{ref-2.2.8-10}).
\end{proof}
\subsection{$\Ext$-groups}
Now we discuss $\Ext$-groups.
 by which we always mean Yoneda $\Ext$-groups.
We keep the notations from the previous section. Thus $(R,J)$ is a
noetherian $J$-adic ring and $\Cscr$ is an $R$-linear noetherian
abelian category.

We make the following definition
\begin{definitions} Let $M,N\in \widehat{\Cscr}$. 
Then the \emph{completed $\Ext$-groups} between $M$,$N$ are defined by
\begin{align*}
`\Ext_{\widehat{\Cscr}}^i(M,N)&=\Ext_{\Pro(\Cscr_t)}^i(M,N)
\end{align*}
\end{definitions}
It is clear that $`\Ext_{\widehat{\Cscr}}^i(M,N)$ is a $\partial$-functor
in both arguments. Apart from this nice property we don't know
if completed $\Ext$-groups are meaningful objects in general. To get better control we will assume
that $\Cscr$ is formally flat 
and we impose an additional  finiteness condition
\begin{definitions} An $R$-linear abelian category $\Dscr$ is \emph{$\Ext$-finite}
if for all objects $M,N\in \Dscr$ we have that $\Ext^i_\Dscr(M,N)$ is a finitely
generated $R$-module for all $i$. 
\end{definitions}
The following will be the main result of this section
\begin{propositions} \label{ref-2.5.3-22} Assume that $\Cscr$ is formally flat and that
  $\Cscr_{R/J}$ is $\Ext$-finite.  Then  $`\Ext_{\widehat{\Cscr}}^i(M,N)\in \mod(\hat{R})$  for $M,N\in
  \widehat{\Cscr}$
and furthermore
\begin{align}
`\Ext_{\widehat{\Cscr}}^i(M,N)
&=\invlim_l \Ext_{\widehat{\Cscr}}(M,N/NJ^k)\label{new-4}\\
&=\invlim_k \dirlim_l \Ext_{\Cscr_{R/J^l}}^i(M/MJ^l,N/NJ^k) \label{new-5}
\end{align}
If $M$ is in addition $R$-flat then
\begin{equation}
\label{new-6}
`\Ext_{\widehat{\Cscr}}^i(M,N)=\invlim_k  \Ext_{\Cscr_{R/J^k}}^i(M/MJ^k,N/NJ^k)
\end{equation}
\end{propositions}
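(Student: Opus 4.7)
The plan is to reduce to the case that $\Cscr$ is flat, then to prove formula \eqref{new-5} as the central identity, from which \eqref{new-4}, finite generation, and \eqref{new-6} follow with additional arguments.

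First, since $\Cscr$ is formally flat, Proposition \ref{ref-2.4.2-19} together with the equivalence $\widehat{\Cscr_t}\cong\widehat{\Cscr}$ (Lemma \ref{ref-2.2.10-13} applied to $\Cscr_t\hookrightarrow\Cscr$) allow me to replace $\Cscr$ by $\Cscr_t$ and assume $\Cscr$ is $R$-flat. The core step is then the identity
\[
\Ext^i_{\Pro(\Cscr_t)}(M, P) = \dirlim_{l \geq k} \Ext^i_{\Cscr_{R/J^l}}(M/MJ^l, P)
\]
for $M \in \widehat{\Cscr}$ and $P \in \Cscr_{R/J^k}$; applied with $P = N/NJ^k$ and followed by $\invlim_k$ it yields \eqref{new-5}. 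At $i=0$ this is precisely the co-finite presentation of $P$ as an object of $\Pro(\Cscr_t)$ (the first lemma of \S 2.2). For $i>0$, both sides are $\partial$-functors in $M$ on $\widehat{\Cscr}$: the left side because $\widehat{\Cscr}$ is an abelian subcategory of $\Pro(\Cscr_t)$ (Proposition \ref{ref-2.2.4-6}); the right side because $\dirlim_l$ is exact and the Artin--Rees lemma \ref{ref-2.2.8-10} ensures that a short exact sequence in $\widehat{\Cscr}$ becomes exact up to essentially zero systems after reduction mod $J^l$. I would then conclude equality by an effaceability argument: for each $M\in \widehat{\Cscr}$ produce an epi $Q\twoheadrightarrow M$ in $\widehat{\Cscr}$ simultaneously killing both $\partial$-functors in positive degree, using enough projectives in $\Pro(\Cscr_t)$ together with Theorem \ref{ref-2.3.1-15} to keep the effacer inside $\widehat{\Cscr}$.

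Granted \eqref{new-5}, formula \eqref{new-4} amounts to commuting $\Ext^i_{\Pro(\Cscr_t)}(M,-)$ with the inverse limit $N=\invlim_k N/NJ^k$, which is legitimate once the system $E^i_k := \dirlim_l \Ext^i_{\Cscr_{R/J^l}}(M/MJ^l, N/NJ^k)$ is Mittag--Leffler. I would obtain Mittag--Leffler and the finite-generation claim $E^i_k\in\mod(R/J^k)$ simultaneously: $\gr_J M$ and $\gr_J N$ are noetherian graded $(\gr_J R, \Cscr_{R/J})$-objects by Lemma \ref{ref-2.2.2-3}, and applying Lemma \ref{ref-2.3.2-17} to the (bi-)$\partial$-functor $\Ext^*_{\Cscr_{R/J}}(-,-)$ (which is $\mod(R/J)$-valued by Ext-finiteness of $\Cscr_{R/J}$) shows that $\bigoplus_{m,n}\Ext^i_{\Cscr_{R/J}}((\gr_J M)_m,(\gr_J N)_n)$ is a noetherian graded $\gr_J R$-module. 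Transporting this through the spectral-sequence argument of Theorem \ref{ref-2.3.1-15} controls $\gr_F E^i_k$ over $\gr_J R$, giving $E^i_k\in\mod(R/J^k)$ and showing that $\{E^i_k\}_k$ is essentially a $J$-adic inverse system; it is therefore Mittag--Leffler with $\invlim E^i_k\in\mod(\hat R)$. Finally, for \eqref{new-6} when $M$ is $R$-flat, the transition maps $\Ext^i_{\Cscr_{R/J^k}}(M/MJ^k, N/NJ^k)\to \Ext^i_{\Cscr_{R/J^l}}(M/MJ^l, N/NJ^k)$ for $l\geq k$ are isomorphisms: $M/MJ^l$ is $R/J^l$-flat and $N/NJ^k$ is annihilated by $J^k$, so the Grothendieck change-of-rings spectral sequence for $R/J^l\twoheadrightarrow R/J^k$ collapses and identifies both sides with $\Ext^i_{\Cscr_{R/J^k}}(M/MJ^k, N/NJ^k)$, making the $\dirlim_l$ in \eqref{new-5} stationary from $l=k$.

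The principal obstacle is the effaceability step of the second paragraph: projectives in $\Pro(\Cscr_t)$ generally fail to lie in $\widehat{\Cscr}$, so constructing an effacer that stays inside $\widehat{\Cscr}$ and simultaneously kills Ext computed in the ambient Pro-category on the one hand, and $\dirlim_l\Ext^i_{\Cscr_{R/J^l}}((-)/(-)\, J^l,P)$ on the other, is the delicate point.
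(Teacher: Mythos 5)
Your overall plan tracks the paper's proof, but the step you yourself flag as ``the principal obstacle'' is a genuine gap, and the paper fills it by a different mechanism. The paper does not prove the central identity $\Ext^i_{\Pro(\Cscr_t)}(M,P)=\dirlim_l\Ext^i_{\Cscr_{R/J^l}}(M/MJ^l,P)$ (its \eqref{ref-2.8-25}) by comparing two $\partial$-functors in $M$ on $\widehat{\Cscr}$: as you note, the projectives of $\Pro(\Cscr_t)$ do not lie in $\widehat{\Cscr}$, there is no visible supply of epimorphisms $Q\twoheadrightarrow M$ with $Q\in\widehat{\Cscr}$ and $\Ext^{i}_{\Pro(\Cscr_t)}(Q,P)=0$ for $i>0$, so the left-hand $\partial$-functor is not known to be coeffaceable on $\widehat{\Cscr}$ and the uniqueness argument cannot be run. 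The missing idea is to exploit flatness in the $\mod(R)$-variable instead: since $\Cscr_t$ (hence $\Pro(\Cscr_t)$) is flat, for each $l$ there is a change-of-rings spectral sequence $E_2^{pq}(l):\Ext^p_{\Cscr_{R/J^l}}(\Tor_q^R(M,R/J^l),P)\Rightarrow\Ext^{p+q}_{\Pro(\Cscr_t)}(M,P)$, using $\Pro(\Cscr_t)_{R/J^l}=\Pro(\Cscr_{R/J^l})$ and the agreement of $\Ext$ in $\Cscr_{R/J^l}$ with $\Ext$ in its Pro-category. By Lemma \ref{ref-2.5.6-30} (an Artin--Rees consequence) the system $(\Tor_q^R(M,R/J^l))_l$ is essentially zero for $q>0$, so the transition maps $E_2^{pq}(l)\to E_2^{pq}(l')$ vanish there for $l'\gg l$; the colimit over $l$ therefore collapses onto the row $q=0$ and gives the identity with no effaceability needed. (An effaceability argument does occur in the paper, but only in Lemma \ref{ref-2.5.7-31}, to compare $\dirlim_l\Ext^i_{\Cscr_{R/J^l}}(M/MJ^l,P)$ with Yoneda $\Ext$ computed \emph{inside} $\widehat{\Cscr}$; there the effacing object is a pullback formed in $\widehat{\Cscr}$ itself, which is unproblematic.)

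The remaining steps are essentially the paper's, with two smaller caveats. Mittag--Leffler of the system $\Ext^i_{\Pro(\Cscr_t)}(M,N/NJ^k)$ alone does not license ``commuting $\Ext^i(M,-)$ with $\invlim_k$'': one also needs Mittag--Leffler for the terms of the Hom-complexes, which is why the paper fixes a projective resolution $P_\bullet$ of $M$ in $\Pro(\Cscr_t)$, uses that $\Hom(P_m,-)$ preserves inverse limits and that projectivity gives surjective transition maps on the terms, and only then invokes Mittag--Leffler of the cohomology. For that cohomological Mittag--Leffler the paper simply applies Theorem \ref{ref-2.3.1-15} to the $\partial$-functor $\Ext^i_{\Pro(\Cscr_t)}(M,-):\Cscr_t\to\mod(R)_t$ (Lemma \ref{ref-2.5.8-32}) and then Lemma \ref{ref-2.5.9-34}; your plan of reconstructing the filtration spectral sequence by hand from $\gr_J N$ via Lemma \ref{ref-2.3.2-17} is the same mechanism, just heavier than necessary. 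Your degeneration argument for \eqref{new-6} when $M$ is flat agrees with the paper's.
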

The proof is a series of lemmas.
\begin{lemmas} \label{ref-2.5.4-23} Assume that $\Cscr$ is formally flat and that $\Cscr_{R/J}$ is $R/J$-$\Ext$ finite then $\Cscr_{R/J^n}$ is
$R/J^n$-$\Ext$-finite for all $n$. 
\end{lemmas}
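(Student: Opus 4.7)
The plan is a two-step reduction of the question to the given Ext-finiteness of $\Cscr_{R/J}$. For any $M\in \Cscr_{R/J^n}$ the $J$-adic filtration $M\supset MJ\supset\cdots\supset MJ^{n-1}\supset 0$ realises $M$ as an $n$-step iterated extension in $\Cscr_{R/J^n}$ of the objects $MJ^k/MJ^{k+1}\in \Cscr_{R/J}$. Running the long exact sequences of $\Ext^\bullet_{\Cscr_{R/J^n}}(-,N)$ attached to $0\to MJ^{k+1}\to MJ^k\to MJ^k/MJ^{k+1}\to 0$ and using descending induction on $k$, finite generation of $\Ext^i_{\Cscr_{R/J^n}}(M',N)$ for every $M'\in \Cscr_{R/J}$ yields finite generation for arbitrary $M\in\Cscr_{R/J^n}$. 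The analogous reduction on $N$ then brings us to the case $M,N\in \Cscr_{R/J}$ viewed as objects of $\Cscr_{R/J^n}$.

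For this case I would use the adjoint pair consisting of $F=-\otimes_{R/J^n} R/J=-/(-)J$ and the exact inclusion $G\colon\Cscr_{R/J}\hookrightarrow \Cscr_{R/J^n}$. Formal flatness makes $\Cscr_{R/J^n}$ an $R/J^n$-flat category, so the base-change formalism of \cite{lowenvdb1} equips $F$ with left derived functors $L^qF(M)=\Tor_q^{R/J^n}(M,R/J)$, and via these one obtains the first-quadrant hyper-$\Ext$ spectral sequence
\begin{equation*}
E_2^{p,q}=\Ext^p_{\Cscr_{R/J}}\bigl(\Tor_q^{R/J^n}(M,R/J),\,N\bigr)\Rightarrow \Ext^{p+q}_{\Cscr_{R/J^n}}(M,N).
\end{equation*}
Taking a resolution of $R/J$ over the noetherian ring $R/J^n$ by finite-rank free modules and applying $M\otimes_{R/J^n}-$ exhibits each $\Tor_q^{R/J^n}(M,R/J)$ as a subquotient of a finite direct sum of copies of $M$, hence as a noetherian object of $\Cscr_{R/J}$; the $\Ext$-finiteness of $\Cscr_{R/J}$ then makes every $E_2^{p,q}$ finitely generated over $R/J$. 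First-quadrant convergence gives $\Ext^k_{\Cscr_{R/J^n}}(M,N)$ a finite filtration whose subquotients are subquotients of $E_2^{p,q}$ for $p+q=k$, and so it is finitely generated over $R$; since it is $J^n$-annihilated it is in fact finitely generated over $R/J^n$, as required.

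The main obstacle I anticipate is the careful construction of this hyper-$\Ext$ spectral sequence in the abstract $R$-linear abelian categorical setting: one must extract from formal flatness the identification of the left derived functors of the base-change functor $F$ with the $\Tor$-functors of \cite{lowenvdb1} and establish the existence of enough projective or $F$-acyclic resolutions (equivalently, of balanced hyper-derived bifunctors) to legitimise the spectral sequence. Once this foundational step is in hand the remainder of the argument is a routine combination of dimension shifting and noetherianity.
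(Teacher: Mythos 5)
Your proof is correct and follows essentially the same route as the paper: d\'evissage along the $J$-adic filtration to reduce to objects of $\Cscr_{R/J}$, followed by a change-of-rings spectral sequence in the style of \cite[Prop.\ 4.7]{lowenvdb1}, whose existence is exactly what formal flatness supplies. The only cosmetic difference is that the paper uses the variant $E_2^{pq}=\Ext^p_{\Cscr_{R/J}}(M,\Ext^q_{R/J^n}(R/J,N))$, deriving $\Hom_R(R/J,-)$ in the second argument so that no reduction of $N$ is needed, whereas you derive $-\otimes_{R/J^n}R/J$ in the first argument and therefore also filter $N$.
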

\begin{proof} We need to prove that for $M,N\in \Cscr_{R/J^n}$ we have
that $\Ext^i_{\Cscr_{R/J^n}}(M,N)$ is a finitely generated $R/J^n$-module. By 
filtering $M$ we may assume $M\in \Cscr_{R/J}$. Then we conclude by 
a change of rings spectral sequence (which depends on flatness, see \cite[Prop.\ 4.7]{lowenvdb1})
\[
E^{pq}_2:\Ext^p_{\Cscr_{R/J}}(M,\Ext^q_{\Cscr_{R/J^n}}(R/J,N))\Rightarrow
\Ext^{p+q}_{\Cscr_{R/J^n}}(M,N)\qed
\]
\def\qed{}\end{proof}
The following lemma is proved in a similar way.
\begin{lemmas} \label{ref-2.5.5-24} Assume that $\Cscr$ is formally flat and that $\Cscr_{R/J}$  is $\Ext$-finite. Let $M\in \widehat{\Cscr}$
and $N\in \Cscr_{R/J^n}$. Then $\Ext_{\Pro(\Cscr_t)}^i(M,N)$ is a finitely generated
$R/J^n$-module for all $i$.  Furthermore we have
\begin{equation}
\label{ref-2.8-25}
\Ext_{\Pro(\Cscr_t)}^i(M,N)=\dirlim_l \Ext_{\Cscr_{R/J^l}}^i(M/MJ^l,N)
\end{equation}
and
\begin{equation}
\label{ref-2.9-26}
\Ext_{\Pro(\Cscr_t)}^i(M,N)=\Ext_{\widehat{\Cscr}}^i(M,N)
\end{equation}
Finally if $M$ is flat over $R$ then
\begin{equation} \label{ref-2.10-27}
\Ext_{\Pro(\Cscr_t)}^i(M,N)=\Ext_{\Cscr_{R/J^n}}^i(M/MJ^n,N)
\end{equation}
\end{lemmas}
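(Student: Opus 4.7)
The strategy mimics Lemma~\ref{ref-2.5.4-23} while additionally exploiting the pro-limit description $M = \invlim_l M/MJ^l$ in $\Pro(\Cscr_t)$, valid via the equivalence $\widehat{\Cscr}_t \simeq \widehat{\Cscr}$ noted after Proposition~\ref{ref-2.4.2-19}. I would first establish the direct limit formula \eqref{ref-2.8-25} and then deduce the remaining assertions from it.

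For \eqref{ref-2.8-25} in the case $i = 0$, combining $M = \invlim_l M/MJ^l$ with co-finite presentation of $N \in \Cscr_t$ inside $\Pro(\Cscr_t)$ and full faithfulness of the chain $\Cscr_{R/J^l} \hookrightarrow \Cscr_t \hookrightarrow \Pro(\Cscr_t)$ produces the identification immediately. For $i \geq 1$ the key step is an Artin-Rees truncation of Yoneda extensions. Given a representative $\xi\colon 0 \to N \to E_i \to \cdots \to E_1 \to M \to 0$ in $\Pro(\Cscr_t)$, Proposition~\ref{ref-2.2.4-6} (applied after passing through $\widehat{\Cscr}_t \simeq \widehat{\Cscr}$) ensures each $E_j \in \widehat{\Cscr}$. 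Applying Lemma~\ref{ref-2.2.8-10} to the inclusion $N \hookrightarrow E_i$ inside the noetherian category $\widehat{\Cscr}$, and using $NJ^n = 0$, yields $l_0$ with $N \cap E_i J^l = 0$ for all $l \geq l_0$. Then quotienting each $E_j$ by $E_j J^l$ for such $l$ produces an exact sequence
\[
0 \to N \to E_i/E_i J^l \to \cdots \to E_1/E_1 J^l \to M/MJ^l \to 0
\]
in $\Cscr_{R/J^l}$, and the evident morphism from $\xi$ to this truncated chain (identity on $N$, the quotient maps elsewhere, and $\pi\colon M \twoheadrightarrow M/MJ^l$ on the right) exhibits $[\xi] = \pi^*[\xi']$ in $\Ext^i_{\Pro(\Cscr_t)}(M, N)$. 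This proves surjectivity of the natural map from the direct limit; a parallel truncation applied to each step of a zig-zag witnessing equivalence of extensions in $\Pro(\Cscr_t)$ yields injectivity, after choosing $l$ large enough to handle all the finitely many Artin-Rees constants involved.

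The identification \eqref{ref-2.9-26} then follows: $\widehat{\Cscr}$ is a full abelian subcategory of $\Pro(\Cscr_t)$ closed under extensions (a consequence of Proposition~\ref{ref-2.2.4-6} together with the equivalence above), so Yoneda $\Ext$-groups agree whether computed in $\widehat{\Cscr}$ or in $\Pro(\Cscr_t)$. For finite generation over $R/J^n$, combine \eqref{ref-2.8-25} with Lemma~\ref{ref-2.5.4-23}: each term $\Ext^i_{\Cscr_{R/J^l}}(M/MJ^l, N)$ is a finitely generated $R/J^n$-module (the $R/J^l$-action on the target factors through $R/J^n$ since $NJ^n = 0$), and the change-of-rings spectral sequence from the proof of Lemma~\ref{ref-2.5.4-23} applied to the transitions $l \to l+1$ shows that the direct system stabilizes for $l$ large, so the limit is itself finitely generated.

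For \eqref{ref-2.10-27}, $R$-flatness of $M$ gives $R/J^l$-flatness of $M/MJ^l$ by the preceding proposition on flatness of objects, hence $\Tor_q^{R/J^l}(R/J^n, M/MJ^l) = 0$ for $q > 0$ and $l \geq n$. The change-of-rings spectral sequence of Lemma~\ref{ref-2.5.4-23} then collapses and identifies $\Ext^i_{\Cscr_{R/J^l}}(M/MJ^l, N) = \Ext^i_{\Cscr_{R/J^n}}(M/MJ^n, N)$ for every $l \geq n$, so the direct limit in \eqref{ref-2.8-25} stabilizes at $l = n$. The main technical obstacle throughout is the Artin-Rees truncation for $i \geq 1$: uniformly controlling the exponents from Lemma~\ref{ref-2.2.8-10} along a length-$i$ chain of $E_j$'s, and simultaneously along equivalences of such chains, requires careful bookkeeping; uniformity is guaranteed by the noetherian property of $\widehat{\Cscr}$ coupled with the hypothesis $NJ^n = 0$, which confines the relevant Artin-Rees estimates to finite exponents at every stage.
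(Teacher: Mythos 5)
Your route is genuinely different from the paper's, which resolves $M$ by projectives in the flat category $\Pro(\Cscr_t)$ and runs a change-of-rings spectral sequence $\Ext^p_{\Cscr_{R/J^l}}(\Tor_q^R(M,R/J^l),N)\Rightarrow \Ext^{p+q}_{\Pro(\Cscr_t)}(M,N)$, killing the $q>0$ rows in the colimit over $l$ via Lemma \ref{ref-2.5.6-30}. But as written your argument has several genuine gaps. First, you assert that the middle terms $E_j$ of a Yoneda extension in $\Pro(\Cscr_t)$ lie in $\widehat{\Cscr}$ ``by Proposition \ref{ref-2.2.4-6}''. That proposition gives closure of $\widehat{\Cscr}$ under kernels and cokernels of maps between its objects, not under extensions; and for $i\ge 2$ even extension-closure would not place the $E_j$ in $\widehat{\Cscr}$, since they are neither kernels, cokernels, nor extensions of objects already known to lie there. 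Second, even granting $E_j\in\widehat{\Cscr}$, dividing each term by $E_jJ^l$ does not yield an exact sequence: $-\otimes_R R/J^l$ is only right exact, and Lemma \ref{ref-2.2.8-10} gives exactness only up to essentially zero systems, i.e.\ after passing to the limit in $l$, not at any fixed $l$. Third, your justification of \eqref{ref-2.9-26} --- closure under extensions, hence agreement of Yoneda $\Ext$'s --- is valid only for $\Ext^1$; agreement of higher Yoneda $\Ext$-groups for a full extension-closed subcategory fails in general and is precisely what must be proved (the paper obtains it by showing both sides equal the colimit \eqref{ref-2.8-25}, applying Lemma \ref{ref-2.5.7-31} to $\Dscr=\widehat{\Cscr}$). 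Finally, finite generation is deduced from an unproved stabilization of the direct system; a filtered colimit of finitely generated $R/J^n$-modules need not be finitely generated.

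A telling symptom is that your argument never uses flatness of $\Cscr_t$ (Proposition \ref{ref-2.4.2-19}), which is the essential input in the paper: it supplies projective resolutions of $M$ in $\Pro(\Cscr_t)$ with $R$-flat terms and hence the spectral sequence above. If you wish to avoid spectral sequences, the viable elementary route is the $\partial$-functor/effaceability comparison in the style of Lemma \ref{ref-2.5.7-31}, not term-by-term truncation of Yoneda extension chains.
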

\begin{proof} If $\Cscr_t$ is flat then so is $\Pro(\Cscr_t)$ (see \cite[Prop.\ 3.6]{lowenvdb1} 
for the dual statement). 
 Now we use the spectral sequence (for $l\ge n$)
\begin{equation}
\label{ref-2.11-28}
E^{pq}_2(l):\Ext_{\Pro(\Cscr_t)_{R/J^l}}^p(\Tor^R_q(M,R/J^l),N)\Rightarrow \Ext_{\Pro(\Cscr_t)}^{p+q}(M,N)
\end{equation}
which may derived in a similar way as \cite[Prop.\ 4.7]{lowenvdb1} (the existence
depends on flatness of $\Pro(\Cscr_t)$).  The formation
of $\Pro$-objects commutes with certain base changes (see \cite[Prop.\ 4.5]{lowenvdb1} for
the dual statement) 
and in particular $\Pro(\Cscr_t)_{R/J^l}=\Pro(\Cscr_{R/J^l})$.

Since
$\Tor^R_q(M,R/J^l)$ lies both in $\widehat{\Cscr}$ and is annihilated by $J^l$ it
lies in $\Cscr_{R/J^l}$. For an object $K\in \Cscr_{R/J^l}$ we have $\Ext^i_{\Pro(\Cscr_{R/J^l})}(
K,N)=\Ext^i_{\Cscr_{R/J^l}}(
K,N)$ (see \cite[Prop.\ 2.14]{lowenvdb1}). Hence the spectral sequence \eqref{ref-2.11-28} becomes
\begin{equation}
\label{ref-2.12-29}
E^{pq}_2(l):\Ext_{\Cscr_{R/J^l}}^p(\Tor^R_q(M,R/J^l),N)\Rightarrow \Ext_{\Pro(\Cscr_t)}^{p+q}(M,N)
\end{equation}
To prove finite generation we put $l=n$ and  invoke  Lemma \ref{ref-2.5.4-23}.

To prove \eqref{ref-2.8-25} we note that for $l\le l'$ there are maps
of spectral sequence $E(l)\r E(l')$ which are given by the compositions
\[
\Ext_{\Cscr_{R/J^l}}^p(\Tor^R_q(M,R/J^l),N)\r \Ext_{\Cscr_{R/J^{l'}}}^p(\Tor^R_q(M,R/J^l),N)
\r \Ext_{\Cscr_{R/J^{l'}}}^p(\Tor^R_q(M,R/J^{l'}),N)
\]
It follows from Lemma \ref{ref-2.5.6-30} below that $E^{pq}_2(l)\r
E^{pq}_2(l')$ is zero for $q>0$ and $l\ll l'$ (taking into account that $M$ is a noetherian 
object in $\widehat{\Cscr}$). Taking a direct limit over $l$ of \eqref{ref-2.12-29} we find that indeed
\[
\dirlim_l \Ext_{\Cscr_{R/J^l}}^p(M/MJ^l,N)= \Ext_{\Pro(\Cscr_t)}^{p+q}(M,N)
\]
The claim \eqref{ref-2.9-26} follows from Lemma \ref{ref-2.5.7-31} below together with \eqref{ref-2.8-25}. The claim \eqref{ref-2.10-27} follows from the degeneration of the spectral
sequence \eqref{ref-2.12-29}.
\end{proof}
We have used the next two lemmas.
\begin{lemmas} \label{ref-2.5.6-30}
Let $\Dscr$ be an $R$-linear abelian category and assume
  that $M\in \Dscr$ is a noetherian object. Then $\Tor_i^R(M,R/J^n)_n$ is an
essentially zero system for $i>0$.
\end{lemmas}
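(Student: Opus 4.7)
The plan is to show that for every $i \geq 1$ the graded object
\[
T := \bigoplus_n \Tor^R_i(M, R/J^n)
\]
is a noetherian graded $(\tilde R, \Dscr)$-module, where $\tilde R := \bigoplus_n J^n$ is the Rees ring of $R$, and then to exploit a compatibility between the $\tilde R$-action on $T$ and the transition maps in the inverse system to force them to vanish.

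For the noetherianity I would distinguish the cases $i = 1$ and $i \geq 2$. When $i = 1$, the long exact sequence attached to $0 \to J^n \to R \to R/J^n \to 0$ identifies $\Tor_1^R(M,R/J^n) = \ker(M \otimes_R J^n \to M)$, so $T$ is a graded subobject of $M \otimes_R \tilde R$, which is noetherian graded $(\tilde R, \Dscr)$ by Lemma \ref{ref-2.2.2-3}. When $i \geq 2$, the same long exact sequence yields $\Tor_i^R(M,R/J^n) \cong \Tor_{i-1}^R(M,J^n)$, and so $T \cong \bigoplus_n \Tor_{i-1}^R(M,J^n)$. To handle this I would pick a graded surjection of $R$-algebras $S := R[x_1, \ldots, x_d] \twoheadrightarrow \tilde R$ (sending $x_k$ to a generator $f_k$ of $J$) and a graded resolution $F_\bullet \to \tilde R$ over $S$ in which each $F_j$ is a finitely generated graded free $S$-module. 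Since $S$ is $R$-flat, so is each $F_j$, and one computes $T = H_{i-1}(M \otimes_R F_\bullet)$. Each $M \otimes_R F_j$ is a finite direct sum of shifts of $M \otimes_R S$, which is noetherian graded $(S, \Dscr)$ by Lemma \ref{ref-2.2.2-3}; the cohomology is therefore a subquotient in this category, hence noetherian, and since the $S$-action on it factors through $\tilde R$ we obtain a noetherian graded $(\tilde R, \Dscr)$-module.

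By noetherianity there exists some $N \geq 0$ such that $T$ is generated in degrees $\leq N$ as a graded $\tilde R$-module, i.e.\ the multiplication map $\bigoplus_{k \leq N} \tilde R_{m-k} \otimes_R T_k \to T_m$ is surjective for every $m$. The key compatibility to verify is that for any $k$ and $m \geq n$, the square
\[
\xymatrix{
\tilde R_{m-k} \otimes_R T_k \ar[r] \ar[d]_{1 \otimes \iota_{k,k-(m-n)}} & T_m \ar[d]^{\iota_{mn}} \\
\tilde R_{m-k} \otimes_R T_{k-(m-n)} \ar[r] & T_n
}
\]
commutes, where the horizontal arrows are the multiplication maps; this is immediate from functoriality of $\Tor$ applied to the commuting square of ideals relating multiplication by elements of $J^{m-k}$ with the inclusions $J^? \hookrightarrow J^{?-(m-n)}$. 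With the convention $J^p = R$ for $p \leq 0$ already used in the proof of Lemma \ref{ref-2.2.3-4}, one has $R/J^p = 0$, and hence $T_p = 0$, for all $p \leq 0$. For $m \geq n + N$ and $k \leq N$ we get $k - (m-n) \leq 0$, so the bottom-left corner vanishes and the diagram forces the composite $\tilde R_{m-k} \otimes_R T_k \to T_m \xrightarrow{\iota_{mn}} T_n$ to be zero. Summing over $k \leq N$ and using the surjectivity above, we conclude $\iota_{mn} = 0$, which is precisely the essentially-zero condition.

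The principal obstacle is the noetherianity step for $i \geq 2$: since $\tilde R$ is not finitely generated over $R$, it cannot be resolved directly by finitely generated $R$-free modules, and the polynomial-ring detour through $S$ is the essential technical device. Once that is in hand, the $\tilde R$-linearity of the transitions and the vanishing in non-positive degrees are formal consequences of functoriality.
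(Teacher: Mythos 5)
Your argument is correct, but it follows a genuinely different route from the one in the paper. The paper's proof is ``dual'': it replaces $\Dscr$ by the Ind-category of the smallest abelian subcategory containing $M$, so that there are enough injectives and $\Ext^i_{\Dscr}(M,-)$ commutes with filtered colimits, and then reduces the essential-zero claim to showing that $\dirlim_n \Hom_R(R/J^n,E)$ is again injective for $E$ injective --- a statement tested on inclusions of noetherian objects and settled by the Artin--Rees condition (Lemma \ref{ref-2.2.8-10}). You instead work directly with the bigraded $\Tor$ over the Rees ring: you prove that $\bigoplus_n \Tor_i^R(M,R/J^n)$ is a noetherian graded $(\tilde{R},\Dscr)$-object (as a graded subobject of $M\otimes_R \tilde{R}$ when $i=1$, and via the dimension shift to $\bigoplus_n\Tor_{i-1}^R(M,J^n)$ computed from a finite graded free resolution of $\tilde{R}$ over a polynomial cover $R[x_1,\dots,x_d]$ when $i\ge 2$ --- a detour that is indeed necessary, since $\bigoplus_n R/J^n$ itself is not a finitely generated $\tilde{R}$-module), and you then kill the transition maps by a degree count, using that the composite $R/J^k\xrightarrow{a}R/J^m\to R/J^n$ is already zero for $a\in J^{m-k}$ with $m-k\ge n$. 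Both proofs ultimately rest on the Hilbert-basis statement Lemma \ref{ref-2.2.2-3}, but yours stays entirely inside the graded-object formalism the paper has already set up, avoids Ind-objects and injectives, and yields the slightly stronger uniform conclusion that the transition map $T_m\to T_n$ vanishes whenever $m\ge n+N$ (the system is AR-zero, not merely essentially zero). The price is some bookkeeping you should make explicit: the $\tilde{R}$-linearity of the connecting isomorphisms and of the inclusion into $M\otimes_R\tilde{R}$ (needed to transport noetherianity to the module structure you actually use), and the fact that a noetherian graded $(\tilde{R},\Dscr)$-object is generated in degrees $\le N$; the former follows from naturality of the long exact sequence, and the latter from stabilization of the ascending chain of graded subobjects generated in degrees $\le N$, exactly as in the proof of Lemma \ref{ref-2.2.8-10}.
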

\begin{proof} We first replace $\Dscr$ with the smallest abelian subcategory of $\Dscr$
containing~$M$. This is a noetherian abelian category. Then we replace $\Dscr$ by its
category of $\Ind$-objects. Then $\Dscr$ becomes a locally noetherian Grothendieck category 
and $M$ is still a noetherian object in $\Dscr$. In particular $\Ext^i_\Dscr(M,-)$ commutes
with filtered colimits. 

As a Grothendieck category $\Dscr$ has enough injectives. Hence we have to show that
for an arbitrary injective object $E\in \Dscr$ we have
\[
\dirlim_n \Hom_\Dscr(\Tor_i^R(M,R/J^n),E)=0
\]
Now we have
\begin{align*}
\dirlim_n \Hom_\Dscr(\Tor_i^R(M,R/J^n),E)&=\dirlim_n \Ext^i_\Dscr(M,\Hom_R(R/J^n,E))\\
&=\Ext^i_\Dscr(M,\dirlim_n \Hom_R(R/J^n,E))
\end{align*}
Thus we have to show that  $F=\dirlim_n \Hom_R(R/J^n,E)$ is injective. In a locally
noetherian Grothendieck category we can test this on inclusions of noetherian
objects. Hence let $K\hookrightarrow M$ be such an inclusion. We need to prove that
$\dirlim_n \Hom(M/MJ^n,E)\r \dirlim_n\Hom(K/KJ^n,E)$ is an epimorphism, or equivalently
that the kernel of $K/KJ^n \r M/MJ^n$ is an essentially zero system. This follows from
the Artin-Rees condition (Lemma \ref{ref-2.2.8-10}).
\end{proof}
\begin{lemmas} \label{ref-2.5.7-31} Let $\Dscr$ be a noetherian $R$-linear category and assume that $N\in \Dscr$
is annihilated by $J^n$. Let $M\in \Dscr$. Then we have
\[
\Ext^i_{\Dscr}(M,N)=\dirlim_l \Ext^i_{\Dscr_{R/J^l}}(M/MJ^l,N)
\]
\end{lemmas}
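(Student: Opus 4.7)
The plan is to compute $\Ext^i_\Dscr(M,N)$ using an injective resolution of $N$ in $\Ind(\Dscr)$ adapted to the $J$-torsion nature of $N$, and then analyze a hypercohomology spectral sequence whose higher rows vanish in the direct limit.

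Since $M$ is noetherian in the noetherian category $\Dscr$, one has $\Ext^i_\Dscr(M,N) = \Ext^i_{\Ind(\Dscr)}(M,N)$ (and similarly over $R/J^l$) by \cite[Prop.\ 2.14]{lowenvdb1}, so I would work inside the locally noetherian Grothendieck category $\Ind(\Dscr)$, which has enough injectives. Take an injective resolution $N \to E^\bullet$ in $\Ind(\Dscr)$ and replace it by its $J$-torsion subcomplex $\Gamma_{J^\infty}(E^\bullet) = \dirlim_l \Gamma_{J^l}(E^\bullet)$, where $\Gamma_{J^l} = \Hom_R(R/J^l,-)$. In a locally noetherian Grothendieck category the $J^l$-torsion subobject of an injective is again injective and filtered colimits of injectives are injective; moreover, since $N$ is $J^n$-torsion, $R^q\Gamma_{J^\infty}(N)=0$ for $q>0$ by local cohomology vanishing, so $\Gamma_{J^\infty}(E^\bullet)$ remains an injective resolution of $N$ in $\Ind(\Dscr)$.

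Using that $M$ is noetherian (so $\Hom(M,-)$ commutes with filtered colimits) together with the adjunction $-/-J^l \dashv \iota_l$,
\[
\Hom_{\Ind(\Dscr)}(M, \Gamma_{J^l}(E^k)) = \Hom_{\Ind(\Dscr_{R/J^l})}(M/MJ^l, \Gamma_{J^l}(E^k)),
\]
and by exactness of filtered colimits,
\[
\Ext^i_\Dscr(M,N) = \dirlim_l H^i \Hom_{\Ind(\Dscr_{R/J^l})}(M/MJ^l, \Gamma_{J^l}(E^\bullet)).
\]
For each $l$, $\Gamma_{J^l}(E^\bullet)$ is a complex of injectives in $\Ind(\Dscr_{R/J^l})$ with cohomology $R^q\Gamma_{J^l}(N) = \Ext^q_R(R/J^l, N)$, so the hypercohomology spectral sequence reads
\[
E_2^{p,q}(l) = \Ext^p_{\Dscr_{R/J^l}}(M/MJ^l, R^q\Gamma_{J^l}(N)) \Rightarrow H^{p+q}\Hom_{\Ind(\Dscr_{R/J^l})}(M/MJ^l, \Gamma_{J^l}(E^\bullet)).
\]

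Finally I would pass to $\dirlim_l$ of this spectral sequence (filtered colimits being exact). The $q=0$ row yields $\dirlim_l \Ext^p_{\Dscr_{R/J^l}}(M/MJ^l, N)$ since $R^0\Gamma_{J^l}(N) = N$ for $l \geq n$. The main obstacle is killing the higher rows: $R^q\Gamma_{J^l}(N)=\Ext^q_R(R/J^l, N)$ is noetherian in $\Dscr$ (as a subquotient of $N^a$ coming from a finite free $R$-resolution of $R/J^l$), while its filtered colimit $R^q\Gamma_{J^\infty}(N)$ vanishes for $q>0$. These two facts together imply that for each $l$ there exists $l'\geq l$ with $R^q\Gamma_{J^l}(N) \to R^q\Gamma_{J^{l'}}(N)$ literally zero — any finite set of generators is eventually killed. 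Hence $\dirlim_l E_2^{p,q}(l)$ vanishes for $q>0$, since each class is annihilated by the zero map on the second argument. The limit spectral sequence thus degenerates at the $q=0$ row, yielding the desired identification.
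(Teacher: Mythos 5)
Your overall strategy (injective resolutions in $\Ind(\Dscr)$, torsion functors, a hypercohomology spectral sequence) is quite different from the paper's, and as written it has concrete gaps. First, the assertion that the $J^l$-torsion subobject of an injective of $\Ind(\Dscr)$ is again injective \emph{in $\Ind(\Dscr)$} is false: for $R=\ZZ$, $J=(p)$, the $p$-torsion subobject of the injective $\QQ/\ZZ$ is $\ZZ/p$, which is not an injective $\ZZ$-module. (It \emph{is} injective in $\Ind(\Dscr)_{R/J^l}$, being the image of an injective under the right adjoint of an exact inclusion, but that is not what your colimit argument needs.) The true statement is that $\Gamma_{J^\infty}(E)$ is injective, and proving it requires exactly the Artin--Rees argument of Lemma \ref{ref-2.5.6-30}; likewise the ``local cohomology vanishing'' $R^q\Gamma_{J^\infty}(N)=0$ is a consequence of that corrected statement, not an independent input. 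Second, and more seriously, the identification $R^q\Gamma_{J^l}(N)=\Ext^q_R(R/J^l,N)$ --- derived via injectives of $\Ind(\Dscr)$ on one side and via a finite free resolution of $R/J^l$ on the other --- is a balancing statement that requires $\Ext^q_R(X,E)=0$ for $E$ injective and $q>0$, and this fails in a general $R$-linear category: take $\Dscr=\mod(\ZZ/p)$ viewed as $\ZZ$-linear with $J=(p)$; then $\ZZ/p$ is injective in $\Ind(\Dscr)$ yet $\Ext^1_\ZZ(\ZZ/p,\ZZ/p)=\ZZ/p\neq 0$ there. Your noetherianity of the $E_2^{p,q}$-coefficients (the ``subquotient of $N^a$'' claim), which is what drives the eventual vanishing of the transition maps, is established for $\Ext^q_R(R/J^l,N)$ but is needed for $R^q\Gamma_{J^l}(N)$; these are genuinely different objects in this generality, so the mechanism for killing the higher rows is unsupported.

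By contrast the paper's proof is short and avoids all of this machinery: Artin--Rees (Lemma \ref{ref-2.2.8-10}) shows $(-\otimes_R R/J^l)_l$ is exact up to essentially zero systems, so $M\mapsto\dirlim_l\Ext^i_{\Dscr_{R/J^l}}(M/MJ^l,N)$ is a $\partial$-functor agreeing with $\Hom_\Dscr(-,N)$ in degree $0$; a class $a\in\Ext^i_{\Dscr_{R/J^l}}(M/MJ^l,N)$ effaced by $T\twoheadrightarrow M/MJ^l$ is effaced by $M'\to M$ where $M'$ is the pullback of $T$ along $M\to M/MJ^l$, and universality of Yoneda $\Ext$ concludes. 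Your route can probably be repaired (every fact you need does hold here, each ultimately via Artin--Rees), but the repairs amount to more work than the direct effaceability argument you are trying to replace.
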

\begin{proof} 
From the Artin-Rees condition (Lemma \ref{ref-2.2.8-10}) we know that $(-\otimes_{R} R/J^l)_l$ is exact
up to essentially zero systems.  From this we obtain in the usual way
that $M\mapsto \dirlim_l \Ext^i_{\Dscr_{R/J^l}}(M/MJ^l,N)$ is a $\partial$-functor with
values in $R$-modules.
To show that this $\partial$-functor coincides with $\Ext^i_{\Dscr}(M,N)$ it is
sufficient to prove this for $i=0$ and to show that any element of
$\dirlim_l \Ext^i_{\Dscr_{R/J^l}}(M/MJ^l,N)$ is effaceable for $i>0$. The case $i=0$
is trivial so assume that $a\in \Ext^i_{\Dscr_{R/J^l}}(M/MJ^l,N)$ represents an element
$\bar{a}$ of  $\dirlim_l \Ext^i_{\Dscr_{R/J^l}}(M/MJ^l,N)$ for $i>0$. 

There exists an epimorphism $T\r M/MJ^l$ which effaces $a$ for some
$T\in \Dscr_{R/J^l}$.  Let $M'$ be the pullback of $T$ for the map
$M\r M/MJ^l$. Then the epimorphism $M'/M'J^l\r M/MJ^l$ factors through $T$ and
hence effaces $a$. This means that $M'\r M$ effaces $\bar{a}$
 \def\qed{}\end{proof}
 \begin{lemmas} \label{ref-2.5.8-32} Assume that $\Cscr$ is formally flat and that $\Cscr_{R/J}$ is $\Ext$-finite and 
 \label{ref-2.5.8-33} 
 let $M,N\in \widehat{\Cscr}$. Then 
 \[
 \invlim_n \Ext^i_{\Pro(\Cscr_t)}(M,N/NJ^n)\in \mod(R)\,\hat{}\quad(\cong \mod(\hat{R}))
 \]
 \end{lemmas}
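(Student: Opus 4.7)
The plan is to verify that the pro-object $\invlim_n E_n$, with $E_n := \Ext^i_{\Pro(\Cscr_t)}(M,N/NJ^n)$, lies in $\mod(R)\,\hat{}$ by producing a $J$-adic inverse system equivalent to $(E_n)_n$ in $\Pro(\mod(R))$ and invoking Proposition \ref{ref-2.2.6-8}. By Lemma \ref{ref-2.5.5-24}, each $E_n$ is a finitely generated $R/J^n$-module, and the projections $N/NJ^n \r N/NJ^{n-1}$ provide transition maps making $(E_n)_n$ an inverse system in $\mod(R)$.

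First, for fixed $k \ge 1$, I would study the tower $(E_n/E_n J^k)_{n\ge k}$ of finitely generated $R/J^k$-modules. Since $E_k$ is noetherian over $R/J^k$, the descending chain of submodules $\im(E_n/E_nJ^k\r E_k)\subset E_k$ stabilizes in $n$; let $F_k$ denote the stable image. A standard Mittag-Leffler argument then yields $\invlim_n(E_n/E_nJ^k)=F_k$ in both $\mod(R)$ and $\Pro(\mod(R))$, and $F_k$ is a finitely generated $R/J^k$-module.

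Second, I would verify that $(F_k)_k$ forms a $J$-adic inverse system, i.e.\ that the canonical map $F_k/F_k J^{k-1}\r F_{k-1}$ is an isomorphism. Surjectivity follows from right-exactness of $-\otimes_R R/J^{k-1}$ applied to the stabilized tower, while injectivity requires comparing the Mittag-Leffler stabilizations at levels $k$ and $k-1$, most likely via the Artin-Rees condition (Lemma \ref{ref-2.2.8-10}) to bound the gap between the filtrations $E_nJ^{k-1}$ and $E_nJ^k$.

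Finally, since $E_n J^n=0$, the inner limit $\invlim_k (E_n/E_n J^k)$ equals $E_n$ for each $n$, and exchanging filtered inverse limits in $\Pro(\mod(R))$ gives
\[
\invlim_n E_n\;=\;\invlim_n\invlim_k (E_n/E_n J^k)\;=\;\invlim_k\invlim_n (E_n/E_n J^k)\;=\;\invlim_k F_k.
\]
Applying Proposition \ref{ref-2.2.6-8} to the $R$-linear category $\mod(R)$, the $J$-adic system $(F_k)_k$ corresponds under $\Psi$ to an object of $\widehat{\mod(R)}$, which by Proposition \ref{ref-2.2.7-9} equals $\mod(\hat R)$. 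The main obstacle is establishing the $J$-adic compatibility $F_k/F_k J^{k-1}\cong F_{k-1}$: injectivity is not automatic from Mittag-Leffler alone and is the technical heart of the argument.
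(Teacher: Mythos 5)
Your overall strategy --- exhibit $\invlim_n E_n$ as $\Psi$ of a $J$-adic inverse system and invoke Propositions \ref{ref-2.2.6-8} and \ref{ref-2.2.7-9} --- is reasonable in outline, but the execution has a gap that occurs \emph{earlier} than the one you flag. The identification $\invlim_n(E_n/E_nJ^k)=F_k$, with $F_k$ the stable image inside the bottom term $E_k$, does not follow from Mittag--Leffler. Consider the strict system of finitely generated $R/J$-modules $(R/J)^n\r(R/J)^{n-1}$ given by projection: it satisfies ML (all images are everything), its stable image in the bottom term is $R/J$, yet it is not isomorphic to a constant pro-object and its limit, whether in $\Pro(\mod(R))$ or in $\Mod(R)$, is not that stable image. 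ML controls $R^1\invlim$ and surjectivity onto stable images, but to conclude that $(E_n/E_nJ^k)_n$ is an essentially constant pro-object you must show the transition maps eventually induce isomorphisms on stable images at \emph{every} level of the tower, uniformly --- and nothing in your argument bounds the growth of the $E_n$. Combined with the unproved compatibility $F_k/F_kJ^{k-1}\cong F_{k-1}$, which you yourself identify as the ``technical heart'', the proof is not complete.

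The missing ingredient in both places is a finiteness statement across the whole tower, and this is exactly what the paper's Theorem \ref{ref-2.3.1-15} supplies: the actual proof simply applies that theorem to the $\partial$-functor $\Ext^i_{\Pro(\Cscr_t)}(M,-)\colon\Cscr_t\r\mod(R)_t$ (well defined and landing in $\mod(R)_t$ by Lemma \ref{ref-2.5.5-24}), so that $\invlim_n\Ext^i_{\Pro(\Cscr_t)}(M,N/NJ^n)=\hat T(N)\in\widehat{\mod(R)_t}\cong\mod(\hat R)$. Inside that theorem, the degeneration of the spectral sequence of the $J$-adically filtered object, forced by noetherianity of $\bigoplus_k T^i(NJ^k/NJ^{k+1})$ as a graded $\gr_J R$-object (Lemma \ref{ref-2.3.2-17}), is precisely what makes all the stabilizations you need happen uniformly in $n$ and $k$. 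I would either cite Theorem \ref{ref-2.3.1-15} directly or reprove its spectral-sequence content in this special case; elementary ML bookkeeping alone will not close the gap.
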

 \begin{proof} This follows by applying Theorem \ref{ref-2.3.1-15} to the $\partial$-functor
 \[
 \Ext^i_{\Pro(\Cscr_t)}(M,-)_i:\Cscr_t\r \mod(R)_t
 \]
  To construct this functor we use Lemma \ref{ref-2.5.5-24}. 
 \end{proof}
 \begin{proof}[Proof of Proposition \ref{ref-2.5.3-22}]
 Let $P_\bullet$ be a projective resolution of
 $M$ in $\Pro(\Cscr_t)$. Since $\Cscr_t$ is flat by Proposition \ref{ref-2.4.2-19} the $P_m$ are
 $R$-flat (see \cite[Prop.\ 3.4]{lowenvdb1} for the dual version).  We compute
 \begin{align*}
 \Ext^i_{\Pro(\Cscr_t)}(M,N)&=H^i(\Hom_{\Pro(\Cscr_t)}(P_\bullet,\invlim_n N/NJ^n))\\
 &=H^i(\invlim_n \Hom_{\Pro(\Cscr_t)}(P_\bullet, N/NJ^n))
 \end{align*}
 We need to exchange $H^i$ and $\invlim_n$. This is possible if the terms of the
 inverse system of complexes $\Hom_{\Pro(\Cscr_t)}(P_\bullet, N/NJ^n)$ as well as its cohomology
 satisfy the Mittag-Leffler condition. For the terms this follows from the projectivity of
 $P_m$. For the cohomology which is equal to $\Ext^i_{\Pro(\Cscr_t)}(M,N/NJ^n)$ we 
 invoke Lemma \ref{ref-2.5.8-32} together with Lemma \ref{ref-2.5.9-34} below. 

 Assuming this we now obtain
 \begin{align*}
 \Ext^i_{\Pro(\Cscr_t)}(M,N)&=\invlim_n H^i( \Hom_{\Pro(\Cscr_t)}(P_\bullet, N/NJ^n))\\
 &=\invlim_n\Ext^i_{\Pro(\Cscr_t)}(M,N/NJ^n)
 \end{align*}
This implies \eqref{new-4} via \eqref{ref-2.9-26}. Furthermore we obtain 
\eqref{new-5} via \eqref{ref-2.10-27}. Finally we obtain \eqref{new-6} from \eqref{ref-2.9-26}.
\end{proof}
The following lemma was used. 
\begin{lemmas} 
\label{ref-2.5.9-34} Let $(U_n)_n$ be an inverse system in an $R$-linear noetherian
abelian category $\Dscr$ such
  that $U_nJ^n=0$ and such that the pro-object $(U_n)_n$ lies in
  $\widehat{\Dscr}$. Then $(U_n)_n$ satisfies the Mittag-Leffler
  condition.
\end{lemmas}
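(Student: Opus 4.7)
The plan is to exploit the fact that, as a pro-object, $U\overset{\text{def}}{=}(U_n)_n$ admits two different presentations in $\Pro(\Dscr)$: the given strict system $(U_n)_n$, and the canonical $J$-adic system $(U/UJ^n)_n$ which exists because $U\in\widehat{\Dscr}$. Comparing these will force the desired stability of images.

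First, I would observe that for each $m$, since $U_mJ^m=0$, the projection $U\to U_m$ in $\Pro(\Dscr)$ must factor through the cokernel $U\to U/UJ^m$, giving a canonical morphism $\pi_m: U/UJ^m \to U_m$ in $\Dscr$ (both source and target lie in $\Dscr$). These $\pi_m$ are compatible with the transition maps and so assemble into a strict morphism of inverse systems $\pi:(U/UJ^n)_n\to (U_n)_n$. Because both systems represent $U$ in $\Pro(\Dscr)$ and $\pi$ intertwines the two canonical maps from $U$, the morphism $\pi$ is an isomorphism in $\Pro(\Dscr)$.

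Next, I would use the standard description of morphisms in Pro-categories: an inverse $\sigma$ to $\pi$ is given at level $m$ by some $\sigma_m: U_{n(m)}\to U/UJ^m$ for suitable $n(m)\ge m$, and the identity $\pi\sigma=\id_{(U_n)}$ translates into the existence of $k\ge n(m)$ for which the transition $U_k\to U_m$ equals the composite $U_k\to U_{n(m)}\xrightarrow{\sigma_m} U/UJ^m\xrightarrow{\pi_m} U_m$. This means the transition $U_k\to U_m$ factors through $\pi_m$, so $\im(U_k\to U_m)\subseteq \im(\pi_m)$.

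Finally, I would observe the reverse inclusion: for any $n\ge m$, the map $U\to U_m$ in $\Pro(\Dscr)$ can be computed either as $U\to U/UJ^m\xrightarrow{\pi_m} U_m$ (whose image is $\im \pi_m$, since $U\to U/UJ^m$ is an epi) or as $U\to U_n\to U_m$, which forces $\im \pi_m\subseteq \im(U_n\to U_m)$. Combining the two inclusions, for all $n'\ge k$ we have $\im(U_{n'}\to U_m)=\im \pi_m$, which is exactly the Mittag-Leffler condition. The only delicate point is the translation of ``$\pi$ is an iso in $\Pro(\Dscr)$'' into the level-wise factorization statement; this is the standard unwinding of \eqref{ref-2.1-2}, and is the step that carries all the real content.
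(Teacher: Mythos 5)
Your proof is correct and follows essentially the same route as the paper: both compare the given system $(U_n)_n$ with the $J$-adic system $(U/UJ^n)_n$ via the canonical maps $U/UJ^n\to U_n$ and use $U\in\widehat{\Dscr}$ to see that these assemble to an isomorphism in $\Pro(\Dscr)$. The paper phrases the conclusion by saying the kernels and cokernels of these maps form essentially zero systems and that Mittag--Leffler "easily" follows, whereas you make that last deduction explicit by unwinding the inverse morphism level by level; this is a presentational difference only.
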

\begin{proof} Let $U$ be the pro-object in $(U_n)_n$. Define $C_n$,
  $K_n\in \Dscr$ as the kernel and cokernel of the natural maps.
\[
U/UJ^n\r U_n
\]
Taking inverse limits in $\Pro(\mod(R))$ we see that $(U_n)_n$ and
$(C_n)_n$ are zero pro-objects, or equivalently they are essentially
zero systems. From one easily deduces that $(U_n)_n$ satisfies
the Mittag-Leffler condition.
\end{proof}
\subsection{The complete derived category}
We use the same notations as above. In particular $(R,J)$ is a $J$-adic noetherian
ring and $\Cscr$ is an $R$-linear noetherian abelian category. We define $D_c(\Cscr)$ as
the full subcategory of $D(\Pro(\Cscr_t))$ of complexes whose cohomology lies 
in $\widehat{\Cscr}$. Thus $D_c(\Cscr)$ has a $t$-structure whose heart is
$\widehat{\Cscr}$. 
Then the completed $\Ext$-groups for $M,N\in \widehat{\Cscr}$ may be reinterpreted as
\[
{}`\Ext_{\widehat{\Cscr}}(M,N)=\Hom_{D_c(\Cscr)}(M,N[n])
\]
In the case that $\Cscr$ is the category of torsion $l$-adic constructible
sheaves it would be interesting to compare this derived category
to the standard derived category of $l$-adic sheaves \cite{Behrend1,BBD,Ek}.

\section{Formal deformations of abelian categories}
\label{ref-3-35}
\def\Def{\operatorname{Def}}
Let $R$ be a noetherian $J$-adic ring and $\Cscr$ a flat $R/J$-linear noetherian abelian category. Then we
define an $R$-deformation of $\Cscr$ to be a formally flat complete $R$-linear abelian category $\Dscr$
together with an equivalence $\Dscr_{R/J}\cong \Cscr$. It follows from the above discussion
that $\Dscr$ is specified up to equivalence by specifying the flat 
$R/J^n$-linear categories $\Dscr_n=\Dscr_{R/J^n}$ together with the equivalences
(isomorphisms in this case) $\Dscr_m=\Dscr_{n,R/J^m}$ for $n\ge m$ and $\Dscr_1\cong \Cscr$. 

Denote by $\Def_R(\Cscr)$ the class of $R$-deformations of $\Cscr$. This is a 2-groupoid. 
The observations in the previous paragraph may be used to construct a 2-equivalence
\[
\Def_R(\Cscr)\cong 3{\text{-}}\invlim_n \Def_{R/J^n}(\Cscr)
\]
We leave it to the interested reader to formalize this statement. It
will not be used in this form.
\section{Ampleness}
\label{ref-4-36}
We define what we mean by a strongly ample sequence. This is stronger than
strictly necessary but easier to work with.

Let $\Escr$ an noetherian abelian category. For us a sequence $(O(n))_{n\in
  \ZZ}$ of objects in $\Escr$ is strongly ample if the following conditions
hold
\begin{itemize}
\item[(A1)] For all  $M\in \Escr$ and for all $n$
there is an epimorphism $\oplus_{i=1}^t O(-n_i)\r M$ with $n_i\ge n$.
\item[(A2)] For all $M\in \Escr$ and for all $i>0$ one has
  $\Ext^i_\Escr(O(-n),M)=0$ for $n\gg 0$.
\end{itemize}
A strongly ample sequence $(O(n))_{n\in \ZZ}$ in $\Escr$ is ample in
the sense of \cite{Polishchuk1}. Hence using the methods of \cite{AZ} or \cite{Polishchuk1} one
obtains 
$
\Escr\cong \qgr(A)
$
if $\Escr$ is $\Hom$-finite, where $A$ is the noetherian $\ZZ$-algebra $\oplus_{ij} \Hom_\Escr(O(-j),O(-i))$.

\medskip

Below we fix a complete noetherian $J$-adic ring $R$. The following is a version
of Grothendieck's existence theorem \cite{EGA31}.
\begin{proposition}
\label{ref-4.1-37}
Let $\Escr$ be an $\Ext$-finite $R$-linear noetherian category with a
strongly ample sequence $(O(n))_n$. Then $\Escr$ is complete and
furthermore if $\Escr$ is flat we have for $M,N\in\Escr$:
\begin{equation}
\label{ref-4.1-38}
\Ext^i_\Escr(M,N)=`\Ext^i_\Escr(M,N)
\end{equation}
\end{proposition}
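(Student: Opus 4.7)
The plan is to follow the template of Grothendieck's existence theorem, using the strongly ample sequence $(O(-n))_n$ as a surrogate for projective generators and exploiting that $\Ext$-finiteness makes every $\Hom$ or $\Ext$ group in $\Escr$ a finitely generated, hence automatically $J$-adically complete, $R$-module. The whole argument will be bootstrapped from a single comparison statement on the ample objects themselves.

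\textbf{Comparison on ample objects.} For fixed $n$, apply Theorem \ref{ref-2.3.1-15} to the $\partial$-functor $T^i := \Ext^i_\Escr(O(-n),-) \colon \Escr \to \mod R$, obtaining an extension $\hat T^i \colon \widehat{\Escr} \to \widehat{\mod R} = \mod R$ (the identification via Proposition \ref{ref-2.2.7-9} and completeness of $R$) with $\hat T^i(\Phi N) = \invlim_k \Ext^i_\Escr(O(-n), N/NJ^k)$. The core claim is that for $n \gg 0$ (depending on $N$) this agrees with $\Ext^i_\Escr(O(-n), N)$: in degree $0$ via completeness of the finitely generated $R$-module $\Hom_\Escr(O(-n), N)$; in positive degrees by combining (A2) with the spectral sequence in the proof of Theorem \ref{ref-2.3.1-15}, whose $E_1$ term is $\Ext^p_\Escr(O(-n), \gr_{F_J} N)$. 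Since $\gr_{F_J} N$ is noetherian over the finitely generated graded ring $\gr_J R$ (Lemma \ref{ref-2.2.2-3}), Lemma \ref{ref-2.3.2-17} together with (A2) applied to finitely many generating graded pieces will deliver the uniform-in-$k$ vanishing that kills $E_1$.

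\textbf{Completeness of $\Escr$.} Fully faithfulness of $\Phi$ on finite direct sums of $O(-n_i)$ is immediate from the comparison; for arbitrary $M, N \in \Escr$, present $M$ via (A1) as $P_1 \to P_0 \to M \to 0$ with $P_k$ a finite direct sum of $O(-n_{ki})$ and $n_{ki} \gg 0$, and compare the resulting left-exact sequences under $\Hom_\Escr(-,N)$ and $\Hom_{\widehat{\Escr}}(-,\Phi N)$ (exactness of $\Phi$ from Proposition \ref{ref-2.2.9-12}). For essential surjectivity, take $M \in \widehat{\Escr}$ and apply (A1) to $M/MJ \in \Escr$ to get $\bigoplus O(-n_i) \twoheadrightarrow M/MJ$; lift inductively through $M/MJ^{k+1} \to M/MJ^k$ using the same uniform $\hat T^1$-vanishing to remove obstructions, then take inverse limits to obtain a map $P_0 := \bigoplus_i \Phi O(-n_i) \to M$ in $\widehat{\Escr}$, which is an epimorphism by Nakayama (Lemma \ref{ref-2.2.13-14}); repeat for its kernel. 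The resulting presentation $P_1 \to P_0 \to M$ lies in the image of $\Phi$ by fully faithfulness on ample direct sums, so $M \cong \Phi(\tilde M)$ with $\tilde M$ the cokernel in $\Escr$.

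\textbf{Ext equality and main obstacle.} Under flatness of $\Escr$, take an ample resolution $P_\bullet \to M$ in $\Escr$ with each $P_j$ a direct sum of $O(-n_{ji})$ chosen via (A2) so that $\Ext^{>0}_\Escr(P_j, N) = 0$; by the comparison together with Lemmas \ref{ref-2.5.5-24}--\ref{ref-2.5.7-31}, also ${}`\Ext^{>0}(\Phi P_j, \Phi N) = 0$, so $\Phi P_\bullet \to \Phi M$ is an appropriate resolution in $\Pro(\Escr_t)$. Both $\Ext^i_\Escr(M, N)$ and ${}`\Ext^i_\Escr(\Phi M, \Phi N)$ are then computed as the $i$-th cohomology of $\Hom_\Escr(P_\bullet, N) = \Hom_{\Pro(\Escr_t)}(\Phi P_\bullet, \Phi N)$ (the equality again from fully faithfulness on ample direct sums), yielding \eqref{ref-4.1-38}. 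The main technical obstacle throughout is precisely the uniform-in-$k$ vanishing of $\Ext^p_\Escr(O(-n), NJ^k/NJ^{k+1})$ for $n \gg 0$ and $p \ge 1$: (A2) gives it pointwise, and one must upgrade this to uniformity across the infinite family of graded pieces by exploiting the noetherian graded structure of $\gr_{F_J} N$ over $\gr_J R$. Once that uniform bound is in hand, the remainder of the proof is essentially diagram chasing.
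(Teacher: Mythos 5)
Your proposal is correct and follows essentially the same route as the paper: the heart of both arguments is upgrading the pointwise vanishing in (A2) to vanishing of $\Ext^i_\Escr(O(-m),NJ^k)$ (equivalently of the graded pieces $NJ^k/NJ^{k+1}$) \emph{uniformly} in $k$, via noetherianity of the Rees module over the Rees ring and the induction of Lemma \ref{ref-2.3.2-17}, after which completeness is obtained by reducing to $M=O(-m)$ with (A1) and invoking $J$-adic completeness of the finitely generated module $\Hom_\Escr(O(-m),N)$, and the $\Ext$-comparison follows by effaceability in the first argument. The only substantive differences are that you spell out essential surjectivity of $\Phi$ (which the paper leaves implicit), and that you should also record explicitly the degree-zero statement $\Hom_\Escr(O(-m),NJ^k)=\Hom_\Escr(O(-m),N)J^k$ for $m$ large (the paper's \eqref{ref-4.3-42}), which is what identifies $\invlim_k\Hom_\Escr(O(-m),N/NJ^k)$ with the $J$-adic completion in degree zero and which follows from the same graded-noetherian induction you already use.
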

\begin{proof}
\begin{step}
\label{ref-1-39}
We first claim that $\Escr$ satisfies Nakayama's lemma. This would
follow from Lemma \ref{ref-2.2.13-14} once we knew $\Escr$ is complete but we are not there
yet.

Let $M\in\Escr$ me such that $MJ=M$. Pick generators for $a_1,\ldots,a_n$ for $J$
and consider the corresponding epimorphisms
\[
M^{\oplus n}\xrightarrow{(a_i)_i}M\r 0
\]
Applying $\Hom_\Escr(O(-m),-)$ for $m$ large we get an epimorphism
\[
\Hom_{\Escr}(O(-m),M)^{\oplus n}\xrightarrow{(a_i)_i}\Hom_{\Escr}(O(-m),M)\r 0
\]
and thus by Nakayama's lemma for $R$ and $\Ext$-finiteness
\[
\Hom_{\Escr}(O(-m),M)=0
\]
for large $m$. It then follows from (A1) that $M=0$. 
\end{step}
\begin{step}
\label{ref-2-40} Let $M$ be an object in $\Escr$. We claim that for $i>0$ and for 
large $m$ we have
\begin{equation}
\label{ref-4.2-41}
\Ext^i_\Escr(O(-m),MJ^n)=0
\end{equation}
for all $n$ and furthermore for large $m$ we also have
\begin{equation}
\label{ref-4.3-42}
\Hom_\Escr(O(-m),MJ^n)=\Hom_\Escr(O(-m),M)J^n
\end{equation}
for all $n$. 

Put $\tilde{R}=R\oplus J\oplus J^2\cdots$. According to Lemma \ref{ref-2.2.2-3}
$\tilde{M}=M\oplus MJ\oplus MJ^2\cdots$ is a noetherian graded $(\tilde{R},\Escr)$-object.

Now we follow a similar inductive method as in the proof of Lemma
\ref{ref-2.3.2-17}. We only give a sketch.  Put $W=\tilde{M}$. We need to prove
$\Ext^i_{\Escr}(O(-m),W)=0$ for large $m$. We deduce this from
the noetherian property of $W$.  Let $d$ be the minimal number of
generators of $\tilde{R}$ over $R$ (i.e.\ the number of generators
of the $R$-ideal~$J$). If $d=0$ there is nothing to prove. If $d>0$
then we pick a homogeneous generator~$t$ of strictly positive degree in $\tilde{R}$ and we
reduce to the cases $Wt=0$ and $W$ is $t$-torsion free.  The first
case is clear by induction and in the second case we obtain (also by induction)
$\Ext^i_{\Escr}(O(-m),W)t=\Ext^i_{\Escr}(O(-m),W)$ for large $m$.  The
conclusion now follows from the fact that $t$ has strictly positive degree. 

To prove \eqref{ref-4.3-42} we have to prove that
$\Hom_\Escr(O(-m),W)$ is generated in degree zero for $m$ large. We
use induction based on the noetherian property of $W$ and the fact
that $W$ is generated in degree zero. If $d=0$ then $W$ must be
concentrated in degree zero and there is nothing to prove. If $d\neq
0$ then we pick $t$ as above. Using $\Ext$-vanishing (which is already
proved) we obtain for $m$ large an exact sequence
\[
\Hom_{\Escr}(O(-m),W)\xrightarrow{\times t} \Hom_{\Escr}(O(-m),W)\r \Hom_\Escr(O(-m),W/Wt)\r 0
\]
We now finish by induction and the fact that $t$ has strictly positive degree.
\end{step}
\begin{step} 
Now we show that the functor
\[
\Phi:\Escr\r \widehat{\Escr}
\]
is an equivalence.  If $M$, $N\in \Escr$ then since $\Phi(M)$ is the pro-object
$M/MJ^n$ and similarly for $N$ we deduce from \eqref{ref-2.1-2}
we have
\begin{align*}
\Hom_{\widehat{\Escr}}(\Phi(M),\Phi(N))&=\invlim_k \dirlim_l \Hom_{\Escr_{R/J^l}}(M/MJ^l,N/NJ^k)\\
&=\invlim_k \Hom_{\Escr}(M,N/NJ^k)
\end{align*}
Thus we have to show that the natural map
\begin{equation}
\label{ref-4.4-43}
\Hom_\Escr(M,N)\r \invlim_n \Hom_\Escr(M,N/NJ^n)
\end{equation}
is an isomorphism.  Using left exactness of $\invlim$ and (A1) we immediately
reduce to $M=\Oscr(-m)$ where $m$ may be chosen arbitrarily large.

We first observe that \eqref{ref-4.4-43}  must be a monomorphism. Suppose on the contrary
that there is an $f:M\r N$ such that $K=\im f\subset NJ^n$ for all $n$. Then
by Lemma \ref{ref-2.2.8-10} we have $K=KJ$. Hence $K=0$ by Step 1 which implies $f=0$. 

Now we prove that \eqref{ref-4.4-43} is an epimorphism when $M=O(-m)$ for
$m$ large. Suppose we are given a a compatible system of maps $f_n:M\r
N/NJ^n$. By \eqref{ref-4.2-41} the $f_n$ may be lifted to maps
$f'_n:M\r N$ such that $\im(f'_n-f'_{n+1})\subset NJ^n$. Using \eqref{ref-4.2-41}
this implies $f'_n-f'_{n+1}\in \Hom_\Escr(M,N)J^n$. Since $\Hom_\Escr(M,N)\in \mod(R)$ the
limit $f=\lim_n f'_n$ exists. It has the property that the image of $f$ in $\Hom_\Escr(M,N/NJ^n)$
is equal to $f_n$. This proves what we want. 
\end{step}
\begin{step} Finally we prove \eqref{ref-4.1-38}. To show
that $`\Ext=\Ext$ in $\Escr$ it is sufficient to show that
$`\Hom=\Hom$ and furthermore that $`\Ext$ is effaceable in its first
argument.
  The fact that $`\Hom=\Hom$ is the fact that  \eqref{ref-4.4-43} is an isomorphism.
 So let us show that $`\Ext$ is
effaceable.

Let $N$ be an object in $\Escr$. If $m$ is large and $i>0$ then it
follows from \eqref{ref-4.2-41} that $\Ext^i_{\Escr}(O(-m),N/NJ^n)=0$ for all $n$. 
Hence $`\Ext^i_{\Escr}(O(-m),N)=0$ by \eqref{new-4}. Now let $M\in \Escr$. To efface $`\Ext^i_\Escr(M,N)$
we take an epimorphism $\bigoplus_{i=1}^t O(-n_i)\r M$ with the $n_i$ sufficiently large. This finishes
the proof.\qed
\end{step}
\def\qed{}\end{proof}
Now we fix an
$\Ext$-finite flat $R/J$-linear noetherian abelian category $\Cscr$ and 
an $R$-deformation $\Dscr$ of $\Cscr$. 
\begin{theorem}
\label{ref-4.2-44}
Let $O(n)_n$ be a sequence of $R$-flat objects in
$\Dscr$. Then 
\begin{enumerate}
\item $(O(n)/O(n)J)_n$ is strongly ample in $\Cscr$ if and only if $O(n)_n$ is
  strongly ample in $\Dscr$.
\item If $(O(n)/O(n)J)_n$ is strongly ample then $\Dscr$ is flat (instead of just formally flat).
\item $\Dscr$ is $\Ext$-finite as $R$-linear category. 
\end{enumerate}
\end{theorem}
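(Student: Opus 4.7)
The plan is to prove (1) first; (2) then follows immediately and (3) comes from Proposition~\ref{ref-2.5.3-22} via a universal-$\partial$-functor argument. The easy ``if'' direction of (1) is immediate from change of rings: for any $M\in\Cscr=\Dscr_{R/J}$, (A1) in $\Dscr$ gives an epi $\bigoplus O(-n_i)\to M$ that stays epi after $-\otimes_R R/J$; and the $R$-flatness of $O(-n)$ together with the change-of-rings identity $\Ext^i_\Cscr(O(-n)/O(-n)J,N)=\Ext^i_\Dscr(O(-n),N)$ for $N\in\Cscr$ transfers (A2).

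\textbf{The converse of (1).} Fix $M\in\Dscr$. For (A1), apply (A1) in $\Cscr$ to $M/MJ$ to obtain an epi $\bar f:P/PJ\to M/MJ$ with $P=\bigoplus_{i=1}^tO(-n_i)$ and the $n_i$'s to be chosen large. By completeness, giving $P\to M$ is the same as giving a compatible system $f_k:P/PJ^k\to M/MJ^k$; the obstruction to extending $f_k$ to $f_{k+1}$ sits in $\Ext^1_{\Dscr_{R/J^{k+1}}}(P/PJ^{k+1},MJ^k/MJ^{k+1})$, which by change of rings (source flat over $R/J^{k+1}$, target $J$-annihilated) equals $\Ext^1_\Cscr(P/PJ,MJ^k/MJ^{k+1})$. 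Now $\gr_J M$ is a noetherian graded $(\gr_J R,\Cscr)$-object, and Lemma~\ref{ref-2.3.2-17} applied to $T^1=\Ext^1_\Cscr(O(-m)/O(-m)J,-):\Cscr\to\mod(R/J)$ makes the total graded obstruction module noetherian over $\gr_J R$. An induction on the number of $R/J$-generators of $\gr_J R$, identical in form to Step~2 of the proof of Proposition~\ref{ref-4.1-37} and using (A2) in $\Cscr$ for the base case, forces it to vanish for $m$ large. Assembling the $f_k$'s yields $f:P\to M$, which is epi by Nakayama (Lemma~\ref{ref-2.2.13-14}) applied to $\coker f$. For (A2) in $\Dscr$, the same Rees induction with $T^i=\Ext^i_\Cscr(O(-m)/O(-m)J,-)$ for $i\ge 1$ gives $\Ext^i_\Dscr(O(-m),MJ^j/MJ^{j+1})=0$ for $m$ large, uniformly in $j$; devissage along $0\to MJ^j/MJ^{j+1}\to M/MJ^{j+1}\to M/MJ^j\to 0$, Mittag--Leffler of the $\Hom$-system (from the same $\Ext^1$-vanishing), and $M=\invlim_k M/MJ^k$ then deliver $\Ext^i_\Dscr(O(-m),M)=0$.

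\textbf{Parts (2) and (3).} For (2), (A1) in $\Dscr$ presents every $M\in\Dscr$ as a quotient of a finite direct sum of $R$-flat $O(-n_i)$'s, which is itself $R$-flat, effacing $\Tor^R_{>0}(M,-)$ and proving $\Dscr$ is flat. For (3), I claim $\Ext^i_\Dscr(M,N)={}`\Ext^i_\Dscr(M,N)$ for all $M,N\in\Dscr$. Indeed, completeness of $\Dscr$ makes $\widehat{\Dscr}=\Dscr$ a full subcategory of $\Pro(\Dscr_t)$, so ${}`\Hom=\Hom$ automatically. Both $\Ext^i_\Dscr(-,N)$ and ${}`\Ext^i_\Dscr(-,N)$ are effaceable $\partial$-functors in the first argument: $\Ext$ by Yoneda pullback through an epi, and ${}`\Ext$ by (A1) in $\Dscr$ combined with the vanishing ${}`\Ext^i_\Dscr(O(-m),N)=0$ for $m$ large, which follows via formula~\eqref{new-5} from the uniform vanishing of $\Ext^i_\Dscr(O(-m),N/NJ^k)$ proved above. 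Two effaceable $\partial$-functors agreeing in degree~$0$ are isomorphic. Proposition~\ref{ref-2.5.3-22} applied to the formally flat $\Dscr$ with its $\Ext$-finite reduction $\Cscr$ then places ${}`\Ext^i_\Dscr(M,N)$ in $\mod\hat R$, establishing (3).

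\textbf{Main obstacle.} The recurring technical heart is the Rees-style induction --- a direct replay of Step~2 of the proof of Proposition~\ref{ref-4.1-37} --- used to promote the pointwise vanishing of $\Ext^i_\Cscr(O(-m)/O(-m)J,-)$ on individual objects of $\Cscr$ into vanishing across the whole graded object $\gr_J M$. The change-of-rings step is what keeps the argument inside $\Cscr$, whose $\Ext$-finiteness makes Lemma~\ref{ref-2.3.2-17} applicable with the noetherian target $\mod(R/J)$; if one tried to work in $\Dscr$ directly, the corresponding target would not be obviously noetherian.
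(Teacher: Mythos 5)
Your proposal follows the paper's proof in all essentials: the easy direction of (1) via the change-of-rings identity \eqref{ref-4.5-45} for flat objects; the Rees-graded induction (Lemma \ref{ref-2.3.2-17} / Step 2 of Proposition \ref{ref-4.1-37}) applied to the noetherian graded $(\gr_J R,\Cscr)$-object $\gr_J M$ to get $\Ext$-vanishing uniformly in the graded degree; the obstruction-theoretic lifting of the epimorphism coming from (A1) in $\Cscr$ followed by Nakayama for (A1) in $\Dscr$; flatness of $\Dscr$ from (A1); and the comparison of ${}`\Ext$ with $\Ext$ by effaceable $\partial$-functors for (3).

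There is, however, one step whose stated justification does not work. In your (A2) paragraph you pass from the uniform vanishing of $\Ext^i_\Dscr(O(-m),M/MJ^k)$ directly to $\Ext^i_\Dscr(O(-m),M)=0$ by invoking Mittag--Leffler and ``$M=\invlim_k M/MJ^k$''. But $M$ is the inverse limit of the $M/MJ^k$ only in $\Pro(\Dscr_t)$, not in $\Dscr$, and ordinary Yoneda $\Ext$ in $\Dscr$ has no a priori compatibility with that pro-limit. What the Mittag--Leffler argument actually computes is $\invlim_k\Ext^i(O(-m),M/MJ^k)$, which by Proposition \ref{ref-2.5.3-22} is ${}`\Ext^i_\Dscr(O(-m),M)$, not $\Ext^i_\Dscr(O(-m),M)$. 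This is precisely why the paper first establishes (A2) for ${}`\Ext$, then proves (A1), and only afterwards identifies ${}`\Ext$ with $\Ext$ via the effaceability argument of Step 4 of Proposition \ref{ref-4.1-37}; (A2) for genuine $\Ext$ is read off at the very end. You already have every ingredient needed for this (your part (3) contains the effaceability comparison, whose effacing epimorphisms come from the already-proved (A1)), so the repair is purely a matter of reordering: conclude ${}`\Ext^i_\Dscr(O(-m),M)=0$, prove (A1), run the ${}`\Ext=\Ext$ comparison, and then deduce (A2). With that reordering your argument coincides with the paper's.
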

\begin{proof}
Property (2) follows immediately from (A1) and (1).
To prove (1) we note that since the $O(n)$ are flat we have
\begin{equation}
\label{ref-4.5-45}
\Ext^i_{\Dscr}(O(n),M)=\Ext^i_{\Cscr}(O(n)/O(n)J,M)
\end{equation}
for $M\in\Cscr$. From this 
 it is easy to see that if $O(n)_n$
is strongly ample in $\Cscr$ then $(O(n)/O(n)J)_n$ is strongly ample in
$\Cscr$. So our main task is to prove the converse. 

So assume that $(O(n)/O(n)J)_n$ is strongly ample. We will first show
that $O(n)_n$ is strongly ample if we replace $\Ext$ by $`\Ext$ in the
definition of strongly ample.

Let $M$ be a noetherian object in $\Dscr$. Put
$S=R/J\oplus J/J^2\oplus\cdots$. According to Lemma \ref{ref-2.2.2-3}
$N=M/MJ\oplus MJ/MJ^2\oplus\cdots$ is a noetherian graded $(S,\Cscr)$-object.

We claim that for $m$ large we have
\begin{equation}
\label{ref-4.6-46}
\Ext^i_{\Dscr}(O(-m),MJ^n/MJ^{n+1})=0 \qquad\text{for all $n$} 
\end{equation}
This is again proved using a similar inductive method as in the proof of 
Lemma \ref{ref-2.3.2-17} and Proposition \ref{ref-4.1-37} (Step 2). We leave the proof to the reader.

In particular we find that for $m$ large one has
\[
\Ext^i_{\Dscr}(O(-m),M/MJ^n)=0 \qquad\text{for all $n$}
\]
Taking inverse limits and using Proposition \ref{ref-2.5.3-22}
we find $`\Ext^i_{\Dscr}(O(-m),M)=0$ for all
$i>0$. Hence this proves (A2) for $`\Ext$.

We now prove (A1) (which does not involve any $\Ext$). We first find
$m_0$ such that \eqref{ref-4.6-46} holds for $i=1$ and $m\ge m_0$.
Using (A1) for $\Cscr$ we find that there is an epimorphism
$F\overset{\text{def}}{=}\bigoplus_{i=1}^tO(-n_i)\r M/ MJ$ with
$n_i\ge m$.  We may lift this map to a compatible series of maps $F\r
M/MJ^n$. Taking the inverse limit yields a map $F\r M$. By Nakayama's
lemma (see Lemma \ref{ref-2.2.13-14}) it follows that this must be an
epimorphism.

So we have proved ampleness with $`\Ext$ replacing $\Ext$. Now we
claim that in fact $`\Ext=\Ext$ in $\Dscr$. This is proved in the same
way as Step 4 of the proof of Proposition \ref{ref-4.1-37} (using condition (A1) which was
already proved). 

Property (3) follows from Proposition \ref{ref-2.5.3-22}.
\end{proof}
\section{Lifting and base change}
\label{seclb}
The usual lifting results for infinitesimal deformations  generalize
without difficulty to formal deformations.  As usual $R$ is a $J$-adic noetherian ring
and we assume that $\Dscr$ is an $R$-deformation of a noetherian $\Ext$-finite $R/J$-linear
flat abelian category $\Cscr$. To simplify the notation we assume $\Cscr=\Dscr_{R/J}$. 
\begin{proposition}
\label{ref-5.1-47}
  Let $M\in \Cscr$ be a flat object such that
  $\Ext^i_\Cscr(M,M\otimes_{R/J} J^n/J^{n+1})=0$ for $i=1,2$ and $n\ge 1$.
  Then there exists a unique  $R$-flat object (up to non-unique
  isomorphism) $\overline{M}\in \Dscr$  such that $\overline{M}/\overline{M}J\cong M$.
\end{proposition}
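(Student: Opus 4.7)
The plan is to lift $M$ one step at a time through the tower $\Dscr_n := \Dscr_{R/J^n}$ and then assemble the pieces using completeness of $\Dscr$. The inductive base is $M_1 := M \in \Cscr = \Dscr_1$. Suppose inductively that an $R/J^n$-flat object $M_n \in \Dscr_n$ has been produced together with isomorphisms $M_n \otimes_{R/J^n} R/J^k \cong M_k$ for $k \leq n$. The surjection $R/J^{n+1} \twoheadrightarrow R/J^n$ has square-zero kernel $J^n/J^{n+1}$, which is an $R/J$-module, so the deformation theory of objects in abelian categories developed in \cite{lowenvdb1,lowenvdb2} applies: the obstruction to lifting $M_n$ to an $R/J^{n+1}$-flat object $M_{n+1} \in \Dscr_{n+1}$ with $M_{n+1}/M_{n+1}J^n \cong M_n$ lies in $\Ext^2_\Cscr(M, M\otimes_{R/J} J^n/J^{n+1})$, and the set of isomorphism classes of such lifts forms a torsor under $\Ext^1_\Cscr(M, M\otimes_{R/J} J^n/J^{n+1})$. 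Both groups vanish by hypothesis, so $M_{n+1}$ exists and is unique up to (non-canonical) isomorphism.

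Having constructed the compatible tower $(M_n)_n$, I view it as a $J$-adic inverse system in $\Dscr$ and apply Proposition \ref{ref-2.2.6-8} to get an object of $\widehat{\Dscr}$; since $\Dscr$ is complete by assumption, this yields $\overline{M} \in \Dscr$ with $\overline{M}/\overline{M}J^n \cong M_n$ for all $n$. In particular $\overline{M}/\overline{M}J \cong M$, and because each $\overline{M}/\overline{M}J^n = M_n$ is $R/J^n$-flat, the flatness criterion of \S\ref{ref-2.4-18} (flat in $\widehat{\Cscr}$ iff flat at each finite level) forces $\overline{M}$ to be $R$-flat.

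For uniqueness, suppose $\overline{M}'$ is another $R$-flat lift of $M$. Its reductions $M'_n := \overline{M}'/\overline{M}'J^n$ form a second tower of $R/J^n$-flat lifts, so by the inductive uniqueness there exist isomorphisms $\phi_n : M_n \xrightarrow{\sim} M'_n$. The ambiguity in arranging $\phi_{n+1}$ to reduce to $\phi_n$ is measured, by the same small-extension machinery, by $\Ext^1_\Cscr(M, M\otimes_{R/J} J^n/J^{n+1}) = 0$, so the $\phi_n$ can be modified inductively into a compatible system; passing to the inverse limit yields $\overline{M} \cong \overline{M}'$.

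The main obstacle is the obstruction-theoretic step itself, which is not proved in this paper but imported from \cite{lowenvdb1,lowenvdb2}: one must know that, for a square-zero extension of the ground ring with kernel $I$, lifts of a flat object $M$ across the corresponding extension of abelian categories are controlled by $\Ext^i_\Cscr(M, M\otimes_{R/J} I)$ in the standard way ($i=2$ for existence, $i=1$ for uniqueness up to isomorphism). Given that input, the rest of the argument is routine: induct, take the limit using completeness of $\Dscr$, and verify flatness level by level.
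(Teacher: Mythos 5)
Your proposal is correct and follows essentially the same route as the paper, whose entire proof is the one-line remark that the statement ``follows in a straightforward way from the infinitesimal lifting criterion for objects'' citing \cite{lowen4}; you have simply written out that straightforward derivation (inductive lifting along the square-zero extensions $R/J^{n+1}\twoheadrightarrow R/J^n$ controlled by $\Ext^2$ and $\Ext^1$, then passage to the limit via Proposition \ref{ref-2.2.6-8} and completeness, with flatness checked levelwise). The only cosmetic difference is that the obstruction-theoretic input should be attributed to \cite[Theorem A]{lowen4} (in dual form) rather than to \cite{lowenvdb1,lowenvdb2}.
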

\begin{proof} This follows in a straightforward way from the infinitesimal lifting
criterion for objects (see \cite[Theorem A]{lowen4} for the dual version).
\end{proof}
\begin{proposition}
\label{ref-5.2-48}
Let $\overline{M},\overline{N}\in \Dscr$ be flat objects and put
$\overline{M}/\overline{M}J= M$, $\overline{N}/\overline{N}J= N$. Assume that for all $X$
in $\mod(R/J)$ we have $\Ext^i_\Cscr(M,N\otimes_{R/J} X)=0$ for a certain $i>0$. Then
we  have $`\Ext^i_\Dscr(\overline{M},\overline{N}\otimes_R X)=0$ for all $X\in \mod(R)$. 
\end{proposition}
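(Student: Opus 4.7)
My plan is to reduce the formal vanishing to vanishing at each finite level $\Dscr_k := \Dscr_{R/J^k}$, and then prove the finite-level statement by induction on $k$, using $R$-flatness of $\overline{N}$ to unwind tensor products and formula \eqref{ref-2.10-27} to perform change of rings on $\Ext$.

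First, since $\overline{M}$ is $R$-flat, apply formula \eqref{new-6} of Proposition \ref{ref-2.5.3-22} to get
\[
`\Ext^i_\Dscr(\overline{M},\overline{N}\otimes_R X)=\invlim_k \Ext^i_{\Dscr_k}\bigl(\overline{M}_k,(\overline{N}\otimes_R X)/(\overline{N}\otimes_R X)J^k\bigr),
\]
where $\overline{M}_k:=\overline{M}/\overline{M}J^k$. Using associativity of $\otimes$ together with the $R$-flatness of $\overline{N}$, the second argument simplifies to $\overline{N}_k\otimes_{R/J^k}X_k$ with $X_k=X/XJ^k\in\mod(R/J^k)$. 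It therefore suffices to establish the infinitesimal claim
\[
P(k):\quad \Ext^i_{\Dscr_k}(\overline{M}_k,\overline{N}_k\otimes_{R/J^k} Y)=0\quad\text{for every }Y\in\mod(R/J^k).
\]

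I would prove $P(k)$ by induction. The base case $P(1)$ is the given hypothesis. For $k\ge 2$, consider the short exact sequence $0\to YJ^{k-1}\to Y\to Y/YJ^{k-1}\to 0$ in $\mod(R/J^k)$, whose outer terms live in $\mod(R/J)$ and $\mod(R/J^{k-1})$ respectively. Tensoring with $\overline{N}_k$ is exact by $R/J^k$-flatness of $\overline{N}_k$, and each outer term collapses, giving
\[
0\to N\otimes_{R/J} YJ^{k-1}\to \overline{N}_k\otimes_{R/J^k} Y\to \overline{N}_{k-1}\otimes_{R/J^{k-1}} Y/YJ^{k-1}\to 0.
\]
Apply $\Ext^\bullet_{\Dscr_k}(\overline{M}_k,-)$ to obtain a long exact sequence. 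To dispose of the outer Ext-groups, invoke the change-of-rings identity: for any $Z\in\Dscr_l$ with $l\le k$, applying \eqref{ref-2.10-27} at levels $k$ and $l$ to the $R$-flat object $\overline{M}$ yields
\[
\Ext^i_{\Dscr_k}(\overline{M}_k,Z)=\Ext^i_{\Pro(\Dscr_t)}(\overline{M},Z)=\Ext^i_{\Dscr_l}(\overline{M}_l,Z).
\]
Taking $l=1$ and $Z=N\otimes_{R/J} YJ^{k-1}\in\Cscr$ turns the leftmost Ext into $\Ext^i_\Cscr(M,N\otimes_{R/J} YJ^{k-1})=0$ by hypothesis; taking $l=k-1$ and $Z=\overline{N}_{k-1}\otimes_{R/J^{k-1}} Y/YJ^{k-1}\in\Dscr_{k-1}$ turns the rightmost Ext into zero by $P(k-1)$. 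The long exact sequence then forces the middle term to vanish, closing the induction; taking the inverse limit gives the proposition.

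The only real obstacle is the bookkeeping around change-of-rings: one must verify that formula \eqref{ref-2.10-27} applies at each intermediate level, which requires the stronger flatness of $\overline{M}$ over $R$ (not merely over $R/J^k$) and the correct $J$-annihilation of the target. Beyond that careful tracking, the argument is the standard infinitesimal-to-formal transfer mediated by Proposition \ref{ref-2.5.3-22}.
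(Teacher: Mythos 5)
Your argument is correct and is essentially the route the paper takes: the paper's entire proof is a citation of the infinitesimal vanishing result \cite[Prop.\ 6.13]{lowenvdb1} for the finite-level statement (your $P(k)$), with the passage to the limit via Proposition \ref{ref-2.5.3-22} left implicit, whereas you reprove the infinitesimal step directly by d\'evissage along $0\to YJ^{k-1}\to Y\to Y/YJ^{k-1}\to 0$ together with the change-of-rings identity \eqref{ref-2.10-27}. Nothing in your bookkeeping fails: the hypotheses needed for \eqref{new-6} and \eqref{ref-2.10-27} (formal flatness of $\Dscr$, $\Ext$-finiteness of $\Cscr$, $R$-flatness of $\overline{M}$ and $\overline{N}$) are all part of the standing setup of \S\ref{seclb}.
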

\begin{proof} This follows in a straightforward way from \cite[Prop.\ 6.13]{lowenvdb1}.
\end{proof}
\begin{proposition}
\label{ref-5.3-49}
Let $\overline{M},\overline{N}\in \Dscr$ be flat objects and put
$\overline{M}/\overline{M}J= M$, $\overline{N}/\overline{N}J=
N$. Assume that for all $X$ in $\mod(R/J)$ we have
$\Ext^1_\Cscr(M,N\otimes_{R/J} X)=0$. Then $\Hom_\Dscr(\overline{M},\overline{N})$ is
$R$-flat and furthermore 
for all $X$ in $\mod(R)$ we
have $\Hom_\Dscr(\overline{M},\overline{N}\otimes_R
X)=\Hom_\Dscr(\overline{M},\overline{N})\otimes_R X$.
\end{proposition}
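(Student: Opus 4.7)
The plan is to reduce to Proposition \ref{ref-5.2-48} and then conclude by a short diagram chase. First, I would apply Proposition \ref{ref-5.2-48} to the given vanishing hypothesis, obtaining $`\Ext^1_\Dscr(\overline{M},\overline{N}\otimes_R X)=0$ for every $X\in\mod(R)$.

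Next I would show that the functor $F(X):=\Hom_\Dscr(\overline{M},\overline{N}\otimes_R X)$ is exact on $\mod(R)$. Given a short exact sequence $0\to A\to B\to C\to 0$ in $\mod(R)$, flatness of $\overline{N}$ gives a short exact sequence $0\to\overline{N}\otimes_R A\to\overline{N}\otimes_R B\to\overline{N}\otimes_R C\to 0$ in $\Dscr$, which remains exact in $\Pro(\Dscr_t)$ since $\Dscr=\widehat{\Dscr}$ sits as a full abelian subcategory of $\Pro(\Dscr_t)$. The long exact sequence for $\Ext^\bullet_{\Pro(\Dscr_t)}(\overline{M},-)=`\Ext^\bullet_\Dscr(\overline{M},-)$, together with the $`\Ext^1$-vanishing just obtained, then yields exactness of $F$. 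Note that $\Hom_\Dscr=`\Hom_\Dscr$ automatically, since $\widehat{\Dscr}\hookrightarrow\Pro(\Dscr_t)$ is fully faithful.

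To finish, set $P:=\Hom_\Dscr(\overline{M},\overline{N})=F(R)$ and $G(X):=P\otimes_R X$. The canonical natural transformation $\eta\colon G\to F$ is an isomorphism whenever $X$ is a finite free $R$-module, because both sides commute with finite direct sums. For arbitrary $X\in\mod(R)$ I would pick a finite presentation $R^m\to R^n\to X\to 0$ (using noetherianity of $R$) and apply $\eta$ to obtain a commutative diagram with two right-exact rows (the top by right-exactness of $\otimes$, the bottom by exactness of $F$) in which the two leftmost verticals are isomorphisms; the 5-lemma then gives that $\eta_X$ is an isomorphism. This establishes the base change formula $\Hom_\Dscr(\overline{M},\overline{N}\otimes_R X)=\Hom_\Dscr(\overline{M},\overline{N})\otimes_R X$; combined with exactness of $F$, it shows that $P\otimes_R -$ is exact, i.e.\ $P$ is $R$-flat.

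The only substantive input is Proposition \ref{ref-5.2-48}; the remainder is routine homological algebra. The one point to remember is that $`\Ext$ is computed in $\Pro(\Dscr_t)$ rather than in $\Dscr$ itself, so short exact sequences in $\Dscr$ must be transported to $\Pro(\Dscr_t)$ in order to invoke the long exact $`\Ext$-sequence --- which is immediate since $\Dscr=\widehat{\Dscr}$ embeds as a full abelian subcategory of $\Pro(\Dscr_t)$.
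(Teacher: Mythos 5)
Your proposal is correct and follows essentially the same route as the paper: Proposition \ref{ref-5.2-48} is the sole substantive input, killing the obstruction $\Ext^1_\Dscr(\overline{M},\overline{N}\otimes_R-)$, after which the base-change isomorphism and flatness of $\Hom_\Dscr(\overline{M},\overline{N})$ follow by comparing $\Hom_\Dscr(\overline{M},\overline{N})\otimes_R-$ with $\Hom_\Dscr(\overline{M},\overline{N}\otimes_R-)$ over a presentation by projectives. The only (inessential) difference is organizational: the paper runs a single diagram chase on $0\to Y\to P\to X\to 0$, bootstrapping from ``$\alpha_X$ epi for all $X$'' to ``$\alpha_X$ iso'', while you first prove exactness of the functor $F$ and then apply the five lemma to a free presentation.
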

\begin{proof} This is routine. Choose a short exact sequence
\[
0\r Y\r P\r X\r 0
\]
with $P$ a finitely generate projective. We then get an exact sequence
\[
0\r \Hom_\Dscr(\overline{M},\overline{N}\otimes_R Y)\r \Hom_\Dscr(\overline{M},\overline{N}\otimes_R P)\r \Hom_\Dscr(\overline{M},\overline{N}\otimes_R X)\r
\Ext^1_\Dscr(\overline{M},\overline{N}\otimes_R Y)
\]
By the Proposition \ref{ref-5.2-48} and the hypotheses we get $\Ext^1_\Dscr(\overline{M},\overline{N}\otimes_R Y)=0$.
We then obtain a commutative diagram
\[
\xymatrix{
  0\ar[r]& \Hom_\Dscr(\overline{M},\overline{N}\otimes_R Y)\ar[r]& \Hom_\Dscr(\overline{M},\overline{N}\otimes_R P)\ar[r] & \Hom_\Dscr(\overline{M},\overline{N}\otimes_R X)\ar[r]& 0\\
  & \Hom_\Dscr(\overline{M},\overline{N})\otimes_R Y\ar[u]^{\alpha_Y}\ar[r]& \Hom_\Dscr(\overline{M},\overline{N})\otimes_R P\ar@{=}[u]\ar[r]
  & \Hom_\Dscr(\overline{M},\overline{N})\otimes_R X\ar[r]\ar[u]_{\alpha_X}& 0 }
\]
We obtain that $\alpha_X$ is an epimorphism for all $X$. But then $\alpha_Y$ is an epimorphism
from which we then deduce that $\alpha_X$ is an isomorphism. But then $\alpha_Y$
is an isomorphism and hence the lower right exact sequence is in fact exact. Thus
$\Hom_R(\overline{M},\overline{N})$ is $R$-flat, finishing the proof.  
\end{proof}

\begin{thebibliography}{10}

\bibitem{SGA41}
M.~Artin, A.~Grothendieck, and J.~L. Verdier, {\em Th\'eorie des topos et
  cohomologie \'etale des sch\'emas, {SGA4}, {T}ome 2}, Lecture Notes in
  Mathematics, vol. 269, Springer Verlag, 1972.

\bibitem{AZ}
M.~Artin and J.~Zhang, {\em Noncommutative projective schemes}, Adv. in Math.
  {\bf 109} (1994), no.~2, 228--287.

\bibitem{AZ2}
\bysame, {\em {Abstract Hilbert schemes. I.}}, Algebr. Represent. Theory {\bf
  4} (2001), no.~4, 305--394 (English).

\bibitem{Behrend1}
K.~A. Behrend, {\em Derived {$l$}-adic categories for algebraic stacks}, Mem.
  Amer. Math. Soc. {\bf 163} (2003), no.~774, viii+93.

\bibitem{BBD}
A.~Beilinson, J.~Bernstein, and P.~Deligne, {\em Faisceaux pervers},
  Ast{\'e}risque, vol. 100, Soc. Math. France, 1983.

\bibitem{LDeD}
O.~De~Deken and W.~Lowen, {\em Abelian and derived deformations in the presence
  of {Z}-generating geometric helices}, arXiv:1001.4265.

\bibitem{Ek}
T.~Ekedahl, {\em On the adic formalism}, The Grothendieck Festschrift, vol.~2,
  Birkh\"auser, 1990, pp.~197--218.

\bibitem{EGA31}
A.~Grothendieck, {\em \'{E}l\'ements de g\'eom\'etrie alg\'ebrique. {III}.
  \'{E}tude cohomologique des faisceaux coh\'erents. {I}}, Inst. Hautes
  \'Etudes Sci. Publ. Math. (1961), no.~11, 167.

\bibitem{HMS}
D.~Huybrechts, E.~Macri, and P.~Stellari, {\em Formal deformations and their
  categorical general fibre}, arXiv:0809.3201.

\bibitem{Joua}
J.~P. Jouanolou, {\em Syst\`emes projectifs {$J$}-adiques}, Cohomologie
  {$l$}-adique et fonctions {$L$}, {SGA5} (Berlin), Lecture notes in
  mathematics, vol. 589, Springer Verlag, Berlin, 1977.

\bibitem{lowenvdb2}
W.~Lowen and M.~Van~den Bergh, {\em Hochschild cohomology of abelian categories
  and ringed spaces}, Adv. Math. {\bf 198} (2005), no.~1, 172--221.

\bibitem{lowenvdb1}
\bysame, {\em Deformation theory of abelian categories}, Trans. Amer. Math.
  Soc. {\bf 358} (2006), no.~12, 5441--5483.

\bibitem{lowen4}
W.~Lowen, {\em Obstruction theory for objects in abelian and derived
  categories}, Comm. Algebra {\bf 33} (2005), no.~9, 3195--3223.

\bibitem{Polishchuk1}
A.~Polishchuk, {\em Noncommutative proj and coherent algebras}, Math. Res.
  Lett. {\bf 12} (2005), no.~1, 63--74.

\bibitem{VdBSt}
J.~T. Stafford and M.~Van~den Bergh, {\em Noncommutative curves and
  noncommutative surfaces}, Bull. Amer. Math. Soc. (N.S.) {\bf 38} (2001),
  no.~2, 171--216.

\bibitem{VdB28}
M.~Van~den Bergh, {\em Non-commutative {H}irzebruch surfaces},
  arXiv:math/0102005.

\bibitem{VdB26}
\bysame, {\em Non-commutative quadrics}, arXiv:0807.375.

\end{thebibliography}
\def\cprime{$'$} \def\cprime{$'$} \def\cprime{$'$}
\ifx\undefined\bysame
\newcommand{\bysame}{\leavevmode\hbox to3em{\hrulefill}\,}
\fi

\end{document}